\documentclass[12pt,a4paper]{amsart}
\usepackage{amssymb}
\usepackage[all,cmtip]{xy}

\textwidth=36pc
\calclayout

\pagestyle{plain}
\raggedbottom

\emergencystretch=2em

\newcommand{\+}{\protect\nobreakdash-}
\renewcommand{\:}{\colon}

\newcommand{\rarrow}{\longrightarrow}

\newcommand{\ot}{\otimes}

\newcommand{\lrarrow}{\mskip.5\thinmuskip\relbar\joinrel\relbar\joinrel
 \rightarrow\mskip.5\thinmuskip\relax}

\newcommand{\bu}{{\text{\smaller\smaller$\scriptstyle\bullet$}}}

\DeclareMathOperator{\Hom}{Hom}
\DeclareMathOperator{\Ext}{Ext}
\DeclareMathOperator{\Tor}{Tor}
\DeclareMathOperator{\Spec}{Spec}

\newcommand{\Modl}{{\operatorname{\mathsf{--Mod}}}}

\newcommand{\Fil}{\mathsf{Fil}}
\newcommand{\inj}{\mathsf{inj}}

\newcommand{\fl}{\mathsf{fl}}
\newcommand{\vfl}{\mathsf{vfl}}
\renewcommand{\cot}{\mathsf{cot}}
\newcommand{\cta}{\mathsf{cta}}

\newcommand{\ac}{\mathbf{ac}}
\newcommand{\coac}{\mathbf{coac}}
\newcommand{\ctrac}{\mathbf{ctrac}}
\newcommand{\hin}{\mathbf{hin}}
\newcommand{\hfl}{\mathbf{hfl}}
\newcommand{\bfl}{\mathbf{fl}}
\newcommand{\bvfl}{\mathbf{vfl}}

\newcommand{\R}{\mathcal R}

\newcommand{\bC}{\mathbf C}
\newcommand{\bH}{\mathbf H}

\newcommand{\sA}{\mathsf A}
\newcommand{\sB}{\mathsf B} 
\newcommand{\sC}{\mathsf C}
\newcommand{\sD}{\mathsf D}
\newcommand{\sE}{\mathsf E}
\newcommand{\sF}{\mathsf F}
\newcommand{\sK}{\mathsf K}
\newcommand{\sL}{\mathsf L}
\newcommand{\sS}{\mathsf S}
\newcommand{\sT}{\mathsf T}

\newcommand{\boZ}{\mathbb Z}

\newcommand{\Section}[1]{\bigskip\section{#1}\medskip}
\setcounter{tocdepth}{1}

\theoremstyle{plain}
\newtheorem{thm}{Theorem}[section]
\newtheorem{lem}[thm]{Lemma}
\newtheorem{prop}[thm]{Proposition}
\newtheorem{cor}[thm]{Corollary}
\theoremstyle{definition}
\newtheorem{ex}[thm]{Example}

\newtheorem{rem}[thm]{Remark}

\begin{document}

\title{Local, colocal, and antilocal properties of \\ modules
and complexes over commutative rings}

\author{Leonid Positselski}

\address{Institute of Mathematics, Czech Academy of Sciences \\
\v Zitn\'a~25, 115~67 Praha~1 \\ Czech Republic}

\email{positselski@math.cas.cz}

\begin{abstract}
 This paper is a commutative algebra introduction to the homological
theory of quasi-coherent sheaves and contraherent cosheaves over
quasi-compact semi-separated schemes.
 Antilocality is an alternative way in which global properties are
locally controlled in a finite affine open covering.
 For example, injectivity of modules over non-Noetherian commutative
rings is not preserved by localizations, while homotopy injectivity
of complexes of modules is not preserved by localizations even for
Noetherian rings.
 The latter also applies to the contraadjustedness and cotorsion
properties.
 All the mentioned properties of modules or complexes over commutative
rings are actually antilocal.
 They are also colocal, if one presumes contraadjustedness.
 Generally, if the left class in a (hereditary complete) cotorsion
theory for modules or complexes of modules over commutative rings
is local and preserved by direct images with respect to affine
open immersions, then the right class is antilocal.
 If the right class in a cotorsion theory for contraadjusted modules
or complexes of contraadjusted modules is colocal and preserved by
such direct images, then the left class is antilocal.
 As further examples, the class of flat contraadjusted modules
is antilocal, and so are the classes of acyclic, Becker-coacyclic,
or Becker-contraacyclic complexes of contraadjusted modules.
 The same applies to the classes of homotopy flat complexes of flat
contraadjusted modules and acyclic complexes of flat contraadjusted
modules with flat modules of cocycles.
\end{abstract}

\maketitle

\tableofcontents

\section*{Introduction}
\medskip

\subsection{{}}
 Relations between local and global properties are a fundamental
aspect of geometry.
 In the context of coherent and quasi-coherent sheaves over schemes,
the passage from the global to the local is expressed by
the localization functors in the language of commutative algebra.
 For Zariski open coverings, the localization basically means
inverting an element~$s$ in a commutative ring~$R$.
 So to an $R$\+module $M$ one assigns the $R[s^{-1}]$\+module
$M[s^{-1}]=R[s^{-1}]\ot_RM$.

 For example, flatness is a local property of modules over commutative
rings, as one can easily see.
 The \emph{very flatness}~\cite{Pcosh,ST,PSl} is also a local property
with respect to Zariski open coverings.
 A difficult theorem of Raynaud and Gruson~\cite[\S\,II.3.1]{RG},
\cite{Pe} tells that projectivity of modules is a local property.
 Acyclicity and \emph{coacyclicity}~\cite{EP,Bec} of a complex of
modules are Zariski local properties.

 On the other hand, injectivity of modules is a local property over
Noetherian rings~\cite[Section~II.7]{Hart}, but \emph{not} in general.
 The homotopy injectivity of complexes of (injective or arbitrary)
modules is \emph{not} a local property even over Noetherian
rings~\cite[Example~6.5]{Neem-bb}.
 So, obviously, not all the interesting properties of modules or
complexes of modules over commutative rings are local.

\subsection{{}}
 It can be argued that one is not supposed to localize injective
modules; rather, one should \emph{colocalize} them instead.
 Given a commutative ring $R$, an element $s\in R$, and
an $R$\+module $M$, the \emph{colocalization} of $M$ is
the $R[s^{-1}]$\+module $\Hom_R(R[s^{-1}],M)$.
 At least, the colocalization (unlike the localization) does preserve
injectivity of modules and homotopy injectivity of complexes quite
generally.

 Still, if one is interested in nonaffine schemes, then how exactly
one is supposed to use the colocalization?
 The restriction of a quasi-coherent sheaf to an open subscheme is
described algebraically in terms of the localization functors (as
we already mentioned above).
 The colocalization may make sense as an algebraic procedure, but what
does it mean geometrically?

 The answer is that, alongside with the quasi-coherent sheaves,
there is a different kind of global module gadgets over schemes,
called the \emph{contraherent cosheaves}~\cite{Pcosh}.
 In contraherent cosheaves, the restriction to an open subscheme
is expressed algebraically by the colocalization functors.
 So, one possible answer to the questions above is that, if one
wants to work globally with colocal properties, then one should
consider contraherent cosheaves rather than quasi-coherent sheaves.

 But what if one is interested specifically in injective quasi-coherent
sheaves over non-Noetherian schemes, or in homotopy injective complexes
of quasi-coherent sheaves?
 Is there any way to control such global properties locally?
 In this paper we offer an answer to this question.
 In addition to being well-behaved with respect to the colocalizations,
such properties as injectivity of modules or homotopy injectivity of
complexes also enjoy a completely different kind local-global principle
with respect to Zariski open coverings.
 We say that these properties are \emph{antilocal}.

\subsection{{}} \label{introd-cotorsion-pairs-formulations}
 A great variety of classes of modules and complexes arise in
connection with \emph{cotorsion pairs}, and it is such classes that
we consider in this paper.
 The proofs of our main results are based on a self-dual explicit
elementary construction of complete cotorsion pairs, avoiding the use
of the small object argument.
 In this respect, the present paper is a commutative algebra version
of~\cite{Pctrl}.
 While the constructions of complete cotorsion pairs in~\cite{Pctrl}
were inspired by the ones in~\cite{Psemi}, the constructions of
cotorsion pairs in the present paper are taken
from~\cite[Chapter~4]{Pcosh}.

 Let us describe our setting and the main results in some more detail.
 We consider a class of commutative rings $\R$ closed under
passages to localizations with respect to elements; so $R\in\R$ and
$s\in R$ implies $R[s^{-1}]\in\R$.
 For example, one can take $\R$ to be the class of all commutative rings
or the class of Noetherian commutative rings.
 Then we consider systems of classes of modules $(\sE_R)_{R\in\R}$,
where $\sE_R\subset R\Modl$ is a class of $R$\+modules given for
every ring $R\in\R$.
 We also consider similar systems of classes of complexes
$(\sE_R)_{R\in\R}$, where $\sE_R\subset\bC(R\Modl)$ is a class of
complexes of $R$\+modules given for every $R\in\R$.

 To increase generality, we work with cotorsion pairs
in \emph{exact subcategories} of the abelian categories of modules
or complexes.
 So we suppose given, for every ring $R\in\R$, a full subcategory of
modules $\sE_R\subset R\Modl$ or a full subcategory of complexes
$\sE_R\subset\bC(R\Modl)$ closed under extensions and direct summands
in the abelian exact structure of $R\Modl$ or $\bC(R\Modl)$.
 We assume that, as the ring $R\in\R$ varies, the property of
a module or complex to belong to $\sE_R$ is local with respect to
Zariski open coverings, and moreover, that it is \emph{very local}.
 The latter condition means, in addition to the locality, that
the property is preserved by the restrictions of scalars (direct images)
with respect to the localization morphisms $R\rarrow R[s^{-1}]$.

 Then we consider a system of \emph{hereditary complete} cotorsion
pairs $(\sA_R,\sB_R)$ in $\sE_R$ given for every ring $R\in\R$.
 In this context, we prove that if the system of classes $\sA$ is
very local, then the system of classes $\sB$ is antilocal.
 If the class $\sB$ is antilocal, then the class $\sA$ is local.
 Dually, assume that the system of classes $\sE$ is \emph{very colocal}.
 Under this assumption, if the system of classes $\sB$ is very colocal,
then the system of classes $\sA$ is antilocal.
 If the class $\sA$ is antilocal, then the class $\sB$ is colocal.

\subsection{{}}
 There is, however, one \emph{caveat} that one should always keep in
mind when speaking of the colocality of classes or colocal properties.
 The localization functors $M\longmapsto M[s^{-1}]=R[s^{-1}]\ot_RM$ are
exact.
 In other words, the $R$\+module $R[s^{-1}]$ is flat (in fact, it is
\emph{very flat} in the sense of~\cite{Pcosh,ST,PSl}).
 However, the colocalization functor $M\longmapsto\Hom_R(R[s^{-1}],M)$
is \emph{not} exact on the category of arbitrary $R$\+modules.
 In other words, the $R$\+module $R[s^{-1}]$ is usually \emph{not}
projective.

 Furthermore, the colocalization functors are not ``jointly faithful''
for a principal affine open covering.
 So, if $s_1$,~\dots, $s_d\in R$ is a collection of elements generating
the unit ideal in $R$, then $M[s_j^{-1}]=0$ for all $1\le j\le d$
for some $M\in R\Modl$ implies $M=0$.
 Unless the class of modules under consideration is suitably restricted,
the similar assertion does \emph{not} hold for the colocalizations.
 The suitable restriction is that, for the purposes of colocalization,
one should only consider \emph{contraadjusted} $R$\+modules
in the sense of~\cite{Pcosh,ST,Pcta,PSl}, i.~e., $R$\+modules $P$ such
that $\Ext^1_R(R[s^{-1}],P)=0$ for all $s\in R$.
 So, the assertions about colocal classes stated above at the end
of Section~\ref{introd-cotorsion-pairs-formulations} presume that
all modules in the class $\sE$ are contraadjusted (or all complexes
in the class $\sE$ are complexes of contraadjusted modules).

\subsection{{}}
 The time has come to explain what the \emph{antilocality} means.
 We say that a system of classes $\sF_R\subset R\Modl$ or
$\sF_R\subset\bC(R\Modl)$ given for all rings $R\in\R$ is
\emph{antilocal} if, for any ring $R\in\R$ and a finite collection of
elements $s_1$,~\dots, $s_d\in R$ generating the unit ideal in $R$,
the following condition holds:
 An $R$\+module or a complex of $R$\+modules $M$ belongs to $\sF_R$
if and only if $M$ is a direct summand of a module/complex admitting
a finite filtration with the successive quotients belonging
to $\sF_{R[s_j^{-1}]}$, where $1\le j\le d$.
 It is understood here that any $R[s_j^{-1}]$\+module can be considered
as an $R$\+module via the restriction of scalars (the direct image).

 To introduce and discuss this condition, which appears naturally from
the self-dual construction of complete cotorsion pairs for an affine
open covering in~\cite[Chapter~4]{Pcosh}, is the main aim of this paper.
 Many examples of antilocal properties or classes of modules and
complexes over commutative rings are presented in the paper.

 The constructions of cotorsion pairs in~\cite[Chapter~4]{Pcosh} apply
to quasi-compact semi-separated schemes, but we only consider modules
over rings in this paper.
 The reason for this choice of the generality level is that we want
to make the self-duality of our concepts and constructions explicit.
 Our main results, as stated at the end
of Section~\ref{introd-cotorsion-pairs-formulations}, apply both to
the local and colocal classes in a self-dual fashion.
 If one wants to globalize over schemes, one observes that
the assertions involving the locality and localizations are applicable
to \emph{quasi-coherent sheaves}, while the claims concerning
the colocality and colocalizations turn into results about
\emph{contraherent cosheaves}.

 The reader can find the relevant constructions of complete cotorsion
pairs in the category of quasi-coherent sheaves over a quasi-compact
semi-separated scheme in~\cite[Section~4.1]{Pcosh}, while the dual
constructions of complete cotorsion pairs in the categories of
contraherent cosheaves are spelled out
in~\cite[Sections~4.2\+-4.3]{Pcosh}.
 In order to avoid going into technical details about the contraherent
cosheaves and related concepts, thus making the exposition more
transparent and accessible, we chose to restrict ourselves to modules
and complexes over rings in this paper.

 Let us emphasize that all the discussions of locality, colocality,
and antilocality in this paper presume the Zariski topology on affine
schemes.
 All the mentions of descent and codescent, etc., refer to Zariski
descent and codescent.
 Similarly, the ``direct image'' refers to direct images under open
immersions of affine schemes.

\subsection*{Acknowledgement}
 I~am grateful to Silvana Bazzoni, Michal Hrbek,
Jan \v St\!'ov\'\i\v cek, and Jan Trlifaj for helpful discussions.
 The author is supported by the GA\v CR project 20-13778S and
research plan RVO:~67985840.

\Section{Preliminaries on Cotorsion Pairs in Exact Categories}
\label{preliminaries-cotorsion-pairs-secn}

 Let $\sE$ be an exact category (in Quillen's sense).
 We suggest the survey paper~\cite{Bueh} as the standard reference on
exact categories.
 A discussion of the Yoneda Ext functor in exact categories can be
found in~\cite[Sections~A.7\+-A.8]{Partin}.

 Let $\sA$ and $\sB\subset\sE$ be two classes of objects.
 One denotes by $\sA^{\perp_1}\subset\sE$ the class of all objects
$X\in\sE$ such that $\Ext_\sE^1(A,X)=0$ for all $A\in\sA$.
 Dually, ${}^{\perp_1}\sB\subset\sE$ is the class of all objects
$Y\in\sE$ such that $\Ext_\sE^1(Y,B)=0$ for all $B\in\sB$.

 A pair of classes of objects $(\sA,\sB)$ in $\sE$ is called
a \emph{cotorsion pair}~\cite{Sal} if $\sA^{\perp_1}=\sB$ and
${}^{\perp_1}\sB=\sA$.
 A cotorsion pair $(\sA,\sB)$ is said to be \emph{generated by}
a class of objects $\sS\subset\sE$ if $\sB=\sS^{\perp_1}$.
 Dually, a cotorsion pair $(\sA,\sB)$ is said to be \emph{cogenerated
by} a class of objects $\sT\subset\sE$ if $\sA={}^{\perp_1}\sT$.
 Clearly, any class of objects generates a cotorsion pair and
cogenerates another cotorsion pair in~$\sE$.

 A class of objects $\sA\subset\sE$ is said to be \emph{generating}
if for every object $E\in\sE$ there exists an admissible epimorphism
$A\rarrow E$ in $\sE$ with $A\in\sA$.
 Dually, a class of objects $\sB\subset\sE$ is said to be
\emph{cogenerating} if for every object $E\in\sE$ there exists
an admissible monomorphism $E\rarrow B$ with $B\in\sB$.

 Let $(\sA,\sB)$ be a cotorsion pair in~$\sE$.
 Assume that the class $\sA$ is generating and the class $\sB$ is
cogenerating in~$\sE$.
 (The former assumption holds automatically when
there are enough projective objects in $\sE$, and dually, the latter
assumption always holds if there are enough injective objects
in~$\sE$.)
 Under these assumptions, the following four conditions are
equivalent~\cite[Theorem~1.2.10]{GR}, \cite[Lemma~6.17]{Sto-ICRA},
\cite[Lemma~1.4]{PS4}:
\begin{enumerate}
\renewcommand{\theenumi}{\roman{enumi}}
\item the class $\sA$ is closed under the passages to the kernels of
admissible epimorphisms in~$\sE$;
\item the class $\sB$ is closed under the passages to the cokernels of
admissible monomorphisms in~$\sE$;
\item $\Ext_\sE^2(A,B)=0$ for all $A\in\sA$ and $B\in\sB$;
\item $\Ext_\sE^n(A,B)=0$ for all $A\in\sA$, \,$B\in\sB$, and $n\ge1$.
\end{enumerate}
 A cotorsion pair satisfying these conditions is said to be
\emph{hereditary}.

 Given two classes of objects $\sA$ and $\sB\subset\sE$, one denotes
by $\sA^{\perp_{\ge1}}\subset\sE$ the class of all objects
$X\in\sE$ such that $\Ext_\sE^n(A,X)=0$ for all $A\in\sA$ and $n\ge1$,
and by ${}^{\perp_{\ge1}}\sB\subset\sE$ is the class of all objects
$Y\in\sE$ such that $\Ext_\sE^n(Y,B)=0$ for all $B\in\sB$.
 So a cotorsion pair $(\sA,\sB)$ in $\sE$ (with a generating class
$\sA$ and a cogenerating class~$\sB$) is hereditary if and only if
$\sB=\sA^{\perp_{\ge1}}$, or equivalently, $\sA={}^{\perp_{\ge1}}\sB$.

\begin{lem} \label{hereditary-lemma}
\textup{(a)} Let\/ $\sS\subset\sE$ be a generating class of objects
closed under the kernels of admissible epimorphisms in\/~$\sE$.
 Then\/ $\sS^{\perp_1}=\sS^{\perp_{\ge1}}\subset\sE$, and the class\/
$\sB=\sS^{\perp_1}$ is closed under the cokernels of admissible
monomorphisms in\/~$\sE$.
 If the class\/ $\sB$ is cogenerating in\/ $\sE$, then the cotorsion
pair $(\sA,\sB)$ generated by\/ $\sS$ is hereditary in\/~$\sE$. \par
\textup{(b)} Let\/ $\sT\subset\sE$ be a cogenerating class of objects
closed under the cokernels of admissible monomorphisms in\/~$\sE$.
 Then\/ ${}^{\perp_1}\sT={}^{\perp_{\ge1}}\sT\subset\sE$, and
the class\/ $\sA={}^{\perp_1}\sT$ is closed under the kernels of
admissible epimorphisms in\/~$\sE$.
 If the class\/ $\sA$ is generating in\/ $\sE$, then the cotorsion
pair $(\sA,\sB)$ cogenerated by\/ $\sT$ is hereditary in\/~$\sE$.
\end{lem}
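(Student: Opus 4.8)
The plan is to prove part~(a) in full and to obtain part~(b) by passing to the opposite exact category $\sE^{\mathsf{op}}$. So let $\sS\subset\sE$ be generating and closed under kernels of admissible epimorphisms, put $\sB=\sS^{\perp_1}$ and $\sA={}^{\perp_1}\sB$. Since $\Ext^1_\sE(S,B)=0$ for all $S\in\sS$ and $B\in\sB$, one has $\sS\subset\sA$, so $\sA$ is generating (being larger than a generating class), and $(\sA,\sB)$ is the cotorsion pair generated by~$\sS$.

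The crux of the argument is a \emph{resolution of $1$-extension classes}: for every object $Z\in\sE$, every $S\in\sS$, and every class $\eta\in\Ext^1_\sE(S,Z)$, there exist $L\in\sS$ and a morphism $g\colon L\rarrow Z$ with $\eta\in\im\bigl(g_*\colon\Ext^1_\sE(S,L)\rarrow\Ext^1_\sE(S,Z)\bigr)$. To obtain this, represent $\eta$ by an admissible short exact sequence $0\rarrow Z\rarrow Y\rarrow S\rarrow0$ and choose, using that $\sS$ is generating, an admissible epimorphism $\pi\colon P\rarrow Y$ with $P\in\sS$. Then the composite $P\rarrow Y\rarrow S$ is an admissible epimorphism, so its kernel $L$ and the kernel $K$ of $\pi$ both lie in $\sS$; moreover $K\subset L$ is an admissible subobject, and $\pi$ identifies $L/K$ with $\ker(Y\rarrow S)=Z$. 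Factoring the admissible short exact sequence $0\rarrow L\rarrow P\rarrow S\rarrow0$ out by $K$ therefore yields (an isomorphic copy of) $0\rarrow Z\rarrow Y\rarrow S\rarrow0$, which says exactly that $\eta=g_*(\bar\eta)$, where $\bar\eta\in\Ext^1_\sE(S,L)$ is the class of $0\rarrow L\rarrow P\rarrow S\rarrow0$ and $g\colon L\rarrow Z$ is the admissible epimorphism with kernel~$K$.

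Granting this, I claim $\Ext^n_\sE(S,X)=0$ for all $n\ge1$, $S\in\sS$, and $X\in\sS^{\perp_1}$, and prove it by induction on~$n$. The case $n=1$ is the definition of $\sS^{\perp_1}$. For $n\ge2$, any class in $\Ext^n_\sE(S,X)$ is obtained by splicing a $1$-extension $\eta\in\Ext^1_\sE(S,Z)$ with an $(n-1)$-extension $\zeta\in\Ext^{n-1}_\sE(Z,X)$, for a suitable object~$Z$. Replacing $\eta$ by $g_*(\bar\eta)$ and moving $g$ across the splice (functoriality of Yoneda composition in the middle term) exhibits the same class as the splice of $\bar\eta\in\Ext^1_\sE(S,L)$ with $g^*(\zeta)\in\Ext^{n-1}_\sE(L,X)$, and the latter group vanishes by the inductive hypothesis since $L\in\sS$. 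Hence $\sS^{\perp_1}=\sS^{\perp_{\ge1}}$. Consequently, for any admissible short exact sequence $0\rarrow B'\rarrow B\rarrow B''\rarrow0$ with $B',B\in\sB$ and any $S\in\sS$, the associated long exact sequence gives $\Ext^n_\sE(S,B'')=0$ for all $n\ge1$ (using $\Ext^n_\sE(S,B)=0=\Ext^{n+1}_\sE(S,B')$), so $B''\in\sB$; thus $\sB$ is closed under cokernels of admissible monomorphisms. If in addition $\sB$ is cogenerating, then the cotorsion pair $(\sA,\sB)$ has generating left class, cogenerating right class, and satisfies condition~(ii), hence it is hereditary by the equivalence (i)--(iv) recalled before the lemma. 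Finally, part~(b) follows by applying part~(a) to $\sE^{\mathsf{op}}$ with $\sT$ in the role of~$\sS$, since all the relevant notions (generating versus cogenerating, kernels of admissible epimorphisms versus cokernels of admissible monomorphisms, the Yoneda Ext functor, heredity) are interchanged under this duality.

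I expect the resolution step, together with the bookkeeping of spliced extensions in a general exact category, to be the main difficulty. The naive dimension shift — resolving $S$ through objects of $\sS$ and shifting degrees in the long exact sequence — is circular, since one has no control over $\Ext^{\ge2}_\sE(P,X)$ for $P\in\sS$; the point is instead to shift \emph{inside} a given extension, replacing its syzygy $Z$ by an object $L\in\sS$, and the generating and kernel-closure hypotheses are precisely what makes this possible. The verifications that factoring $0\rarrow L\rarrow P\rarrow S\rarrow0$ out by $K$ reproduces the original extension, and that Yoneda splicing is functorial in the middle term, are routine manipulations with the exact-category axioms (composites of admissible epimorphisms are admissible epimorphisms; an admissible subobject of the kernel yields a Noether isomorphism $P/K\cong Y$; pushout of an admissible monomorphism along an admissible epimorphism) and with the Yoneda Ext functor, but they are where care is needed.
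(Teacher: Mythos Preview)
Your argument is correct and is essentially the standard dimension-shifting argument that the paper points to (it merely cites \cite[Lemma~6.17]{Sto-ICRA} and \cite[Lemma~1.3]{BHP} for the first assertion of each part and says the rest follows).  One small slip: you assert that the kernel $K$ of $\pi\colon P\to Y$ lies in~$\sS$, but under the intended reading of ``closed under kernels of admissible epimorphisms'' (namely: if $0\to S'\to S\to S''\to 0$ is admissible with $S,S''\in\sS$ then $S'\in\sS$) this is not justified, since $Y$ need not belong to~$\sS$.  Fortunately you never use $K\in\sS$---only that $K$ is an admissible subobject of $L$ (which it is, as the kernel of the admissible epimorphism $L\twoheadrightarrow Z$ obtained from the factorization of $\pi$), and that $L\in\sS$ (which does follow, since $L=\ker(P\to S)$ with $P,S\in\sS$).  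With that clause deleted, the proof stands.
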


\begin{proof}
 This is a slight generalization of the lemma about the equivalence
of conditions~(i\+-iv) above.
 Parts~(a) and~(b) are dual to each other; and all the claims follow
easily from the very first assertions of~(a) and~(b).
 The latter are provable by the argument
from~\cite[Lemma~6.17]{Sto-ICRA}; see also~\cite[Lemma~1.3]{BHP}.
\end{proof}

\begin{lem} \label{generated-by-projdim1-lemma}
 In an exact category\/ $\sE$ with enough projective and injective
objects, any cotorsion pair $(\sA,\sB)$ generated by a class of objects
of projective dimension~$\le1$ is hereditary.
 Moreover, the class\/ $\sB$ is closed under admissible epimorphic
images in the exact category\/ $\sE$ in this case, i.~e.,
if $B\rarrow E$ is admissible epimorphism and $B\in\sB$,
then $E\in\sB$; while all objects from the class $\sA$ have
projective dimension~$\le1$ in\/~$\sE$.  \qed
\end{lem}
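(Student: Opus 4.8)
\textit{The plan.}
I would prove the three assertions in the order: (i)~$\sB$ is closed under admissible epimorphic images; (ii)~every object of $\sA$ has projective dimension $\le1$ in $\sE$; (iii)~the cotorsion pair is hereditary. Here (i) carries the one real idea, and (ii), (iii) are formal consequences of (i) together with the standing hypotheses on~$\sE$.

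Fix a class $\sS\subseteq\sE$ with $\pd_\sE S\le1$ for all $S\in\sS$ and with $\sB=\sS^{\perp_1}$, so that $\sA={}^{\perp_1}\sB$. Everything hinges on the remark that $\pd_\sE S\le1$ is equivalent to $\Ext^n_\sE(S,{-})=0$ for all $n\ge2$. For~(i): let $B\rarrow E$ be an admissible epimorphism with $B\in\sB$, and let $0\rarrow K\rarrow B\rarrow E\rarrow0$ be the associated admissible short exact sequence. For each $S\in\sS$, the long exact sequence of the groups $\Ext^n_\sE(S,{-})$ contains the fragment $\Ext^1_\sE(S,B)\rarrow\Ext^1_\sE(S,E)\rarrow\Ext^2_\sE(S,K)$; the left term vanishes since $B\in\sS^{\perp_1}$, the right term vanishes since $\pd_\sE S\le1$, hence $\Ext^1_\sE(S,E)=0$. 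As $S\in\sS$ was arbitrary, $E\in\sS^{\perp_1}=\sB$. (One cannot simply quote Lemma~\ref{hereditary-lemma}, as $\sS$ need not be closed under kernels of admissible epimorphisms and enlarging it to make it so would change $\sS^{\perp_1}$.)

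For~(ii): take $A\in\sA$ and an arbitrary object $X\in\sE$. Since $\sE$ has enough injectives, embed $X$ by an admissible monomorphism into an injective object $I$ and set $C=\coker(X\rarrow I)$, giving an admissible short exact sequence $0\rarrow X\rarrow I\rarrow C\rarrow0$. Here $I\in\sB$ (injectives lie in $\sS^{\perp_1}$), and $I\rarrow C$ is an admissible epimorphism, so~(i) yields $C\in\sB$. The long exact sequence of the groups $\Ext^n_\sE(A,{-})$, together with $\Ext^1_\sE(A,I)=0=\Ext^2_\sE(A,I)$, then gives $\Ext^2_\sE(A,X)\cong\Ext^1_\sE(A,C)$, and the right-hand side vanishes because $A\in{}^{\perp_1}\sB$ and $C\in\sB$. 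As $X$ was arbitrary, $\Ext^2_\sE(A,{-})=0$, i.e.\ $\pd_\sE A\le1$. I would stress that such projective-dimension bounds for the left class are usually obtained via Eklof's lemma and so require the pair to be generated by a \emph{set}; the closure of $\sB$ under admissible epimorphic images is exactly what lets us bypass transfinite filtrations here.

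For~(iii): $\sA$ contains all projective objects and $\sB$ all injective objects of $\sE$, so, $\sE$ having enough of each, $\sA$ is generating and $\sB$ is cogenerating. By~(ii) we have $\Ext^n_\sE(A,B)=0$ for all $A\in\sA$, $B\in\sB$, and $n\ge2$, while $\Ext^1_\sE(A,B)=0$ by the defining property of the cotorsion pair; hence $\sB=\sA^{\perp_{\ge1}}$, and the pair is hereditary by the criterion recalled above. I do not expect any serious obstacle: the entire content sits in~(i), which is short, and~(ii)--(iii) follow formally once~(i) and the hypotheses on $\sE$ are available.
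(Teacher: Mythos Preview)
Your proof is correct and follows the natural line of argument; since the paper leaves this lemma without proof (it ends with \qed), there is no detailed approach to compare against. Your observation that Lemma~\ref{hereditary-lemma} does not apply directly, and that closure of $\sB$ under admissible epimorphic images is the real content, is on the mark. One very minor point: when you conclude $\pd_\sE A\le1$ from $\Ext^2_\sE(A,{-})=0$, you are implicitly using that in an exact category with enough injectives this suffices (dimension shifting gives $\Ext^{n+1}_\sE(A,X)\cong\Ext^n_\sE(A,C)$ for $n\ge1$, so $\Ext^2=0$ propagates upward); you might make that half-line explicit, but it is entirely standard.
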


 A cotorsion pair $(\sA,\sB)$ in $\sE$ is said to be
\emph{complete}~\cite{Sal} if, for every object $E\in\sE$, there exist
(admissible) short exact sequences
\begin{gather}
 0\lrarrow B'\lrarrow A\lrarrow E\lrarrow0
 \label{spec-precover-sequence} \\
 0\lrarrow E\lrarrow B\lrarrow A'\lrarrow0
 \label{spec-preenvelope-sequence}
\end{gather}
in $\sE$ with objects $A$, $A'\in\sA$ and $B$, $B'\in\sB$.
 A short exact sequence~\eqref{spec-precover-sequence} is called
a \emph{special precover sequence}, and a short exact
sequence~\eqref{spec-preenvelope-sequence} is called a \emph{special
preenvelope sequence}.
 The short exact sequences~(\ref{spec-precover-sequence}\+-%
\ref{spec-preenvelope-sequence}) are collectively referred to as
the \emph{approximation sequences}.

\begin{lem} \label{salce-lemma}
 Let $(\sA,\sB)$ be a cotorsion pair in\/ $\sE$ such that the class\/
$\sA$ is generating and the class\/ $\sB$ is cogenerating in\/~$\sE$.
 Then the pair of classes $(\sA,\sB)$ admits special precover sequences
for all objects $E\in\sE$ if and only if it admits special preenvelope
sequences for all $E\in\sE$.
\end{lem}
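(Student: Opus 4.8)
The plan is to recognize this as Salce's lemma~\cite{Sal}, transported to the setting of exact categories, and to prove the two implications separately, each being dual to the other; so it suffices to show that the existence of special preenvelope sequences~\eqref{spec-preenvelope-sequence} for all objects of\/ $\sE$ implies the existence of special precover sequences~\eqref{spec-precover-sequence} for all objects. First I would record the standard fact that, in any cotorsion pair, the class $\sA={}^{\perp_1}\sB$ is closed under extensions and the class $\sB=\sA^{\perp_1}$ is closed under extensions in\/~$\sE$; both follow immediately from the long exact sequence of the functor $\Ext_\sE$ applied to a single short exact sequence.

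For the implication, let $E\in\sE$ be an arbitrary object. Since $\sA$ is generating, choose an admissible epimorphism $A\rarrow E$ with $A\in\sA$, and let $0\rarrow K\rarrow A\rarrow E\rarrow0$ be the associated short exact sequence. Applying the hypothesis to the object $K$, obtain a special preenvelope sequence $0\rarrow K\rarrow B'\rarrow A'\rarrow0$ with $B'\in\sB$ and $A'\in\sA$. Now form the pushout $D$ of the admissible monomorphism $K\rarrow B'$ along the admissible monomorphism $K\rarrow A$. In the resulting commutative diagram with exact rows and columns, the morphism $B'\rarrow D$ is an admissible monomorphism whose cokernel is $E$ (the cokernel of $K\rarrow A$), yielding a short exact sequence $0\rarrow B'\rarrow D\rarrow E\rarrow0$; while the morphism $A\rarrow D$ is an admissible monomorphism whose cokernel is $A'$ (the cokernel of $K\rarrow B'$), yielding a short exact sequence $0\rarrow A\rarrow D\rarrow A'\rarrow0$. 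Since both $A$ and $A'$ lie in $\sA$ and the class $\sA$ is closed under extensions, we conclude that $D\in\sA$; hence $0\rarrow B'\rarrow D\rarrow E\rarrow0$ is a special precover sequence for $E$.

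The converse is proved by the dual argument: given special precover sequences for all objects, pick (using that $\sB$ is cogenerating) an admissible monomorphism $E\rarrow B$ with $B\in\sB$, apply the hypothesis to its cokernel, and take the pullback of the resulting special precover sequence along the epimorphism $B\rarrow\coker(E\rarrow B)$; the object so obtained lies in $\sB$ because $\sB$ is closed under extensions, and this produces a special preenvelope sequence for $E$. The one point requiring care here --- the main obstacle, such as it is --- is the handling of pushouts along admissible monomorphisms, and dually pullbacks along admissible epimorphisms, in an exact category: one must invoke the standard facts that such pushouts exist, that in a pushout square the edge opposite to an admissible monomorphism is again an admissible monomorphism, and that the two parallel cokernels in the square agree. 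All of these are available in~\cite{Bueh}, and with them in hand the argument is purely diagrammatic.
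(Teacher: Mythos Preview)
Your argument is correct and is precisely the classical Salce argument that the paper has in mind: the paper's proof consists only of the sentence ``This is a category-theoretic version of the Salce lemmas~\cite{Sal}; the argument from~\cite[Lemma~1.1]{PS4} applies,'' and what you have written is exactly that argument spelled out. Indeed, the paper later reproduces the dual half of your construction verbatim (the pullback diagram~\eqref{salce-pullback-diagram}) inside the proof of Proposition~\ref{locality-antilocality-main-prop}, so your write-up matches both the cited source and the paper's own later use.
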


\begin{proof}
 This is a category-theoretic version of the Salce lemmas~\cite{Sal}.
 The argument from~\cite[Lemma~1.1]{PS4} applies.
\end{proof}

 Given a class of objects $\sC\subset\sE$, let us denote by
$\sC^\oplus\subset\sE$ the class of all direct summands of objects
from $\sC$ in~$\sE$.

\begin{lem} \label{direct-summand-closure-is-complete-cotorsion}
 Let\/ $\sA$ and\/ $\sB\subset\sE$ be two classes of objects in
an exact category\/ $\sE$ such that\/ $\Ext^1_\sA(A,B)=0$ for all
$A\in\sA$ and $B\in\sB$.
 Assume that (admissible) short exact
sequences~\textup{(\ref{spec-precover-sequence}\+-%
\ref{spec-preenvelope-sequence})} with $A$, $A'\in\sA$ and
$B$, $B'\in\sB$ exist for all objects $E\in\sE$.
 Then $(\sA^\oplus,\sB^\oplus)$ is a complete cotorsion pair
in\/~$\sE$.
 In other words, $\sA^{\perp_1}=\sB^\oplus$ and\/ ${}^{\perp_1}\sB=
\sA^\oplus$.
\end{lem}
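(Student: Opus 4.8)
The plan is to prove the two displayed equalities $\sA^{\perp_1}=\sB^\oplus$ and ${}^{\perp_1}\sB=\sA^\oplus$; the completeness assertion will then follow almost immediately. These two equalities are interchanged by passing to the opposite exact category $\sE^{\mathsf{op}}$ and swapping the roles of $\sA$ and~$\sB$: the hypothesis $\Ext^1_\sE(A,B)=0$ is symmetric in $(\sA,\sB)$, and the hypothesis that both approximation sequences~\eqref{spec-precover-sequence}--\eqref{spec-preenvelope-sequence} exist for every object is self-dual (the two sequences simply get swapped). So I would prove $\sA^{\perp_1}=\sB^\oplus$ in detail and obtain ${}^{\perp_1}\sB=\sA^\oplus$ by this duality.

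For the inclusion $\sB^\oplus\subset\sA^{\perp_1}$: if $A\in\sA$ and $X$ is a direct summand of some $B\in\sB$, then $\Ext^1_\sE(A,X)$ is a direct summand of $\Ext^1_\sE(A,B)=0$ and hence vanishes, so $X\in\sA^{\perp_1}$. For the reverse inclusion $\sA^{\perp_1}\subset\sB^\oplus$, I would take an arbitrary $X\in\sA^{\perp_1}$ and invoke the hypothesis to produce a special preenvelope sequence~\eqref{spec-preenvelope-sequence}, that is, an admissible short exact sequence $0\rarrow X\rarrow B\rarrow A'\rarrow 0$ in $\sE$ with $B\in\sB$ and $A'\in\sA$. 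Since $A'\in\sA$ while $X\in\sA^{\perp_1}$, we have $\Ext^1_\sE(A',X)=0$, and therefore this admissible short exact sequence splits (the vanishing of this Yoneda $\Ext^1$ group is precisely the obstruction to splitting in an exact category; see~\cite{Bueh}). Thus $X$ is a direct summand of $B\in\sB$, that is, $X\in\sB^\oplus$. Dually, using the special precover sequence~\eqref{spec-precover-sequence} in place of the preenvelope one, ${}^{\perp_1}\sB=\sA^\oplus$.

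To conclude, I would check that $(\sA^\oplus,\sB^\oplus)$ is a complete cotorsion pair. The same direct-summand bookkeeping as above, now applied in the first argument of $\Ext^1$, gives $(\sA^\oplus)^{\perp_1}=\sA^{\perp_1}$ and ${}^{\perp_1}(\sB^\oplus)={}^{\perp_1}\sB$; combining this with the two equalities just proved yields $(\sA^\oplus)^{\perp_1}=\sB^\oplus$ and ${}^{\perp_1}(\sB^\oplus)=\sA^\oplus$, which is exactly the statement that $(\sA^\oplus,\sB^\oplus)$ is a cotorsion pair. For completeness, the approximation sequences~\eqref{spec-precover-sequence}--\eqref{spec-preenvelope-sequence} supplied by the hypothesis for an arbitrary $E\in\sE$ have their $\sA$-terms in $\sA\subset\sA^\oplus$ and their $\sB$-terms in $\sB\subset\sB^\oplus$, so they already serve as special precover and special preenvelope sequences for the pair $(\sA^\oplus,\sB^\oplus)$.

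I do not expect a genuine obstacle here: the argument repackages two elementary facts, namely that $\Ext^1$ in an exact category commutes with finite biproducts in each variable (hence respects passage to direct summands), and that an admissible short exact sequence whose Yoneda $\Ext^1$ class vanishes is split. The one point requiring attention is matching each approximation sequence with the correct orthogonality condition — the preenvelope sequence must be tested against objects of $\sA^{\perp_1}$ and the precover sequence against objects of ${}^{\perp_1}\sB$ — so that in each case it is the hypothesis $\Ext^1_\sE(A,B)=0$ that forces the splitting.
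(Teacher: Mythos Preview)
Your argument is correct and is precisely the standard proof; the paper itself does not spell it out but simply cites \cite[Lemma~1.2]{PS4}, where the same reasoning appears. The only minor imprecision is in your closing remark: in the preenvelope step the splitting uses $X\in\sA^{\perp_1}$ (not the hypothesis $\Ext^1_\sE(A,B)=0$ directly), while that hypothesis is what gives the easy inclusion $\sB^\oplus\subset\sA^{\perp_1}$---but you had this right in the main body.
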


\begin{proof}
 See~\cite[Lemma~1.2]{PS4}.
\end{proof}

 Let $\sK$ be an exact category and $\sE\subset\sK$ be a full
additive subcategory.
 One says that a full subcategory $\sE$ \emph{inherits an exact category 
structure} from an ambient exact category $\sK$ if the class of all
(admissible) short exact sequences in $\sK$ with the terms belonging to
$\sE$ defines an exact category structure on~$\sE$.
 We refer to~\cite[Theorem~2.6]{DS} or~\cite[Lemma~4.20]{Pedg} for
a characterization of such full subcategories in exact categories.
 Any full subcategory closed under extensions in an exact category
inherits an exact category structure.
 The same applies to any full additive subcategory that is closed
under \emph{both} the admissible subobjects and admissible
epimorphic images.

 Let $\sK$ be an exact category, $\sE\subset\sK$ be a full subcategory
inheriting an exact category structure, and $(\sA,\sB)$ be a complete
cotorsion pair in~$\sK$.
 One says that a complete cotorsion pair $(\sA,\sB)$ \emph{restricts to}
(\emph{a complete cotorsion pair in}) the subcategory $\sE\subset\sK$
if the pair of classes ($\sE\cap\sA$, $\sE\cap\sB$) is a complete
cotorsion pair in~$\sE$.

\begin{lem} \label{cotorsion-pair-restricts}
 Let\/ $\sK$ be an exact category, $\sE\subset\sK$ be a full
subcategory closed under extensions, and $(\sA,\sB)$ be a complete
cotorsion pair in\/~$\sK$.
 Assume that either \par
\textup{(a)} $\sE$ is closed under kernels of admissible
epimorphisms in\/ $\sK$, and\/ $\sA\subset\sE$; or \par
\textup{(b)} $\sE$ is closed under cokernels of admissible
monomorphisms in\/ $\sK$, and\/ $\sB\subset\sE$. \par
\noindent Then the complete cotorsion pair $(\sA,\sB)$ in\/ $\sK$
restricts to a complete cotorsion pair in\/~$\sE$.
\end{lem}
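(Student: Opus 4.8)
The plan is to reduce part~(b) to part~(a) by passing to the opposite exact category, then to build the two approximation sequences inside~$\sE$ directly and invoke Lemma~\ref{direct-summand-closure-is-complete-cotorsion}. For the reduction, I would observe that replacing $\sK$ by $\sK^{\mathrm{op}}$ interchanges admissible monomorphisms with admissible epimorphisms, converts the complete cotorsion pair $(\sA,\sB)$ in~$\sK$ into the complete cotorsion pair $(\sB,\sA)$ in~$\sK^{\mathrm{op}}$ with the special precover and preenvelope sequences exchanged, and carries the hypotheses of~(b) precisely to those of~(a); so it suffices to treat case~(a). Assuming then that $\sE$ is closed under kernels of admissible epimorphisms in~$\sK$ and $\sA\subset\sE$ (so that $\sE\cap\sA=\sA$), I would first record that, $\sE$ being extension\+closed, it inherits an exact structure and the Yoneda groups satisfy $\Ext^1_\sE(X,Y)=\Ext^1_\sK(X,Y)$ for all $X,Y\in\sE$ (a standard property of extension\+closed subcategories, see~\cite{Bueh}); in particular $\Ext^1_\sE(A,B)=0$ for all $A\in\sA$ and $B\in\sE\cap\sB$.

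Next I would produce the approximation sequences. Given $E\in\sE$, take a special precover sequence $0\to B'\to A\to E\to0$ for $(\sA,\sB)$ in~$\sK$ with $A\in\sA$ and $B'\in\sB$; since $A\in\sA\subset\sE$ and $E\in\sE$, and $B'$ is the kernel of the admissible epimorphism $A\to E$, the closure of~$\sE$ under such kernels gives $B'\in\sE$, hence $B'\in\sE\cap\sB$, and the sequence is admissible in~$\sE$ — a special precover sequence for $(\sA,\sE\cap\sB)$ there. For the other direction one uses a different closure property: take a special preenvelope sequence $0\to E\to B\to A'\to0$ in~$\sK$ with $B\in\sB$ and $A'\in\sA$; here $E\in\sE$ and $A'\in\sA\subset\sE$, so $B$ is an extension in~$\sK$ of two objects of~$\sE$ and therefore lies in~$\sE$, whence $B\in\sE\cap\sB$ and the sequence is a special preenvelope sequence for $(\sA,\sE\cap\sB)$ in~$\sE$. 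With these in hand, Lemma~\ref{direct-summand-closure-is-complete-cotorsion} applied inside the exact category~$\sE$ yields that $\bigl((\sE\cap\sA)^\oplus,(\sE\cap\sB)^\oplus\bigr)$ is a complete cotorsion pair in~$\sE$. Finally, since $\sA={}^{\perp_1}\sB$ and $\sB=\sA^{\perp_1}$ are closed under direct summands in~$\sK$, while a direct summand in~$\sE$ of an object of~$\sE$ is also its direct summand in~$\sK$, the classes $\sE\cap\sA$ and $\sE\cap\sB$ are closed under direct summands in~$\sE$, so that $(\sE\cap\sA)^\oplus=\sE\cap\sA$ and $(\sE\cap\sB)^\oplus=\sE\cap\sB$, and $(\sE\cap\sA,\sE\cap\sB)$ is the required complete cotorsion pair in~$\sE$.

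The argument is essentially bookkeeping, and I do not anticipate a genuine obstacle; the one point that needs care is the asymmetric use of the hypotheses — in case~(a) the special precover sequence stays inside~$\sE$ because~$\sE$ is closed under kernels of admissible epimorphisms, whereas the special preenvelope sequence stays inside~$\sE$ for the unrelated reason that~$\sE$ is extension\+closed (and symmetrically in case~(b)) — together with the verification that the inherited exact structure on~$\sE$ computes the same $\Ext^1$, which is what makes Lemma~\ref{direct-summand-closure-is-complete-cotorsion} applicable.
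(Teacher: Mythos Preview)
Your proof is correct and follows essentially the same route as the paper's: construct both approximation sequences inside~$\sE$ by lifting them from~$\sK$ (using closure under kernels of admissible epimorphisms for the precover and closure under extensions for the preenvelope), then invoke Lemma~\ref{direct-summand-closure-is-complete-cotorsion} together with the observation that $\sA$ and $\sE\cap\sB$ are already closed under direct summands. The paper simply says ``part~(b) is dual'' rather than spelling out the passage to $\sK^{\mathrm{op}}$, but the content is the same.
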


\begin{proof}
 Let us prove part~(a); part~(b) is dual.
 We have to prove that ($\sA$, $\sE\cap\sB$) is a complete cotorsion
pair in~$\sE$.
 For this purpose, let us show that approximation
sequences~(\ref{spec-precover-sequence}\+-%
\ref{spec-preenvelope-sequence}) with objects $A$, $A'\in\sA$
and $B$, $B'\in\sE\cap\sB$ exist in $\sE$ for any object $E\in\sE$.
 Indeed, by assumption, a special precover
sequence~\eqref{spec-precover-sequence} with objects $B'\in\sB$
and $A\in\sA$ exists in $\sK$ for the given object $E\in\sE$.
 Since $E\in\sE$ and $A\in\sA\subset\sE$, and the full subcategory
$\sE$ is closed under kernels of admissible epimorphisms in $\sK$,
it follows that $B'\in\sE\cap\sB$.
 Similarly, a special preenvelope
sequence~\eqref{spec-preenvelope-sequence} with objects $B\in\sB$
and $A'\in\sA$ exists in~$\sK$.
 Since $E\in\sE$ and $A'\in\sA\subset\sE$, and the full subcategory
$\sE$ is closed under extensions in $\sK$, it follows that
$B\in\sE\cap\sB$.

 On the other hand, the classes $\sA$ and $\sE\cap\sB$ are
$\Ext^1$\+orthogonal in $\sE$, since the functors $\Ext^1$ in the exact
categories $\sE$ and $\sK$ agree.
 Furthermore, the class $\sA$ is closed under direct summands in $\sE$,
since it is closed under direct summands in~$\sK$; and the class
$\sE\cap\sB$ is closed under direct summands in $\sE$, since the class
$\sB$ is closed under direct summands in~$\sK$.
 It remains to apply
Lemma~\ref{direct-summand-closure-is-complete-cotorsion}.
\end{proof}

\begin{lem} \label{restricted-cotorsion-hereditary}
 Let\/ $\sK$ be an exact category, $\sE\subset\sK$ be a full
subcategory inheriting an exact category structure, and $(\sA,\sB)$ be
a complete cotorsion pair in\/ $\sK$ that restricts to a complete
cotorsion pair in\/~$\sE$.
 Assume that the cotorsion pair $(\sA,\sB)$ is hereditary in\/~$\sK$.
 Then the cotorsion pair $(\sE\cap\sA$, $\sE\cap\sB)$ is
hereditary in\/~$\sE$.
\end{lem}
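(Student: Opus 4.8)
The plan is to verify condition~(i) from the list of equivalent conditions preceding Lemma~\ref{hereditary-lemma} for the restricted cotorsion pair $(\sE\cap\sA,\,\sE\cap\sB)$ in the exact category~$\sE$, and then invoke the equivalence (i)--(iv). To apply that equivalence one needs the class $\sE\cap\sA$ to be generating and the class $\sE\cap\sB$ to be cogenerating in~$\sE$; both follow at once from the hypothesis that $(\sA,\sB)$ restricts to a \emph{complete} cotorsion pair in~$\sE$, since the middle terms of the special precover sequences exhibit $\sE\cap\sA$ as generating in~$\sE$, and the middle terms of the special preenvelope sequences exhibit $\sE\cap\sB$ as cogenerating in~$\sE$.

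Thus it remains to show that $\sE\cap\sA$ is closed under kernels of admissible epimorphisms in~$\sE$. Let $0\rarrow K\rarrow A\rarrow A''\rarrow0$ be an admissible short exact sequence in~$\sE$ with $A$, $A''\in\sE\cap\sA$; I must check that $K\in\sE\cap\sA$. Certainly $K\in\sE$, since it is one of the terms of a short exact sequence in~$\sE$. The key observation is that, since $\sE$ inherits its exact structure from~$\sK$, this sequence is also an admissible short exact sequence in~$\sK$, and here $A$, $A''\in\sA$. As the cotorsion pair $(\sA,\sB)$ is hereditary in~$\sK$, the class $\sA$ is closed under kernels of admissible epimorphisms in~$\sK$ (condition~(i) for $(\sA,\sB)$ in~$\sK$), so $K\in\sA$. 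Hence $K\in\sE\cap\sA$, as desired, and $(\sE\cap\sA,\,\sE\cap\sB)$ is hereditary in~$\sE$ by the equivalence of conditions~(i)--(iv).

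Alternatively one could verify condition~(ii) in exactly the dual manner: for an admissible short exact sequence $0\rarrow B'\rarrow B\rarrow B''\rarrow0$ in~$\sE$ with $B'$, $B\in\sE\cap\sB$, the sequence is admissible exact in~$\sK$ with $B'$, $B\in\sB$, so $B''\in\sB$ by heredity of $(\sA,\sB)$ in~$\sK$, and $B''\in\sE$; therefore $B''\in\sE\cap\sB$. I do not expect any real obstacle in this argument: the only points requiring a little care are that the equivalence (i)--(iv) is being used only for a cotorsion pair whose classes are already known to be generating and cogenerating (which is why completeness of the restricted pair is invoked), and that the phrase ``$\sE$ inherits an exact category structure from~$\sK$'' is exactly what guarantees that an admissible short exact sequence in~$\sE$ is the same as an admissible short exact sequence in~$\sK$ with all three terms in~$\sE$, making the relevant kernels and cokernels agree.
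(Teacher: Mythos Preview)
Your proof is correct and follows essentially the same approach as the paper's own proof: both first note that completeness of the restricted cotorsion pair supplies the generating and cogenerating conditions, and then verify condition~(i) (closure of $\sE\cap\sA$ under kernels of admissible epimorphisms in~$\sE$) by using that admissible short exact sequences in~$\sE$ are also admissible in~$\sK$ and invoking heredity of $(\sA,\sB)$ in~$\sK$, with the dual argument for~(ii) mentioned as an alternative.
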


\begin{proof}
 The class $\sE\cap\sA$ is generating in $\sE$, and the class
$\sE\cap\sB$ is cogenerating in $\sE$, since the pair of classes
$(\sE\cap\sA$, $\sE\cap\sB)$ is complete cotorsion pair in $\sE$
by assumption.
 Now it remains to observe that if the class $\sA$ is closed under
the kernels of admissible epimorphisms in $\sK$, then the class
$\sE\cap\sA$ is closed under the kernels of admissible epimorphisms
in~$\sE$.
 Alternatively, the dual argument proves that the class $\sE\cap\sB$
is closed under the cokernels of admissible monomorphisms in~$\sE$.
\end{proof}

 The following Ext\+adjunction lemma is helpful for comparing cotorsion
pairs in two exact categories connected by a pair of adjoint functors.
 It should be compared to Lemma~\ref{spectral-ext-adjunction-lemma}
below, which provides some additional explanations.

\begin{lem} \label{categorical-ext-adjunction-lemma}
 Let\/ $\sE$ and\/ $\sF$ be exact categories, and let\/ $\Phi\:\sE
\rarrow\sF$ and\/ $\Psi\:\sF\rarrow\sE$ be a pair of adjoint functors,
with\/ $\Phi$ left adjoint to\/~$\Psi$.
 In this context: \par
\textup{(a)} If the functors\/ $\Phi$ and\/ $\Psi$ are exact (i.~e.,
take admissible short exact sequences to admissible short exact
sequences), then for any two objects $E\in\sE$ and $F\in\sF$ and all
integers $m\ge0$ there is a natural isomorphism of the\/ $\Ext$ groups
\begin{equation} \label{ext-m-adjunction}
 \Ext_\sE^m(E,\Psi(F))\simeq\Ext_\sF^m(\Phi(E),F).
\end{equation} \par
\textup{(b)} If the functor\/ $\Phi$ is exact, then for any two objects
$E\in\sE$ and $F\in\sF$ there is a natural monomorphism of the groups\/
$\Ext^1$,
\begin{equation} \label{ext-1-monomorphism}
 \Ext_\sE^1(E,\Psi(F))\lrarrow\Ext_\sF^1(\Phi(E),F).
\end{equation} \par
\textup{(c)} More generally, if an object $E\in\sE$ has the property
that the functor\/ $\Phi$ takes any (admissible) short exact sequence\/
$0\rarrow E'\rarrow E''\rarrow E\rarrow0$ in\/ $\sE$ to a short exact
sequence in\/ $\sF$, then for any object $F\in\sF$ there is a natural
monomorphism~\eqref{ext-1-monomorphism} of the groups\/ $\Ext^1$. \par
\textup{(d)} Dually, if an object $F\in\sF$ has the property
that the functor\/ $\Psi$ takes any short exact sequence\/
$0\rarrow F\rarrow F''\rarrow F'\rarrow0$ in\/ $\sF$ to a short exact
sequence in\/ $\sE$, then for any object $E\in\sE$ there is a natural
monomorphism of the groups\/ $\Ext^1$ acting in the opposite direction,
\begin{equation} \label{ext-1-dual-monomorphism}
 \Ext_\sF^1(\Phi(E),F)\lrarrow\Ext_\sE^1(E,\Psi(F)).
\end{equation} \par
\textup{(e)} Under the combined assumptions of parts~\textup{(c)}
and~\textup{(d)} concerning objects $E\in\sE$ and $F\in\sF$, there is
a natural isomorphism of the groups\/ $\Ext^1$,
\begin{equation} \label{ext-1-isomorphism}
 \Ext_\sE^1(E,\Psi(F))\simeq\Ext_\sF^1(\Phi(E),F).
\end{equation}
\end{lem}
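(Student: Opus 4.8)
The plan is to treat all five parts uniformly through the Yoneda description of $\Ext$\+groups in exact categories, for which we refer to~\cite[Sections~A.7\+-A.8]{Partin}. Beyond that formalism, the only inputs about the adjunction $\Phi\dashv\Psi$ that will be used are elementary: the natural isomorphism $\Hom_\sF(\Phi(-),-)\simeq\Hom_\sE(-,\Psi(-))$ carries the counit $\varepsilon_F\colon\Phi\Psi(F)\rarrow F$ to the identity morphism of $\Psi(F)$ and the unit $u_E\colon E\rarrow\Psi\Phi(E)$ to the identity morphism of $\Phi(E)$, and the two triangle identities for the unit and counit hold. Throughout, $\xi$ denotes an admissible short exact sequence $0\rarrow\Psi(F)\rarrow M\rarrow E\rarrow0$ in $\sE$ representing a class in $\Ext^1_\sE(E,\Psi(F))$, and $j\colon\Psi(F)\rarrow M$ its admissible monomorphism.

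Parts~(c) and~(d) are the technical heart, and part~(b) is the special case of part~(c) in which the object $E$ is arbitrary, since an exact functor $\Phi$ takes every admissible short exact sequence ending in $E$ to an admissible short exact sequence. To construct the map~\eqref{ext-1-monomorphism} under the hypothesis of part~(c), I would apply $\Phi$ to $\xi$, obtaining (by that hypothesis) an admissible short exact sequence $0\rarrow\Phi\Psi(F)\rarrow\Phi M\rarrow\Phi(E)\rarrow0$ in $\sF$, and then push it out along the counit $\varepsilon_F$; the resulting class in $\Ext^1_\sF(\Phi(E),F)$ is the value of the map on the class of $\xi$. That this assignment is a well-defined homomorphism of abelian groups is routine: it factors as the map induced by $\Phi$ followed by a pushforward in the coefficients. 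The crucial point is injectivity. I would observe that the class just constructed is exactly the image of the counit $\varepsilon_F$ under the connecting homomorphism $\Hom_\sF(\Phi\Psi(F),F)\rarrow\Ext^1_\sF(\Phi(E),F)$ in the long exact sequence obtained by applying $\Hom_\sF(-,F)$ to the short exact sequence $0\rarrow\Phi\Psi(F)\rarrow\Phi M\rarrow\Phi(E)\rarrow0$; hence it vanishes if and only if $\varepsilon_F=g\circ\Phi(j)$ for some $g\colon\Phi M\rarrow F$. By naturality of the adjunction isomorphism in its first argument applied to $j$, under which $\varepsilon_F$ corresponds to the identity morphism of $\Psi(F)$, this occurs precisely when $j$ admits a retraction $M\rarrow\Psi(F)$, i.e., when $\xi$ is split, i.e., when the class of $\xi$ is zero. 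So the map has trivial kernel, and being additive it is a monomorphism. Part~(d) is dual: one applies $\Psi$ to an admissible short exact sequence $0\rarrow F\rarrow N\rarrow\Phi(E)\rarrow0$ (again getting an admissible short exact sequence, now by the hypothesis on $F$), pulls it back along the unit $u_E$ to define the map~\eqref{ext-1-dual-monomorphism}, and the mirror-image computation shows the resulting class vanishes exactly when the admissible epimorphism $N\rarrow\Phi(E)$ is split.

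For part~(e), both maps~\eqref{ext-1-monomorphism} and~\eqref{ext-1-dual-monomorphism} are available, and since each is a monomorphism by parts~(c)--(d) it suffices to show that the composite carrying a class through~\eqref{ext-1-monomorphism} and then through~\eqref{ext-1-dual-monomorphism} is the identity. Writing $\eta\colon0\rarrow F\rarrow N\rarrow\Phi(E)\rarrow0$ for the image of $\xi$ under~\eqref{ext-1-monomorphism}, and noting that $\Psi(\eta)$ is an admissible short exact sequence (the hypothesis on $F$, as $\eta$ begins with $F$), I would exhibit a morphism of admissible short exact sequences from $\xi$ to $\Psi(\eta)$ whose left-hand component is the identity of $\Psi(F)$ and whose right-hand component is the unit $u_E$: its middle component is the morphism $M\rarrow\Psi N$ adjoint to the pushout morphism $\Phi M\rarrow N$, and the commutativity of the two resulting squares follows from naturality of the unit together with the triangle identity $\Psi(\varepsilon_F)\circ u_{\Psi(F)}=\mathrm{id}_{\Psi(F)}$. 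Such a morphism exhibits $\xi$ as the pullback of $\Psi(\eta)$ along $u_E$, which is by definition the image of $\eta$ under~\eqref{ext-1-dual-monomorphism}; hence the composite is the identity, the two maps are mutually inverse isomorphisms, and~\eqref{ext-1-isomorphism} follows.

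Finally, part~(a) with $m=0$ is the given $\Hom$\+adjunction, and for $m\ge1$ one runs the same two constructions on Yoneda $m$\+extensions: since $\Phi$ and $\Psi$ are exact they preserve the exactness of $m$\+extension sequences; one pushes the leftmost admissible monomorphism of such a sequence out along the counit, respectively pulls the rightmost admissible epimorphism back along the unit; and the triangle identities again show these two constructions to be mutually inverse, giving the isomorphism~\eqref{ext-m-adjunction}. I expect the injectivity argument in parts~(c)--(d) — identifying the constructed $\Ext^1$ class with the image of the (co)unit under a connecting homomorphism, so that its vanishing detects splitting — to be the conceptual crux, while the most laborious step is the bookkeeping with morphisms of short (and, for part~(a), $m$\+term) exact sequences needed to see that the constructions of parts~(a) and~(e) are mutually inverse.
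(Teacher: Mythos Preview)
Your proof is correct and, for part~(c)---the only part the paper actually proves---takes essentially the same approach: apply $\Phi$, push out along the counit, and show via adjunction that the pushout splits exactly when the original extension does (your phrasing via the connecting homomorphism is a mild repackaging of the paper's direct factorization argument). The paper leaves parts~(a), (d), and~(e) to the reader, so your treatment of those, particularly the triangle-identity verification that the two maps in part~(e) compose to the identity, supplies detail the paper omits.
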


\begin{proof}
 We will only prove part~(c).
 Given a short exact sequence $0\rarrow\Psi(F)\rarrow E''\rarrow E
\rarrow0$ in $\sE$, the map~\eqref{ext-1-monomorphism} does
the following.
 Applying the functor $\Phi$ and using the assumption of part~(c),
we obtain a short exact sequence $0\rarrow\Phi\Psi(F)\rarrow\Phi(E'')
\rarrow\Phi(E)\rarrow0$ in~$\sF$.
 It remains to take the pushout of the latter short exact sequence
with respect to the adjunction morphism $\Phi\Psi(F)\rarrow F$ in
order to obtain the desired short exact sequence $0\rarrow F\rarrow
F''\rarrow\Phi(E)\rarrow0$ in~$\sF$.
$$
 \xymatrix{
  0\ar[r] & \Phi\Psi(F) \ar[r] \ar[d] & \Phi(E'') \ar[r] \ar@{..>}[ld]
  & \Phi(E) \ar[r] & 0 \\
  & F
 }
$$

 Now suppose that the resulting short exact sequence in $\sF$ splits.
 This means that the adjunction morphism $\Phi\Psi(F)\rarrow F$
factorizes through the admissible monomorphism $\Phi\Psi(F)\rarrow
\Phi(E'')$ in~$\sF$.
 So we obtain a morphism $\Phi(E'')\rarrow F$ in~$\sF$.
 By adjunction, there is the corresponding morphism $E''\rarrow
\Psi(F)$, which splits the original short exact sequence
$0\rarrow\Psi(F)\rarrow E''\rarrow E\rarrow0$ in~$\sE$.
\end{proof}

 For any exact category $\sE$, the additive category $\bC(\sE)$ of
cochain complexes in $\sE$ has a natural exact category structure in
which a short sequence of complexes $0\rarrow A^\bu\rarrow B^\bu
\rarrow C^\bu\rarrow0$ is (admissible) exact in $\bC(\sE)$ if and only
if, at every fixed cohomological degree~$n$, the short sequence
$0\rarrow A^n\rarrow B^n\rarrow C^n\rarrow0$ is exact in~$\sE$.
 If $\sE$ is an abelian exact category with the abelian exact structure,
then $\bC(\sE)$ is also an abelian exact category, and the exact
structure on it defined by the rule above is the abelian
exact structure.

 Given an object $E\in\sE$, we denote by $D_{n,n+1}^\bu(E)\in\bC(\sE)$
the contractible two-term complex $\dotsb\rarrow0\rarrow E
\overset=\rarrow E\rarrow0\rarrow\dotsb$ sitting at the cohomological
degrees $n$ and~$n+1$.
 The following lemma is quite standard.

\begin{lem} \label{disk-complexes-lemma}
 Let\/ $\sE$ be an exact category and $C^\bu\in\bC(\sE)$ be a complex
in\/~$\sE$.
 Then the isomorphisms of Yoneda Ext groups
\begin{align*}
 \Ext^i_{\bC(\sE)}(D_{n,n+1}^\bu(E),C^\bu) &\simeq
 \Ext^i_\sE(E,C^n) \\
 \Ext^i_{\bC(\sE)}(C^\bu,D_{n,n+1}^\bu(E)) &\simeq
 \Ext^i_\sE(C^{n+1},E)
\end{align*}
hold for all $E\in\sE$, \,$n\in\boZ$, and $i\ge0$.
\end{lem}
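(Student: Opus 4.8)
The plan is to deduce both displayed isomorphisms from the $\Ext$\+adjunction Lemma~\ref{categorical-ext-adjunction-lemma}(a), applied to the disk-complex functor $D_{n,n+1}^\bu(-)\colon\sE\rarrow\bC(\sE)$ together with the two cohomological-component functors $\bC(\sE)\rarrow\sE$ sending a complex $C^\bu$ to its terms $C^n$ and~$C^{n+1}$.

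First I would record the two adjunctions that are needed. Unwinding the definition of a morphism of complexes, a map $D_{n,n+1}^\bu(E)\rarrow C^\bu$ in $\bC(\sE)$ is the same as a single morphism $E\rarrow C^n$ in~$\sE$: the component in degree $n+1$ is then forced to be the composition with the differential $d^n_{C^\bu}$, and the only commutation condition that is not vacuous (the one in degrees $n+1$ and $n+2$) holds automatically because $d^{n+1}_{C^\bu}d^n_{C^\bu}=0$. Hence $D_{n,n+1}^\bu(-)$ is left adjoint to the functor $C^\bu\longmapsto C^n$. Dually, a map $C^\bu\rarrow D_{n,n+1}^\bu(E)$ amounts to a morphism $C^{n+1}\rarrow E$ in~$\sE$, so $D_{n,n+1}^\bu(-)$ is right adjoint to the functor $C^\bu\longmapsto C^{n+1}$. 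Next I would check the exactness hypotheses of Lemma~\ref{categorical-ext-adjunction-lemma}(a): the component functors take admissible short exact sequences of complexes to admissible short exact sequences in~$\sE$ by the very definition of the exact structure on $\bC(\sE)$, while $D_{n,n+1}^\bu(-)$ takes an admissible short exact sequence $0\rarrow E'\rarrow E''\rarrow E\rarrow0$ in $\sE$ to the sequence of complexes that coincides with it in the cohomological degrees $n$ and $n+1$ and is the zero sequence in all other degrees, which is admissible exact in $\bC(\sE)$.

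With these preparations, the lemma follows from two applications of the natural isomorphism~\eqref{ext-m-adjunction}. Taking $\Phi=D_{n,n+1}^\bu(-)$ and $\Psi=(C^\bu\longmapsto C^n)$, an exact pair with $\Phi$ left adjoint to $\Psi$, yields $\Ext^i_{\bC(\sE)}(D_{n,n+1}^\bu(E),C^\bu)\simeq\Ext^i_\sE(E,C^n)$ for all $i\ge0$; taking instead $\Phi=(C^\bu\longmapsto C^{n+1})$ and $\Psi=D_{n,n+1}^\bu(-)$, again an exact pair with $\Phi$ left adjoint to $\Psi$, yields $\Ext^i_{\bC(\sE)}(C^\bu,D_{n,n+1}^\bu(E))\simeq\Ext^i_\sE(C^{n+1},E)$. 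I do not expect any genuine obstacle; the only thing requiring a little care is bookkeeping the directions of the two adjunctions and the shift by one in cohomological degree between them. One could instead construct explicit mutually inverse maps on Yoneda $i$\+extension classes, but appealing to Lemma~\ref{categorical-ext-adjunction-lemma}(a) is shorter and sidesteps set-theoretic issues with higher Yoneda $\Ext$.
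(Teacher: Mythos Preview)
Your proof is correct and follows essentially the same approach as the paper: both invoke Lemma~\ref{categorical-ext-adjunction-lemma}(a) for the disk functor $D_{n,n+1}^\bu(-)$ together with its adjoints. The only cosmetic difference is that the paper packages the two component functors $C^\bu\longmapsto C^n$ and $C^\bu\longmapsto C^{n+1}$ into the single forgetful functor $\bC(\sE)\rarrow\sE^\boZ$ to graded objects (and then identifies both adjoints of that functor with disk-complex constructions), whereas you work with each projection separately; your version is slightly more direct and avoids the bookkeeping of graded objects.
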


\begin{proof}
 Extend the construction of the complex $D_{n,n+1}^\bu(E)$ to a pair of
functors adjoint on the left and on the right to the forgetful functor
from $\bC(\sE)$ to the category $\sE^\boZ$ of graded objects in $\sE$,
and apply Lemma~\ref{categorical-ext-adjunction-lemma}(a).
 For the details on the construction of the adjoint functors, see,
e.~g., \cite[Section~5]{PS4}.
\end{proof}

 For any additive category $\sE$, we denote by $\bH(\sE)$
the triangulated category of cochain complexes in $\sE$ with
morphisms up to cochain homotopy.
 The notation $C^\bu\longmapsto C^\bu[n]$ refers to the functors of
shift of cohomological grading on the complexes, $C^\bu[n]^i=C^{n+i}$.

\begin{lem} \label{ext-homotopy-hom-lemma}
 Let\/ $\sE$ be an exact category and $A^\bu$, $B^\bu\in\bC(\sE)$ be
two complexes in\/~$\sE$.
 Then there is a natural injective map of abelian groups
$$
 \Hom_{\bH(\sE)}(A^\bu,B^\bu[1])\lrarrow
 \Ext_{\bC(\sE)}^1(A^\bu,B^\bu),
$$
whose image consists precisely of all the extensions represented
by short exact sequences of complexes\/ $0\rarrow B^\bu\rarrow C^\bu
\rarrow A^\bu\rarrow0$ in which the short exact sequence\/
$0\rarrow B^n\rarrow C^n\rarrow A^n\rarrow0$ is split in\/ $\sE$
for every $n\in\boZ$.
 In particular, if\/ $\Ext^1_\sE(A^n,B^n)=0$ for every $n\in\boZ$,
then
$$
 \Hom_{\bH(\sE)}(A^\bu,B^\bu[1])\simeq
 \Ext_{\bC(\sE)}^1(A^\bu,B^\bu).
$$
\end{lem}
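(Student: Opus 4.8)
The plan is to realize $\Hom_{\bH(\sE)}(A^\bu,B^\bu[1])$ as the subgroup of $\Ext^1_{\bC(\sE)}(A^\bu,B^\bu)$ consisting of the classes of degreewise split extensions, by means of the standard ``twisted differential'' bookkeeping. First I would construct the map. A cochain map $f\:A^\bu\rarrow B^\bu[1]$ amounts to a family of morphisms $f^n\:A^n\rarrow B^{n+1}$ in $\sE$ such that the upper triangular maps $d_{C_f}^n=\left(\begin{smallmatrix} d_B^n & f^n \\ 0 & d_A^n\end{smallmatrix}\right)\:B^n\oplus A^n\rarrow B^{n+1}\oplus A^{n+1}$ satisfy $d_{C_f}^{n+1}d_{C_f}^n=0$; so they are the differentials of a complex $C_f^\bu$ with $C_f^n=B^n\oplus A^n$, and the short exact sequence $0\rarrow B^\bu\rarrow C_f^\bu\rarrow A^\bu\rarrow0$ is split in each degree, hence is an admissible short exact sequence in $\bC(\sE)$ (split short exact sequences being admissible in any exact category). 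Sending $f$ to the class of this extension gives a map $\Hom_{\bC(\sE)}(A^\bu,B^\bu[1])\rarrow\Ext^1_{\bC(\sE)}(A^\bu,B^\bu)$, which a routine matrix computation shows to be additive with respect to the Baer sum and natural in $A^\bu$ and $B^\bu$ (pullbacks and pushouts of $C_f^\bu$ along cochain maps correspond to composing $f$ with those maps). Finally, if $s=(s^n\:A^n\rarrow B^n)$ is a cochain homotopy between $f$ and $f'$, then the graded automorphism of $B^\bu\oplus A^\bu$ with components $\left(\begin{smallmatrix} \mathrm{id} & \pm s^n \\ 0 & \mathrm{id}\end{smallmatrix}\right)$ is, for the appropriate sign, an isomorphism of complexes $C_f^\bu\xrightarrow{\sim}C_{f'}^\bu$ commuting with the embedding of $B^\bu$ and the projection onto $A^\bu$; so the map descends to the desired $\Hom_{\bH(\sE)}(A^\bu,B^\bu[1])\rarrow\Ext^1_{\bC(\sE)}(A^\bu,B^\bu)$.

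Next I would pin down the image. By construction every class in the image is represented by a degreewise split short exact sequence. For the converse, given an admissible short exact sequence of complexes $0\rarrow B^\bu\rarrow C^\bu\rarrow A^\bu\rarrow0$ that is split in every degree, choosing a splitting in each degree produces an isomorphism of graded objects $C^\bu\cong B^\bu\oplus A^\bu$ under which the embedding of $B^\bu$ and the projection onto $A^\bu$ become the standard ones; transporting the differential of $C^\bu$ through this identification, compatibility with these two maps forces it into the upper triangular form $\left(\begin{smallmatrix} d_B^n & f^n \\ 0 & d_A^n\end{smallmatrix}\right)$, while $d^2=0$ forces $f=(f^n)$ to be a cochain map $A^\bu\rarrow B^\bu[1]$. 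Hence the original sequence is isomorphic to $0\rarrow B^\bu\rarrow C_f^\bu\rarrow A^\bu\rarrow0$ and lies in the image of $[f]$. (Here I use the description of Yoneda equivalence of $1$\+extensions: two such extensions are equivalent precisely when there is a morphism between the corresponding short exact sequences restricting to the identity on the outer terms, and any such morphism is automatically an isomorphism by the short five lemma applied degreewise in~$\sE$.)

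It remains to prove injectivity, after which the last assertion is immediate. If $C_f^\bu$ and $C_{f'}^\bu$ represent the same class, there is an isomorphism of complexes $g\:C_f^\bu\rarrow C_{f'}^\bu$ that is the identity on $B^\bu$ and induces the identity on $A^\bu$; compatibility with the embedding of $B^\bu$ and the projection onto $A^\bu$ forces $g^n=\left(\begin{smallmatrix} \mathrm{id} & t^n \\ 0 & \mathrm{id}\end{smallmatrix}\right)$ for some $t^n\:A^n\rarrow B^n$, and the condition that $g$ commutes with the differentials says exactly that $(\mp t^n)$ is a cochain homotopy between $f$ and $f'$; so $[f]=[f']$ in $\Hom_{\bH(\sE)}(A^\bu,B^\bu[1])$. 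This shows the map is an isomorphism onto the subgroup of classes of degreewise split extensions. For the last statement, if $\Ext^1_\sE(A^n,B^n)=0$ for all $n\in\boZ$, then every admissible short exact sequence $0\rarrow B^n\rarrow C^n\rarrow A^n\rarrow0$ in $\sE$ splits, so every admissible short exact sequence of complexes $0\rarrow B^\bu\rarrow C^\bu\rarrow A^\bu\rarrow0$ is degreewise split; hence the map is also surjective, and therefore bijective. I do not anticipate a real obstacle: the entire argument is bookkeeping with the matrix forms of differentials and morphisms — the one thing to watch is keeping the signs consistent with the shift convention on $B^\bu[1]$ — together with the two standard exact\+category facts invoked above, namely the admissibility of split short exact sequences and the short five lemma.
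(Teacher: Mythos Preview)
Your argument is correct and is precisely the standard ``twisted differential/mapping cone'' bookkeeping that the paper has in mind; the paper itself does not spell out a proof but simply refers to \cite[Section~1.3]{Bec} and \cite[Lemma~1.6]{BHP}, where this same computation is carried out. Your caveat about signs is well placed: with the convention $d_{B[1]}=-d_B$, the homotopy condition for maps $A^\bu\to B^\bu[1]$ matches the chain-map condition for $\left(\begin{smallmatrix}\mathrm{id}&-s^n\\0&\mathrm{id}\end{smallmatrix}\right)$, so the ``appropriate sign'' is indeed a minus.
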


\begin{proof}
 This observation is well-known and goes back, at least,
to~\cite[Section~1.3]{Bec}; see~\cite[Lemma~1.6]{BHP} for some details.
\end{proof}

 In the rest of the section we briefly discuss filtrations and
cotorsion pairs in Grothendieck abelian categories with enough
projective objects.
 This is a well-known particular case of a more general theory
developed in the papers~\cite{PR,PS4}; but only this particular case
is needed for the purposes of the present paper.

 Let $\sK$ be a Grothendieck category and $\alpha$~be an ordinal.
 An \emph{$\alpha$\+indexed filtration} on an object $F\in\sK$ is
a family of subobjects $(F_\beta\subset F)_{0\le\beta\le\alpha}$
satisfying the following conditions:
\begin{itemize}
\item $F_0=0$ and $F_\alpha=F$;
\item $F_\gamma\subset F_\beta$ for all $0\le\gamma\le\beta\le\alpha$;
\item $F_\beta=\bigcup_{\gamma<\beta}F_\gamma$ for all limit
ordinals $\beta\le\alpha$.
\end{itemize}
 Given a filtration $(F_\beta)_{0\le\beta\le\alpha}$ on an object
$F\in\sK$, one says that the object $F$ is \emph{filtered by}
the successive quotient objects $S_\beta=F_{\beta+1}/F_\beta$,
\ $0\le\beta<\alpha$.
 In an alternative language, the object $F$ is said to be
a \emph{transfinitely iterated extension} (in the sense of the direct
limit) of the objects $(S_\beta\in\sK)_{0\le\beta<\alpha}$.

 Given a class of objects $\sS\subset\sK$, we denote by $\Fil(\sS)$
the class of all objects filtered by (objects isomorphic to)
the objects from~$\sS$.
 The following result is known classically as
the \emph{Eklof lemma}~\cite[Lemma~1]{ET}.

\begin{lem} \label{eklof-lemma}
 Let\/ $\sK$ be a Grothendieck category and\/ $\sB\subset\sK$ be
a class of objects.
 Then the class of objects\/ ${}^{\perp_1}\sB$ is closed under
transfinitely iterated extensions in\/~$\sK$; so ${}^{\perp_1}\sB
=\Fil({}^{\perp_1}\sB)\subset\sK$.
\end{lem}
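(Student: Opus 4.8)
The plan is to unwind the definition of $\Fil$ directly. Let $F\in\sK$ carry an $\alpha$\+indexed filtration $(F_\beta)_{0\le\beta\le\alpha}$ whose successive quotients $S_\beta=F_{\beta+1}/F_\beta$ all lie in ${}^{\perp_1}\sB$. I want to show that $\Ext^1_\sK(F,B)=0$ for every $B\in\sB$, that is, that every short exact sequence $0\rarrow B\rarrow X\rarrow F\rarrow0$ in~$\sK$ splits; write $\pi\:X\rarrow F$ for the epimorphism. Fix such a sequence and such a~$B$, and set $X_\beta=\pi^{-1}(F_\beta)\subset X$. Then $0\rarrow B\rarrow X_\beta\rarrow F_\beta\rarrow0$ is exact (with epimorphism $\pi_\beta=\pi|_{X_\beta}$), one has $X_0=B$ and $X_\alpha=X$, and $\pi$ induces an isomorphism $X_{\beta+1}/X_\beta\simeq F_{\beta+1}/F_\beta=S_\beta$. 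Since taking preimages under~$\pi$ commutes with the transfinite unions of subobjects involved (a standard property of Grothendieck categories), $(X_\beta)_{0\le\beta\le\alpha}$ is again a filtration; in particular $X_\beta=\bigcup_{\gamma<\beta}X_\gamma$ for every limit ordinal~$\beta$.

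I would then build, by transfinite induction on~$\beta$, a coherent family of sections $s_\beta\:F_\beta\rarrow X_\beta$ of~$\pi_\beta$, where \emph{coherent} means that $s_{\beta'}|_{F_\beta}=s_\beta$ whenever $\beta\le\beta'$; taking $\beta=\alpha$ then yields the desired splitting of~$\pi$. For $\beta=0$, take $s_0=0$. For a limit ordinal~$\beta$, the compatible family $(s_\gamma)_{\gamma<\beta}$ factors uniquely through the direct limit $F_\beta=\bigcup_{\gamma<\beta}F_\gamma$ to give $s_\beta\:F_\beta\rarrow\bigcup_{\gamma<\beta}X_\gamma=X_\beta$, which is a section of~$\pi_\beta$ because it is one on each~$F_\gamma$ and the $F_\gamma$ exhaust~$F_\beta$. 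The successor step is the crux, and the only place where the hypothesis is used: given a section $s_\beta$ of~$\pi_\beta$, it determines a retraction $r_\beta\:X_\beta\rarrow B$ onto $\ker\pi_\beta=B$, namely $x\mapsto x-s_\beta\pi_\beta(x)$. Form the pushout of the short exact sequence $0\rarrow X_\beta\rarrow X_{\beta+1}\rarrow S_\beta\rarrow0$ along~$r_\beta$; this gives a short exact sequence $0\rarrow B\rarrow V\rarrow S_\beta\rarrow0$, which splits because $\Ext^1_\sK(S_\beta,B)=0$. Composing a retraction $V\rarrow B$ with the natural map $X_{\beta+1}\rarrow V$ produces a retraction $\widetilde r_\beta\:X_{\beta+1}\rarrow B$ that restricts to~$r_\beta$ on~$X_\beta$. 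The subobject $\ker\widetilde r_\beta\subset X_{\beta+1}$ is then carried isomorphically onto $F_{\beta+1}$ by~$\pi_{\beta+1}$, and I define $s_{\beta+1}$ to be the inverse of this isomorphism; since $\widetilde r_\beta$ annihilates $s_\beta(F_\beta)$, a short check shows that $s_{\beta+1}$ agrees with~$s_\beta$ on~$F_\beta$.

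I do not expect any real obstacle: this is the classical Eklof lemma, and the argument is a transfinite diagram chase. Two points deserve a careful word. First, one must check that preimages under~$\pi$ respect the transfinite unions defining the filtration, so that the $X_\beta$ form a \emph{continuous} chain and the limit step of the induction is legitimate — this is where the Grothendieck hypothesis on~$\sK$ enters. Second, in the successor step one must verify that the freshly built $s_{\beta+1}$ genuinely coincides with the previously built~$s_\beta$ on~$F_\beta$; without this coherence the limit steps, and hence the whole induction, would collapse. Both are short verifications once one works in the categorical rather than element-wise setting. Finally, the displayed identity ${}^{\perp_1}\sB=\Fil({}^{\perp_1}\sB)$ follows by combining the inclusion just proved with the trivial inclusion $\sC\subset\Fil(\sC)$ (via the one-step filtration $0\subset F$), applied to $\sC={}^{\perp_1}\sB$.
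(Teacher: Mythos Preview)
Your argument is correct: this is precisely the classical Eklof proof, carried out in a Grothendieck category.  The successor step is handled properly (the pushout along $r_\beta$ yields a splittable extension of $S_\beta$ by $B$, and the resulting retraction extends $r_\beta$ coherently), and the limit step is where the AB5 axiom is invoked so that $\pi^{-1}$ preserves directed unions of subobjects.  One cosmetic remark: the formula $x\mapsto x-s_\beta\pi_\beta(x)$ is element notation, but it is easily read as the morphism $\mathrm{id}_{X_\beta}-\iota\circ s_\beta\circ\pi_\beta$ (with $\iota\:B\hookrightarrow X_\beta$), which factors through $\ker\pi_\beta=B$ since postcomposing with $\pi_\beta$ gives zero.

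The paper itself does not prove the lemma at all; it simply points to~\cite[Lemma~4.5]{PR} for a version valid in arbitrary exact categories and to~\cite[Proposition~5.7]{Sto-ICRA} for a Grothendieck-type exposition.  Your write-up is therefore strictly more self-contained than the paper's.  The trade-off is that the cited exact-category version is more general (it does not require AB5, instead phrasing the transfinite extension via a suitable colimit of admissible monomorphisms), whereas your proof is tailored to the Grothendieck setting and exploits the ambient abelian structure and exactness of filtered colimits to make the induction transparent.
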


\begin{proof}
 This assertion, properly understood, holds in any exact category;
see~\cite[Lemma~4.5]{PR} for an argument applicable in such generality.
 For an exposition in the generality of certain exact category
analogues of Grothendieck abelian categories,
see~\cite[Proposition~5.7]{Sto-ICRA}.
\end{proof}

 The following classical result is due to Eklof
and Trlifaj~\cite[Theorems~2 and~10]{ET}.

\begin{thm} \label{eklof-trlifaj-theorem}
 Let\/ $\sK$ be a Grothendieck abelian category with enough projective
objects.
 Then any cotorsion pair generated by a \emph{set} of objects
in\/ $\sK$ is complete.
 More precisely, if\/ $\sS\subset\sK$ is a set of objects containing
a projective generator of\/ $\sK$, then the cotorsion pair $(\sA,\sB)$
generated by\/ $\sS$ in\/ $\sK$ is complete and\/ $\sA=\Fil(\sS)^\oplus
\subset\sK$.
\end{thm}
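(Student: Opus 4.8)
The plan is to construct, for every object $E\in\sK$, a special precover sequence $0\lrarrow B'\lrarrow F\lrarrow E\lrarrow0$ and a special preenvelope sequence $0\lrarrow E\lrarrow B\lrarrow A'\lrarrow0$ with $F$, $A'\in\Fil(\sS)$ and $B$, $B'\in\sS^{\perp_1}$, and then to feed these into Lemma~\ref{direct-summand-closure-is-complete-cotorsion}. Its remaining hypothesis, that $\Ext^1_\sK(A,X)=0$ for $A\in\Fil(\sS)$ and $X\in\sS^{\perp_1}$, comes for free from the Eklof Lemma~\ref{eklof-lemma}: since $\sS\subset{}^{\perp_1}(\sS^{\perp_1})$ tautologically, Eklof gives $\Fil(\sS)\subset\Fil({}^{\perp_1}(\sS^{\perp_1}))={}^{\perp_1}(\sS^{\perp_1})$. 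Granting the approximation sequences, Lemma~\ref{direct-summand-closure-is-complete-cotorsion} then yields that $(\Fil(\sS)^\oplus,(\sS^{\perp_1})^\oplus)$ is a complete cotorsion pair with ${}^{\perp_1}(\sS^{\perp_1})=\Fil(\sS)^\oplus$; and since any class of the form $\sC^{\perp_1}$ is automatically closed under direct summands, $(\sS^{\perp_1})^\oplus=\sS^{\perp_1}=\sB$, so that $\sA=\Fil(\sS)^\oplus$ and $(\sA,\sB)$ is complete, as claimed.

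For the preenvelope sequence I would run the transfinite ``small object'' construction. Using that $\sK$ has enough projectives, fix for each $S\in\sS$ an admissible short exact sequence $0\to\Omega_S\to P_S\to S\to0$ with $P_S$ projective, so that $\Ext^1_\sK(S,-)=\coker\bigl(\Hom_\sK(P_S,-)\to\Hom_\sK(\Omega_S,-)\bigr)$. Starting from $E_0=E$, let $E_{\beta+1}$ be the pushout of $E_\beta$ and $\bigoplus_{(S,g)}P_S$ over $\bigoplus_{(S,g)}\Omega_S$, where the coproducts run over the \emph{set} of all pairs $(S,g)$ with $S\in\sS$ and $g\:\Omega_S\to E_\beta$, and the map to $E_\beta$ restricts to $g$ on the $(S,g)$\+summand; at limit ordinals put $E_\beta=\varinjlim_{\gamma<\beta}E_\gamma$. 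Then each $E_\beta\to E_{\beta+1}$ is an admissible monomorphism with cokernel $\bigoplus_{(S,g)}S$, so $(E_\gamma/E)_{0\le\gamma\le\lambda}$ is a filtration whose successive quotients are coproducts of objects of $\sS$, whence $E_\lambda/E\in\Fil(\sS)$ for every $\lambda$. The point is then to choose $\lambda$ large enough: since $\sS$ is a set and a Grothendieck category is locally presentable, there is a regular cardinal $\lambda$ for which every $\Omega_S$, $S\in\sS$, is $\lambda$\+presentable, and then any $g\:\Omega_S\to E_\lambda=\varinjlim_{\gamma<\lambda}E_\gamma$ factors through some $E_\gamma$; by the construction of $E_{\gamma+1}$ it extends to $P_S\to E_{\gamma+1}\subset E_\lambda$. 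Hence $\Ext^1_\sK(S,E_\lambda)=0$ for all $S\in\sS$, so $B:=E_\lambda\in\sS^{\perp_1}$ and $0\to E\to B\to E_\lambda/E\to0$ is the desired sequence.

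For the precover sequence I would use the projective generator $P\in\sS$: choose an epimorphism $P^{(X)}\twoheadrightarrow E$ with kernel $K$ (so $P^{(X)}\in\Fil(\sS)$), apply the previous paragraph to $K$ to obtain $0\to K\to B'\to A''\to0$ with $B'\in\sS^{\perp_1}$ and $A''\in\Fil(\sS)$, and form the pushout $F$ of $P^{(X)}\leftarrow K\to B'$. This produces at once an admissible short exact sequence $0\to B'\to F\to E\to0$ and an admissible short exact sequence $0\to P^{(X)}\to F\to A''\to0$, the latter exhibiting $F$ as an extension of two members of $\Fil(\sS)$, so that $F\in\Fil(\sS)$. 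Together with the preceding paragraph this supplies all the input for Lemma~\ref{direct-summand-closure-is-complete-cotorsion}, finishing the argument. (Alternatively, once the special preenvelopes are at hand one could deduce the existence of special precovers from the Salce Lemma~\ref{salce-lemma}, using that $\sA$ contains all projectives and $\sB$ all injectives; but the explicit pushout is needed anyway to pin down $F\in\Fil(\sS)$, hence the identification $\sA=\Fil(\sS)^\oplus$.)

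I expect the crux to be the second paragraph, specifically the verification that $E_\lambda\in\sS^{\perp_1}$: this is the only place where the hypothesis that the cotorsion pair is generated by a \emph{set} (and not merely a class) of objects really enters, through the factorization of a map out of a presentable object through a stage of a sufficiently long chain of subobjects. Everything else — the pushout manipulations, the closure of $\Fil(\sS)$ under (transfinite) extensions, the orthogonality from Eklof's lemma, and the passage to direct summands — should be routine.
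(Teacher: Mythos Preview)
Your argument is correct and is precisely the classical Eklof--Trlifaj small object argument: build special preenvelopes by a transfinite pushout along syzygies of the generators, then obtain special precovers by the Salce pushout using the projective generator in~$\sS$, and conclude via Lemma~\ref{direct-summand-closure-is-complete-cotorsion} and the Eklof Lemma~\ref{eklof-lemma}. The only minor point worth making explicit is that the successive quotients $E_{\beta+1}/E_\beta\simeq\bigoplus_{(S,g)}S$ are coproducts rather than single objects of~$\sS$, so one needs the (standard) observation that such coproducts lie in $\Fil(\sS)$ and that $\Fil(\Fil(\sS))=\Fil(\sS)$; you allude to this but do not spell it out.

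As for comparison with the paper: the paper does not give an argument at all but simply cites the literature (\cite{ET}, \cite{PR}, \cite{PS4}, \cite{Sto-ICRA}). What you have written is essentially the content of those references specialized to a Grothendieck category with enough projectives, so your approach and the paper's cited sources coincide; you have just supplied the details that the paper omits.
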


\begin{proof}
 This theorem, properly stated, holds in any locally presentable
abelian category; see~\cite[Corollary~3.6 and Theorem~4.8]{PR}
or~\cite[Theorems~3.3 and~3.4]{PS4}.
 For a version applicable to a certain class of exact categories
generalizing Grothendieck abelian categories,
see~\cite[Theorem~5.16]{Sto-ICRA}.
\end{proof}

 A class of objects $\sA$ in a Grothendieck abelian category $\sK$ is
said to be \emph{deconstructible} if there exists a set of objects
$\sS\subset\sK$ such that $\sA=\Fil(\sS)$.
 It follows from Lemma~\ref{eklof-lemma} and
Theorem~\ref{eklof-trlifaj-theorem} that \emph{any
cotorsion pair generated by a deconstructible class of objects in
a Grothendieck category with enough projective objects is complete}.
 Indeed, Lemma~\ref{eklof-lemma} tells that the cotorsion pair
generated by $\Fil(\sS)$ in $\sK$ coincides with the one generated
by~$\sS$.

\Section{Locality: First Examples and Counterexamples}
\label{locality-first-examples-secn}

 Throughout this paper, we work in the following notation and setting.
 We consider a commutative ring $R$, an arbitrary element $s\in R$,
and a finite collection of elements $s_1$,~\dots, $s_d\in R$ generating
the unit ideal in~$R$.

 The notation $R[s^{-1}]$ stands for the localization of the ring $R$
with respect to (the multiplicative subset spanned by) the element~$s$.
 So $R[s^{-1}]=S^{-1}R$, where $S=\{1,s,s^2,s^3,\dotsc\}$.
 For any $R$\+module $M$, we put $M[s^{-1}]=S^{-1}M=R[s^{-1}]\ot_RM$.
 Both the localization $M[s^{-1}]$ and the \emph{colocalization}
$\Hom_R(R[s^{-1}],M)$ are $R[s^{-1}]$\+modules.
 The same localization functor $M\longmapsto M[s^{-1}]$, as well as
the colocalization functor $M\longmapsto\Hom_R(R[s^{-1}],M)$, is
applied to complexes of $R$\+modules termwise.

 Furthermore, we suppose given a class of commutative rings $\R$ that
is stable under localizations with respect to elements, that is,
$R[s^{-1}]\in\R$ for any $R\in\R$ and $s\in R$.
 We denote by $\sK_R$ the big ambient abelian category associated with
$R$, which may be either the category of $R$\+modules $\sK_R=R\Modl$ or
the category of (unbounded) complexes of $R$\+modules
$\sK_R=\bC(R\Modl)$.

 Then we consider a system of classes of objects or full subcategories
$\sL_R\subset\sK_R$ defined for all the rings $R\in\R$.
 Abusing terminology, we will speak of
``a~class $\sL=(\sL_R)_{R\in\R}$'' for brevity.
 Alternatively, we will also speak of $\sL=(\sL_R)_{R\in\R}$ as
``a~property of modules or complexes over commutative rings $R\in\R$''
(presuming the property of a module or complex over $R$ to belong
to~$\sL_R$).

 A class/property of modules or complexes $\sL=(\sL_R)_{R\in\R}$ is
called \emph{local} if it satisfies the following two conditions:
\begin{description}
\item[Ascent] For any ring $R\in\R$, element $s\in R$, and module or
complex $M\in\sL_R$, the module/complex $M[s^{-1}]$ belongs to
$\sL_{R[s^{-1}]}$.
\item[Descent] Let $R\in\R$ be a ring and $s_1$,~\dots, $s_d\in R$ be
a finite collection of elements generating the unit ideal in~$R$.
 Let $M\in\sK_R$ be a module or complex such that $M[s_j^{-1}]\in
\sL_{R[s_j^{-1}]}$ for every $1\le j\le d$.
 Then $M\in\sL_R$.
\end{description}

\begin{rem}
 Speaking in terms of quasi-coherent sheaves over schemes, one can
say that the definitions of ascent and descent above are stated for
principal affine open subschemes $\Spec R[s^{-1}]$ in affine schemes
$\Spec R$ and principal affine open coverings $\Spec R=\bigcup_{j=1}^d
\Spec R[s_j^{-1}]$ only.
 More generally, one could consider \emph{not necessarily principal} 
affine open subschemes $\Spec S$ in affine schemes $\Spec R$
and coverings of affine schemes by such open subschemes.
 Quite generally, one can consider locality of properties of
quasi-coherent sheaves or complexes over schemes, defined in terms of
ascent and descent conditions for arbitrary Zariski open coverings.

 All these points of view are equivalent.
 Any local property of modules or complexes in the sense of our
definition above defines a local property of quasi-coherent sheaves
or complexes over schemes coverable by affine open subschemes
$\Spec R$ with $R\in\R$, where the locality is understood as ascent
for all open subschemes and descent for all Zariski open coverings.
 We refer to~\cite[Lemma~2.1]{ES}, \cite[Lemma~5.3.2]{Vak},
and~\cite[Section Tag~01OO]{SP} for further discussions with details.
\end{rem}

 Furthermore, we will say that a class $\sL=(\sL_R)_{R\in\R}$ is
\emph{very local} if it is local and satisfies the following additional
\begin{description}
\item[Direct image condition] For any ring $R\in\R$ and element
$s\in R$, any module or complex from $\sL_{R[s^{-1}]}$, viewed as
module/complex over $R$, belongs to~$\sL_R$.
\end{description}

\begin{lem} \label{ascent+direct-image-imply-descent}
 Assume that a class\/ $\sL=(\sL_R\subset\sK_R)_{R\in\R}$ satisfies
the ascent and direct image conditions.
 Assume further that, for every ring $R\in\R$, the full subcategory\/
$\sL_R$ is closed under finite direct sums and the kernels of
epimorphisms in\/~$\sK_R$.
 Then the class\/ $\sL$ also satisfies descent; so, it is very local.
\end{lem}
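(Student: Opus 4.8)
The plan is to resolve $M$ by a finite complex of objects already known to lie in $\sL_R$, namely the augmented \v Cech complex of the principal affine open covering $\Spec R=\bigcup_{j=1}^d\Spec R[s_j^{-1}]$, and then to take that resolution apart by an elementary d\'evissage. Concretely, since $s_1$,~\dots,~$s_d$ generate the unit ideal in $R$, there is an exact sequence in $\sK_R$
\begin{equation*}
 0\rarrow M\rarrow\bigoplus_{1\le j\le d}M[s_j^{-1}]\rarrow
 \bigoplus_{1\le j<k\le d}M[(s_js_k)^{-1}]\rarrow\dotsb\rarrow
 M[(s_1\dotsm s_d)^{-1}]\rarrow0,
\end{equation*}
whose term indexed by a subset $\{j_0<\dots<j_p\}\subseteq\{1,\dots,d\}$ is $M[(s_{j_0}\dotsm s_{j_p})^{-1}]$; write $C^p$ for the direct sum of these over all $(p+1)$-element subsets. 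When $\sK_R=\bC(R\Modl)$ the displayed sequence is exact because it is so in every cohomological degree, where it reduces to the classical \v Cech exact sequence of modules (see, e.g., \cite{Hart} or \cite{SP}).

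The first substantive step is to check that $C^p\in\sL_R$ for all $p\ge0$. Inverting the elements $s_{j_0}$,~\dots,~$s_{j_p}$ of $R$ successively amounts to inverting their product, so $M[(s_{j_0}\dotsm s_{j_p})^{-1}]$ is the iterated localization $(\dotsb(M[s_{j_0}^{-1}])[s_{j_1}^{-1}]\dotsb)[s_{j_p}^{-1}]$ of the module or complex $M[s_{j_0}^{-1}]\in\sL_{R[s_{j_0}^{-1}]}$, with all the intermediate rings lying in $\R$ by assumption. Applying the Ascent condition $p$ times yields $M[(s_{j_0}\dotsm s_{j_p})^{-1}]\in\sL_{R[(s_{j_0}\dotsm s_{j_p})^{-1}]}$, and applying the direct image condition $p$ times (using that restriction of scalars is transitive) shows that this module or complex, viewed over $R$, lies in $\sL_R$. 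Since $\sL_R$ is closed under finite direct sums, it follows that $C^p\in\sL_R$.

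It remains to read the exact sequence as a sequence of short exact sequences and invoke closure under kernels of epimorphisms. Setting $C^d=0$ and $Z^p=\ker(C^p\rarrow C^{p+1})$ for $0\le p\le d-1$, exactness gives $Z^{d-1}=C^{d-1}$, $Z^p=\im(C^{p-1}\rarrow C^p)$ for $1\le p\le d-1$, and $M\simeq\im(M\rarrow C^0)=Z^0$. By descending induction on~$p$: $Z^{d-1}=C^{d-1}\in\sL_R$, and whenever $Z^{p+1}\in\sL_R$, the epimorphism $C^p\rarrow Z^{p+1}$ induced by $C^p\rarrow C^{p+1}$ has both terms in $\sL_R$, so its kernel $Z^p$ lies in $\sL_R$ because $\sL_R$ is closed under kernels of epimorphisms in $\sK_R$. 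For $p=0$ this gives $M\simeq Z^0\in\sL_R$, which is precisely the Descent condition; combined with the Ascent and direct image conditions, it makes $\sL$ very local.

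The main point — and the only place where all four hypotheses are simultaneously in play — is the first step: the Ascent and direct image conditions are applied iteratively to bring each higher \v Cech localization $M[(s_{j_0}\dotsm s_{j_p})^{-1}]$, viewed over $R$, into $\sL_R$, while closure under finite direct sums is what packages these localizations into $C^p$. Everything afterwards is routine homological d\'evissage enabled by closure under kernels of epimorphisms; the only mild subtlety, when $\sK_R=\bC(R\Modl)$, is the reduction of the exactness of the \v Cech complex of complexes to the module case, which is termwise.
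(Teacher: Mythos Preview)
Your proof is correct and follows essentially the same approach as the paper's own argument: both use the augmented \v Cech coresolution~\eqref{cech-coresolution}, observe via ascent and the direct image condition that every nonleftmost term lies in $\sL_R$, and then peel off the kernels by descending induction using closure under kernels of epimorphisms. The only cosmetic difference is that the paper notes exactness of the \v Cech sequence by localizing at primes, whereas you cite the classical fact directly; and your bookkeeping of ``$p$ applications'' of the direct image condition is slightly imprecise (one needs $p+1$ steps along the chain, or a single step via $R\rarrow R[(s_{j_0}\dotsm s_{j_p})^{-1}]$), but this is inconsequential.
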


\begin{proof}
 Let $s_1$,~\dots, $s_d\in R$ be a collection of elements generating
the unit ideal.
 Then the \v Cech coresolution
\begin{multline} \label{cech-coresolution}
 0\lrarrow M\lrarrow\bigoplus\nolimits_{i=1}^d M[s_i^{-1}]\lrarrow
 \bigoplus\nolimits_{1\le i<j\le d}M[s_i^{-1},s_j^{-1}] \lrarrow\dotsb
 \\ \lrarrow\bigoplus\nolimits_{1\le i_1<\dotsb<i_k\le d}
 M[s_{i_1}^{-1},\dotsc,s_{i_k}^{-1}]\lrarrow\dotsb\lrarrow
 M[s_1^{-1},\dotsc,s_d^{-1}]\lrarrow0
\end{multline}
is a finite exact sequence of $R$\+modules for any $R$\+module~$M$.
 For a complex of $R$\+modules $M$,
the bicomplex~\eqref{cech-coresolution} is a finite exact sequence
of complexes of $R$\+modules.
 To show that the sequence~\eqref{cech-coresolution} is exact, one
can, e.~g., localize it at every prime ideal $\mathfrak p\subset R$
and see that the resulting complex of $R_{\mathfrak p}$\+modules
is contractible.

 Now if $M[s_j^{-1}]\in\sL_{R[s_j^{-1}]}$ for every $1\le j\le d$ and
the class $\sL$ satisfies the ascent and direct image conditions, then
all the $R$\+modules or complexes of $R$\+modules
$M[s_{i_1}^{-1},\dotsc,s_{i_k}^{-1}]$ appearing
in~\eqref{cech-coresolution}, except perhaps $M$ itself,
belong to~$\sL_R$.
 The assumption that $\sL_R$ is closed under finite direct sums
in $\sK_R$ implies that all the terms of the exact
sequence~\eqref{cech-coresolution}, except perhaps the leftmost one,
belong to~$\sL_R$.
 Finally, using the assumption that the class $\sL_R$ is closed under
the kernels of epimorphisms in $\sK_R$ and moving by induction from
the rightmost end of the sequence to its leftmost end, one proves
that $M\in\sL_R$.
\end{proof}

\begin{ex} \label{flatness-local-example}
 The flatness property of modules over commutative rings is very local.
 Indeed, one can easily see that the $S$\+module $S\ot_RF$ is flat
for any flat $R$\+module $F$ and any ring homomorphism $R\rarrow S$;
so the ascent is satisfied.
 Furthermore, the $R$\+module $G$ is flat for any commutative ring
homomorphism $R\rarrow S$ making $S$ a flat $R$\+module and any
flat $S$\+module~$G$; so the direct image condition holds as well.
 As the class of flat $R$\+modules is closed under (finite or
infinite) direct sums and kernels of epimorphisms,
Lemma~\ref{ascent+direct-image-imply-descent} tells that the descent
is satisfied.
\end{ex}

 Let $R$ be a commutative ring.
 An $R$\+module $C$ is said to be
\emph{contraadjusted}~\cite[Section~1.1]{Pcosh},
\cite[Section~5]{ST}, \cite[Section~2]{Pcta}
if $\Ext^1_R(R[s^{-1}],C)=0$ for all elements $s\in R$.
 One should keep in mind that the projective dimension of
the flat $R$\+module $R[s^{-1}]$ never exceeds~$1$
\,\cite[proof of Lemma~2.2]{ST}, \cite[proof of Lemma~2.1]{Pcta}
(see~\cite[Corollary~2.23]{GT} for a much more general result).

\begin{lem} \label{contraadjusted-characterized}
 Let $R$ be a commutative ring, $s\in R$ be an element, and
$C$ be an $R$\+module.
 Then one has $\Ext_R^1(R[s^{-1}],C)=0$ if and only if, for any
sequence of elements $a_0$, $a_1$, $a_2$,~\dots~$\in C$, the infinite
system of nonhomogeneous linear equations
\begin{equation} \label{contraadjustedness-equation-system}
 b_n-sb_{n+1}=a_n, \qquad n\ge0
\end{equation}
has a (possibly nonunique) solution $b_0$, $b_1$, $b_2$,~\dots~$\in C$.
\end{lem}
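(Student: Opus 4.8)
The plan is to compute $\Ext^1_R(R[s^{-1}],C)$ explicitly from the standard presentation of $R[s^{-1}]$ as an $R$-module and read off the vanishing condition as the solvability of the linear system~\eqref{contraadjustedness-equation-system}. First I would recall the canonical free presentation of the localization: $R[s^{-1}]$ is the colimit of the sequence $R\overset{s}\to R\overset{s}\to R\overset{s}\to\dotsb$, so there is a short exact sequence
\begin{equation*}
 0\lrarrow\bigoplus_{n\ge0}R\overset{\mathrm{id}-\sigma}{\lrarrow}
 \bigoplus_{n\ge0}R\lrarrow R[s^{-1}]\lrarrow0,
\end{equation*}
where $\sigma$ is the shift map sending the $n$-th basis vector $e_n$ to $s\,e_{n+1}$. (This is the usual ``telescope'' resolution; since the maps in the telescope are injective, the kernel of the augmentation $\bigoplus_n R\to R[s^{-1}]$, $e_n\mapsto s^{-n}$, is exactly the image of $\mathrm{id}-\sigma$, and $\mathrm{id}-\sigma$ is itself injective.) Because $\bigoplus_n R$ is free, this is a projective resolution of length~$1$, which also re-proves the remark preceding the lemma that $\pd_R R[s^{-1}]\le1$.

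Next I would apply $\Hom_R(-,C)$. Since $\Hom_R\bigl(\bigoplus_n R,\,C\bigr)\simeq\prod_{n\ge0}C$, identifying a homomorphism with the sequence of its values $(c_n)_{n\ge0}=(c_n)$ where $c_n$ is the image of $e_n$, the complex computing $\Ext^\bu_R(R[s^{-1}],C)$ becomes
\begin{equation*}
 \prod_{n\ge0}C\overset{(\mathrm{id}-\sigma)^*}{\lrarrow}\prod_{n\ge0}C.
\end{equation*}
A direct computation of the dual map: $(\mathrm{id}-\sigma)^*$ sends a sequence $(b_n)_{n\ge0}$ to the sequence whose $n$-th component is $b_n-sb_{n+1}$ (the $\mathrm{id}$ contributes $b_n$, and $\sigma^*$ contributes the pullback along $e_n\mapsto s e_{n+1}$, i.e.\ $sb_{n+1}$). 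Hence $\Ext^1_R(R[s^{-1}],C)=\coker\bigl((\mathrm{id}-\sigma)^*\bigr)$, and this cokernel vanishes if and only if the map $(b_n)\mapsto(b_n-sb_{n+1})$ is surjective onto $\prod_{n\ge0}C$, which is precisely the statement that for every $(a_n)_{n\ge0}\in\prod_{n\ge0}C$ the system $b_n-sb_{n+1}=a_n$ ($n\ge0$) has a solution $(b_n)_{n\ge0}$ in~$C$. This is exactly the asserted equivalence.

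The only point that needs a little care — and the closest thing to an obstacle — is justifying that the telescope sequence is exact on the left, i.e.\ that $\mathrm{id}-\sigma\colon\bigoplus_n R\to\bigoplus_n R$ is injective and that its image is the full kernel of the augmentation. Injectivity is immediate: if $\sum_{n=0}^N x_n e_n$ is in the kernel of $\mathrm{id}-\sigma$, comparing coefficients of $e_0$ gives $x_0=0$, then of $e_1$ gives $x_1=0$, and so on by induction on the degree. For exactness in the middle one checks that $R[s^{-1}]$ with the colimit structure satisfies the universal property, or equivalently one notes directly that an element $\sum_n x_n e_n$ maps to $\sum_n s^{-n}x_n=0$ in $R[s^{-1}]$ iff $s^k\sum_n s^{N-n}x_n=0$ in $R$ for some $k$ (clearing denominators), and a short manipulation rewrites such an element as $(\mathrm{id}-\sigma)$ applied to an explicit preimage built from the partial sums $s^{k}\sum_{m\le n}s^{-m}x_m$. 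Alternatively, and more cleanly, I would simply invoke that $R[s^{-1}]=\varinjlig\bigl(R\overset{s}\to R\overset{s}\to\cdots\bigr)$ with injective transition maps, for which the telescope complex is the standard projective resolution; this is entirely routine. Everything else is formal dualization, and the identification of the two vanishing conditions is then immediate.
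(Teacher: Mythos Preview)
Your proof is correct and follows exactly the approach the paper indicates: compute $\Ext^1_R(R[s^{-1}],C)$ from the natural (telescope) projective resolution of $R[s^{-1}]$ and read off the vanishing as surjectivity of $(b_n)\mapsto(b_n-sb_{n+1})$. The paper gives no further details beyond a reference, so your write-up is in fact more complete than the original.
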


\begin{proof}
 This is what one obtains by computing the Ext module in question in
terms of a natural projective resolution of the $R$\+module $R[s^{-1}]$.
 See~\cite[Lemma~5.1]{ST} or~\cite[Lemma~2.1(a)]{Pcta} for the details.
\end{proof}

 An $R$\+module $F$ is said to be \emph{very flat} if
$\Ext^1_R(F,C)=0$ for all contraadjusted $R$\+modules~$C$.
 In other words, the pair of classes (very flat $R$\+modules,
contraadjusted $R$\+modules) is defined as the cotorsion pair
in $R\Modl$ generated by the set of modules
$\{\,R[s^{-1}]\in R\Modl\mid s\in R\,\}$.
 By Lemma~\ref{generated-by-projdim1-lemma} and
Theorem~\ref{eklof-trlifaj-theorem}, this cotorsion pair is
hereditary and complete.
 Moreover, Theorem~\ref{eklof-trlifaj-theorem} tells that the class
of very flat $R$\+modules can be described as the class of all direct
summands of $R$\+modules filtered by the $R$\+modules $R[s^{-1}]$,
\,$s\in R$ \,\cite[Theorem~1.1.1 and Corollary~1.1.4]{Pcosh}.
 Obviously, any projective $R$\+module is very flat, and any very
flat $R$\+module is flat.

\begin{ex} \label{very-flatness-local-example}
 The very flatness property of modules over commutative rings is
very local.
 The basic point is that the system of classes $\sL_R$ of all
$R$\+modules of the form $R[s^{-1}]$, \,$s\in R$, satisfies the ascent
and direct image conditions (for all commutative rings~$R$).
 The ascent and direct image conditions for the class of very
flat modules follow easily (e.~g, one can use the description of
very flat modules in terms of filtrations).

 More generally, the $S$\+module $S\ot_RF$ is very flat for any
very flat $R$\+module $F$ and any commutative ring homomorphism
$R\rarrow S$ \,\cite[Lemma~1.2.2(b)]{Pcosh}.
 The property that the underlying $R$\+module of any very flat
$S$\+module is very flat holds for any commutative ring
homomorphism $R\rarrow S$ such that the $R$\+module $S[s^{-1}]$
is very flat for all $s\in S$ \,\cite[Lemma~1.2.3(b)]{Pcosh}.

 The (very flat, contraadjusted) cotorsion pair is hereditary,
so the class of very flat modules is closed under kernels of
epimorphisms.
 It is also obviously closed under (finite or infinite) direct sums.
 Applying Lemma~\ref{ascent+direct-image-imply-descent}, one concludes
that the descent is satisfied~\cite[Lemma~1.2.6(a)]{Pcosh}.
\end{ex}

\begin{ex} \label{projectivity-local-example}
 The projectivity property of modules over commutative rings is local.
 The ascent is obvious: for any projective module $P$ over a ring $R$
and any ring homomorphism $R\rarrow S$, the $S$\+module $S\ot_RP$ is
projective.
 The descent is a celebrated (and difficult) theorem of Raynaud
and Gruson~\cite[\S\,II.3.1]{RG}, \cite{Pe}.

 However, the projectivity is \emph{not} very local: the direct image
condition is not satisfied.
 Indeed, given a commutative ring $R$ and an element $s\in R$,
the free $R[s^{-1}]$\+module $R[s^{-1}]$ is usually \emph{not}
projective as an $R$\+module.
 So Lemma~\ref{ascent+direct-image-imply-descent} is not applicable
to the class of projective modules.
\end{ex}

 An $R$\+module $C$ is said to be \emph{cotorsion} if $\Ext_R^1(F,C)=0$
for all flat $R$\+modules~$C$.
 Over any associative ring $R$, the pair of classes (flat modules,
cotorsion modules) is a hereditary complete cotorsion pair.
 This assertion (specifically, the completeness claim) is one of
the formulations of the \emph{flat cover conjecture}, which was
suggested by Enochs in~\cite[page~196]{En} and proved
in the paper~\cite{BBE}.
 Obviously, any cotorsion module over a commutative ring is
contraadjusted (see~\cite[Section~12]{Pcta} for an introductory
discussion in the case of abelian groups).

 As a preliminary remark to the examples below, it is helpful to
observe that the contraadjustedness and cotorsion properties are
preserved by all restrictions of scalars.
 For any (commutative) ring homomorphism $R\rarrow S$ and any
cotorsion (respectively, contraadjusted) $S$\+module $C$,
the underlying $R$\+module of $C$ is also cotorsion (resp.,
contraadjusted) \cite[Lemmas~1.3.4(a) and~1.2.2(a)]{Pcosh}.
 In particular, the preservation of the contraadjustedness property
by restrictions of scalars is immediately obvious
from Lemma~\ref{contraadjusted-characterized}.
 See Examples~\ref{contraadjustedness-colocal-example}
and~\ref{cotorsion-colocal-example} below for a brief discussion.

\begin{ex} \label{cotorsion-not-local-example}
 The class of cotorsion modules is \emph{not} local; in fact, it
does not even satisfy ascent (not even over Noetherian rings).
 The class of contraadjusted modules does \emph{not} satisfy ascent,
either.
 The following counterexample serves to demonstrate both of
these negations.

 Any complete Noetherian commutative local ring $\widehat R$ is
a cotorsion module over itself, as one can see from the classification
of flat cotorsion modules over Noetherian commutative rings
in~\cite[Section~2]{En2}.
 More generally, any $\widehat R$\+contramodule is a cotorsion
$\widehat R$\+module by~\cite[Theorem~9.3]{Pcta}.

 Let $R=k[x,y]$ be the ring of polynomials in two variables over
a field~$k$.
 Then the ring of formal Taylor power series $\widehat R=k[[x,y]]$
is a cotorsion module over itself, and consequently also a cotorsion
(hence contraadjusted) module over~$R$.
 We claim that the localization $\widehat R[x^{-1}]$ is \emph{not}
a contraadjusted $R$\+module, hence also not a contraadjusted
$R[x^{-1}]$\+module and not a cotorsion $R[x^{-1}]$\+module.

 The point is that the localization does not commute with
the completion.
 The complete one-dimensional Noetherian local ring
$k[[x]][x^{-1}][[y]]$ is a cotorsion module over itself and over
$k[x,y]$, but the localization $\widehat R[x^{-1}]=k[[x,y]][x^{-1}]$
is only a dense subring in $k[[x]][x^{-1}][[y]]$,
$$
 C=k[[x,y]][x^{-1}]\subsetneq k[[x]][x^{-1}][[y]]=\widehat C.
$$
 For example, the element $\sum_{n=0}^\infty x^{-n}y^n$ belongs
to $\widehat C$ but not to~$C$.

 Specifically, let us show that
$\Ext^1_R(R[y^{-1}],\widehat R[x^{-1}])\ne0$.
 Following Lemma~\ref{contraadjusted-characterized},
for this purpose if suffices to present a sequence of elements
$(a_n\in\widehat R[x^{-1}])_{n\ge0}$ such that the system of
nonhomogeneous linear
equations~\eqref{contraadjustedness-equation-system} for $s=y$ has
no solutions in $C=\widehat R[x^{-1}]$.
 The point is that the ambient $R$\+module
$\widehat C=k[[x]][x^{-1}][[y]]$ contains no elements infinitely
divisible by~$y$.
 For this reason, the system of
equations~\eqref{contraadjustedness-equation-system} is always
at most uniquely solvable in~$\widehat C$ \,\cite[Lemma~2.1(b)]{Pcta}
(and in fact the unique solution exists in $\widehat C$, as one can
see from~\cite[Theorem~3.3(c)]{Pcta}).

 Now put $a_n=x^{-n}\in C$ for all $n\ge0$.
 Then the system of elements $b_n=\sum_{i=0}^\infty y^na_{n+i}=
\sum_{i=0}^\infty x^{-n-i}y^n\in\widehat C$ forms a solution
of~\eqref{contraadjustedness-equation-system} for $s=y$.
 Since $b_0\notin C$, it follows
that~\eqref{contraadjustedness-equation-system} is unsolvable
in $C$ for $s=y$ and $a_n=x^{-n}$.

 One of the aims of this paper is to explain that the class of
cotorsion modules is \emph{antilocal}
(see Example~\ref{cotorsion-antilocal-example} below).
 The class of contraadjusted modules is likewise antilocal
(by Example~\ref{contraadjustedness-antilocal-example}).
 The cotorsion property is also \emph{colocal} presuming 
contraadjustedness, as we will see below in
Example~\ref{cotorsion-colocal-example}.
\end{ex}

\begin{ex} \label{injective-local-and-non-local-example}
 The class of injective modules satisfies the direct image
condition (see Example~\ref{injectivity-colocal-example} below).
 Over Noetherian rings, it also satisfies ascent and descent; this is
a theorem of Hartshorne~\cite[Lemma~II.7.16 and Theorem~II.7.18]{Hart}
based on Matlis' classification of injective modules over
Noetherian rings~\cite{Mat}.
 So the class of injective modules is very local over Noetherian rings.

 However, over non-Noetherian commutative rings injectivity is
\emph{not} a local property, and in fact, it does not even satisfy
ascent.
 The following example, based on the previous
Example~\ref{cotorsion-not-local-example}, shows that the localization
of an injective module with respect to an element of the ring
need not even be contraadjusted.

 Put $R=k[x,y]$ as in the previous example, and consider
the $R$\+module $M=k[x,x^{-1}]/k[x]\ot_k k[y,y^{-1}]/k[y]$.
 Denote by $S$ the trivial extension of the ring $R$ by the $R$\+module
$M$, i.~e., $S=R\oplus M$ and $R$ is a subring in $S$ with the product
of two elements from $R$ and $M$ in $S$ defined in terms of
the $R$\+module structure of $M$, while the product of any two
elements from $M$ is zero in~$S$.
 Here the $R$\+module $M$ is chosen in such a way that its dual
$R$\+module $\Hom_k(M,k)$ is isomorphic to the $R$\+module
$\widehat R=k[[x,y]]$ from the previous example.

 Put $J=\Hom_k(S,k)$; so $J$ is an injective $S$\+module.
 The underlying $R$\+module of $J$ is isomorphic to the direct sum
of the injective $R$\+module $\Hom_k(R,k)$ and
the $R$\+module~$\widehat R$, i.~e.,
$J\simeq\Hom_k(R,k)\oplus\widehat R$.
 Now we already know from the previous
Example~\ref{cotorsion-not-local-example} that $\widehat R[x^{-1}]$
is not a contraadjusted $R$\+module.
 Hence $J[x^{-1}]$ is not a contraadjusted $R$\+module, either;
and consequently $J[x^{-1}]$ is not a contraadjusted $S$\+module
and \emph{not} a contraadjusted $S[x^{-1}]$\+module.

 We will see below that the class of injective modules over arbitrary
commutative rings is actually both \emph{colocal} presuming
contraadjustedness (by Example~\ref{injectivity-colocal-example})
and \emph{strongly antilocal}
(see Example~\ref{injectivity-strongly-antilocal-example}).
\end{ex}

 A complex of $R$\+modules $J^\bu$ is said to be \emph{homotopy
injective} (or ``$K$\+injective'')~\cite{Spal} if, for any acyclic
complex of $R$\+modules $X^\bu$, any morphism of complexes of
$R$\+modules $X^\bu\rarrow J^\bu$ is homotopic to zero.
 Any complex of $R$\+modules is quasi-isomorphic to a homotopy
injective complex (and even to a homotopy injective complex of
injective $R$\+modules), which is defined uniquely up to
homotopy equivalence.

\begin{ex} \label{homotopy-injectivity-not-local-example}
 The homotopy injectivity of complexes of $R$\+modules is not a local
property, and in fact, it does not satisfy ascent, \emph{not} even
over Noetherian commutative rings~$R$.
 The following counterexample is essentially due to
Neeman~\cite[Example~6.5]{Neem-bb}.
 An exposition is available from~\cite{Bel}, so we restrict ourselves
here to a brief sketch.

 Let $R$ be any ``nontrivial enough'' Noetherian commutative ring and
$f\in R$ be a generic element.
 It suffices to take $R=k[x]$ to be the polynomial ring in one variable
over a field~$k$, and $f\in R$ any polynomial of degree~$\ge1$; or
$R=\boZ$ to be the ring of integers and $f\in R$ any integer with
the absolute value~$\ge2$.
 In both cases, it is important that $f$ be neither nilpotent nor
invertible.
 Specifically, we need the natural map of $R[f^{-1}]$\+modules
\begin{equation} \label{product-localization-not-commute}
 \left(\prod\nolimits_{m=0}^\infty R\right)[f^{-1}]\lrarrow
 \prod\nolimits_{m=0}^\infty R[f^{-1}]
\end{equation}
to be \emph{not} an isomorphism.

 Consider the algebra of dual numbers $S=R[\epsilon]/(\epsilon^2)$
over~$R$.
 Choose an injective coresolution $I_S^\bu$ of the free $S$\+module $S$,
and consider the homotopy injective complex of injective $S$\+modules
$J_S^\bu=\prod_{n\in\boZ}I_S^\bu[n]$.
 The complex $J_S^\bu$ is a quasi-isomorphic to the complex of
$S$\+modules $\prod_{n\in\boZ}S[n]=C_S^\bu=\bigoplus_{n\in\boZ}S[n]$
with all the terms isomorphic to $S$ and zero differential.
 Notice that the complex $C_S^\bu$ is also quasi-isomorphic to
the complex of injective $S$\+modules $\bigoplus_{n\in\boZ}I_S^\bu[n]$;
but the latter complex of injective $S$\+modules is \emph{not}
homotopy injective.

 Put $T=S[f^{-1}]=R[f^{-1}][\epsilon]/(\epsilon^2)$.
 Applying the same construction to the ring $T$, we obtain
a homotopy injective complex of injective $T$\+modules $J_T^\bu=
\prod_{n\in\boZ}I_T^\bu$, where we prefer to choose
$I_T^\bu=I_S^\bu[f^{-1}]$ as an injective coresolution of the free
$T$\+module~$T$.
 Here $I_S^\bu[f^{-1}]$ is a complex of injective $T$\+modules by
Hartshorne's theorem, as per the previous
Example~\ref{injective-local-and-non-local-example}.
 Similarly, $J_S^\bu[f^{-1}]$ is a complex of injective $T$\+modules.

 Now there is a natural morphism of complexes of injective $T$\+modules
$g\:J_S^\bu[f^{-1}]\allowbreak\rarrow J_T^\bu$, which is easily seen to
be a quasi-isomorphism.
 Indeed, both the complexes are quasi-isomorphic to~$C_T^\bu$.
 We claim that the complex of injective $T$\+modules $J_S^\bu[f^{-1}]$
is \emph{not} homotopy injective (this provides the promised
counterexample to ascent of homotopy injectivity).
 In order to prove as much, it suffices to show that $g$~is not
a homotopy equivalence of complexes of $T$\+modules.

 In fact, $g$~is not even a homotopy equivalence of complexes of
modules over the ring $\Lambda=\boZ[\epsilon]/(\epsilon^2)$.
 Indeed, applying the functor $\Hom_\Lambda(\boZ,{-})=
\Hom_T(T/\epsilon T,{-})$ transforms~$g$ into a morphism of complexes
of abelian groups (or $R[f^{-1}]$\+modules) which is not
a quasi-isomorphism.
 Specifically, computing the cohomology map induced by the morphism
of complexes $\Hom_\Lambda(\boZ,g)$ at any chosen cohomological
degree $n\in\boZ$ produces
the nonisomorphism~\eqref{product-localization-not-commute}.

 We will see below that the class of homotopy injective complexes of
injective modules over arbitrary commutative rings is both
\emph{colocal} presuming termwise contraadjustedness (by
Example~\ref{homotopy-injectivity-of-injectives-colocal-example})
and \emph{antilocal} (see
Example~\ref{homotopy-injectivity-antilocal-example}).
\end{ex}

\Section{Colocality: Counterexample and First Examples}
\label{colocality-first-examples-secn}

 As in Section~\ref{locality-first-examples-secn}, we consider
a commutative ring $R$, a arbitrary element $s\in R$, and a finite
collection of elements $s_1$,~\dots, $s_d\in R$ generating
the unit ideal of~$R$.
 We also suppose given a class of commutative rings $\R$ that is
stable under localizations with respect to elements.

 In this context, we would like to define what it means for a system
of classes of modules or complexes $(\sL_R)_{R\in\R}$ to be
\emph{colocal}.
 The idea is to replace the localization functors $M\longmapsto
M[s^{-1}]$ with the colocalization functors $M\longmapsto
\Hom_R(R[s^{-1}],M)$ in the definitions of ascent and descent in
Section~\ref{locality-first-examples-secn}.
 The resulting definition of coascent is pretty straightforward, but
the following conterexample shows that one has to be careful with
the codescent.

\begin{ex} \label{naive-codescent-fails}
 Let $R=\boZ$ be the ring of integers, or alternatively $R=k[x]$
the polynomial ring in one variable over a field~$k$.
 Let $s_1$ and~$s_2\in R$ be two coprime noninvertible elements;
so $(s_1,s_2)=R$ but $(s_1)\ne R\ne(s_2)$.
 Then for the $R$\+module $M=R$ one has $\Hom_R(R[s_1^{-1}],M)=0$
and $\Hom_R(R[s_2^{-1}],M)=0$.
 Thus a na\"\i ve dualization of the descent condition from
Section~\ref{locality-first-examples-secn} would render the codescent
effectively impossible to satisfy.
 In particular, such na\"\i ve codescent would fail for the class of
injective (as well as cotorsion) modules.
\end{ex}

 In view of Example~\ref{naive-codescent-fails}, we suggest that one
should consider the codescent and colocality \emph{within} the class
of contraadjusted modules or termwise contraadjusted complexes.
 Given a commutative ring $R$, denote by $R\Modl^\cta$ the class of
all contraadjusted $R$\+modules (as defined in
Section~\ref{locality-first-examples-secn}).

 Since the full subcategory $R\Modl^\cta$ is closed under extensions
in the abelian $R\Modl$, it inherits an exact category structure from
the abelian exact structure of $R\Modl$.
 Here contraadjustedness is chosen as the weakest one of all colocal
properties.
 In our approach, it will satisfy the codescent \emph{by definition}.

 So, in the context of colocality, we consider various classes of
objects within the big ambient exact category associated with $R$,
which may be either the category of contraadjusted $R$\+modules
$\sK_R^\cta=R\Modl^\cta$ or the category of complexes of contraadjusted
$R$\+modules $\sK_R^\cta=\bC(R\Modl^\cta)$.
 Then we consider a system of classes of objects $\sL_R\subset
\sK_R^\cta$ defined for all rings $R\in\R$.

 A class/property of modules or complexes $\sL=
(\sL_R\subset\sK_R^\cta)_{R\in\R}$ is called \emph{colocal} if it
satisfies the following two conditions:
\begin{description}
\item[Coascent] For any ring $R\in\R$, element $s\in R$, and module
or complex $M\in\sL_R$, the module/complex $\Hom_R(R[s^{-1}],M)$
belongs to $\sL_{R[s^{-1}]}$.
\item[Codescent] Let $R\in\R$ be a ring and $s_1$,~\dots, $s_d\in R$
be a finite collection of elements generating the unit ideal in~$R$.
 Let $M\in\sK_R^\cta$ be a module or complex such that
$\Hom_R(R[s_j^{-1}],M)\in\sL_{R[s_j^{-1}]}$ for every $1\le j\le d$.
 Then $M\in\sL_R$.
\end{description}

 Furthermore, we will say that a class $\sL=(\sL_R)_{R\in\R}$ is
\emph{very colocal} if it is colocal and satisfies the direct image
condition from Section~\ref{locality-first-examples-secn}.

\begin{ex} \label{contraadjustedness-colocal-example}
 The contraadjustedness property of modules over commutative rings is
very colocal.
 The codescent, as defined above, is trivially satisfied; but one still
needs to check the coascent and the direct image condition.
 Essentially, these hold for the same reasons that were put forward
as an explanation of the ascent and direct image properties of very
flatness in the first paragraph of
Example~\ref{very-flatness-local-example}.
 This is a general observation formulated in the next
Proposition~\ref{left-right-ascent-coascent-direct-image-prop}.

 Moreover, the $S$\+module $\Hom_R(S,C)$ is contraadjusted for
any contraadjusted $R$\+module $C$ and any commutative ring
homomorphism $R\rarrow S$ such that the localization $S[s^{-1}]$ is
a very flat $R$\+module for all $s\in S$ \,\cite[Lemma~1.2.3(a)]{Pcosh}.
 The underlying $R$\+module of any contraadjusted $S$\+module is
contraadjusted for any commutative ring homomorphism $R\rarrow S$
\,\cite[Lemma~1.2.2(a)]{Pcosh}.

 We will see in Example~\ref{contraadjustedness-antilocal-example}
below that contraadjustedness is also an \emph{antilocal} property
of modules over commutative rings.
\end{ex}

 The following proposition is our first result about the behavior
of cotorsion pairs of classes of modules or complexes over commutative
rings with respect to localizations of the rings by their elements.
 In fact, the argument is quite general.

\begin{prop} \label{left-right-ascent-coascent-direct-image-prop}
 Let $(\sA_R\subset\sK_R)_{R\in\R}$ and $(\sB_R\subset\sK_R)_{R\in\R}$
be two systems of classes of modules or complexes over commutative
rings $R\in\R$.
 Assume that $(\sA_R,\sB_R)$ is a cotorsion pair in\/ $\sK_R$ for
every $R\in\R$.
 Then the class\/ $\sA$ satisfies the ascent and direct image conditions
if and only if\/ $\sB_R\subset\sK_R^\cta$ for every $R\in\R$ \emph{and}
the class\/ $\sB$ satisfies the coascent and direct image conditions.

 More generally, for any systems of classes $(\sA_R)_{R\in\R}$ and
$(\sB_R)_{R\in\R}$, \par
\textup{(a)} if a system of classes $(\sA_R\subset\sK_R)_{R\in\R}$
satisfies ascent, then the direct image condition holds for the system
of classes $(\sA_R^{\perp_1}\subset\sK_R)$; \par
\textup{(b)} if the direct image condition holds for a system of
classes $(\sA_R\subset\sK_R)_{R\in\R}$, then the system of classes
$(\sA_R^{\perp_1}\subset\sK_R)$ satisfies coascent; \par
\textup{(c)} if the direct image condition holds for a system of
classes $(\sB_R\subset\sK_R)_{R\in\R}$, then the system of classes
$({}^{\perp_1}\sB_R\subset\sK_R)_{R\in\R}$ satisfies ascent; \par
\textup{(d)} if a system of classes $(\sB_R\subset\sK_R)_{R\in\R}$
satisfies coascent and\/ $\sB_R\subset\sK_R^\cta$ for all $R\in\R$,
then the direct image condition holds for the system of classes
$({}^{\perp_1}\sB_R\subset\nobreak\sK_R)_{R\in\R}$.
\end{prop}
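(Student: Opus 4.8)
The plan is to route everything through the triple of adjoint functors attached to the localization morphism $R\rarrow R[s^{-1}]$, together with the Ext-adjunction Lemma~\ref{categorical-ext-adjunction-lemma}. Fix $R\in\R$ and $s\in R$, write $R'=R[s^{-1}]$, and introduce the localization $\Phi=R'\ot_R{-}\:\sK_R\rarrow\sK_{R'}$, the restriction of scalars $\Psi\:\sK_{R'}\rarrow\sK_R$, and the colocalization $\Gamma=\Hom_R(R',{-})\:\sK_R\rarrow\sK_{R'}$, all three acting termwise when $\sK_R=\bC(R\Modl)$; one has $\Phi\dashv\Psi\dashv\Gamma$. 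The governing asymmetry is exactness: $\Phi$ is exact because $R'$ is a flat $R$\+module, $\Psi$ is obviously exact, but $\Gamma$ is only left exact --- it carries an (admissible) short exact sequence $0\rarrow C\rarrow C''\rarrow C'\rarrow0$ in $\sK_R$ to a short exact sequence in $\sK_{R'}$ as soon as $\Ext^1_R(R[s^{-1}],C)=0$, in particular whenever $C$ is contraadjusted (termwise contraadjusted, in the complex case). I would first prove the "more general" assertions~(a)--(d), and then deduce the equivalence of the first paragraph by specializing to a cotorsion pair, where $\sA_R={}^{\perp_1}\sB_R$ and $\sB_R=\sA_R^{\perp_1}$.

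Assertions~(a) and~(c) are the "exact" cases and use the honest $\Ext$-isomorphism of Lemma~\ref{categorical-ext-adjunction-lemma}(a) for the exact adjoint pair $(\Phi,\Psi)$. For~(a): given $C\in\sA_{R'}^{\perp_1}$, every $A\in\sA_R$ has $\Phi(A)\in\sA_{R'}$ by ascent, so $\Ext^1_{\sK_{R'}}(\Phi(A),C)=0$, and the isomorphism $\Ext^1_{\sK_R}(A,\Psi(C))\simeq\Ext^1_{\sK_{R'}}(\Phi(A),C)$ shows $\Psi(C)\in\sA_R^{\perp_1}$, which is the direct image condition for $\sA_R^{\perp_1}$. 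For~(c) I would argue in the mirror fashion: given $A\in{}^{\perp_1}\sB_R$, every $B\in\sB_{R'}$ has $\Psi(B)\in\sB_R$ by the direct image condition for $\sB$, so $\Ext^1_{\sK_R}(A,\Psi(B))=0$, and the same isomorphism gives $\Ext^1_{\sK_{R'}}(\Phi(A),B)=0$, so $\Phi(A)\in{}^{\perp_1}\sB_{R'}$, which is ascent.

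Assertions~(b) and~(d) bring in the colocalization $\Gamma$, and here only the one-directional $\Ext^1$-monomorphisms of Lemma~\ref{categorical-ext-adjunction-lemma}(c) and~(d) for the adjoint pair $(\Psi,\Gamma)$ are available. For~(b): given $C\in\sA_R^{\perp_1}$, every $A\in\sA_{R'}$ has $\Psi(A)\in\sA_R$ by the direct image condition for $\sA$, so $\Ext^1_{\sK_R}(\Psi(A),C)=0$; since $\Psi$ is exact, part~(c) of the lemma applies unconditionally and yields an injection $\Ext^1_{\sK_{R'}}(A,\Gamma(C))\lrarrow\Ext^1_{\sK_R}(\Psi(A),C)=0$, whence $\Gamma(C)\in\sA_{R'}^{\perp_1}$, which is coascent. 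For~(d): given $A\in{}^{\perp_1}\sB_{R'}$, every $B\in\sB_R$ has $\Gamma(B)\in\sB_{R'}$ by coascent, so $\Ext^1_{\sK_{R'}}(A,\Gamma(B))=0$; the hypothesis $\sB_R\subset\sK_R^\cta$ makes $B$ (termwise) contraadjusted, which is exactly what is needed to apply part~(d) of the lemma, yielding an injection $\Ext^1_{\sK_R}(\Psi(A),B)\lrarrow\Ext^1_{\sK_{R'}}(A,\Gamma(B))=0$, so $\Psi(A)\in{}^{\perp_1}\sB_R$, which is the direct image condition.

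With~(a)--(d) in hand I would assemble the equivalence. If the left class $\sA$ of the cotorsion pair satisfies ascent and the direct image condition, then~(a) gives the direct image condition for $\sB=\sA^{\perp_1}$ and~(b) gives coascent for $\sB$; the remaining point $\sB_R\subset\sK_R^\cta$ I would obtain by pushing a projective object through the direct image condition --- namely, the free module $R'$ over $R'$ (module case), respectively the contractible complex $D_{n,n+1}^\bu(R')$ built from the free $R'$\+module $R'$ for each $n\in\boZ$ (complex case), is a projective object of $\sK_{R'}$, hence lies in $\sA_{R'}={}^{\perp_1}\sB_{R'}$; its restriction of scalars then lies in $\sA_R={}^{\perp_1}\sB_R$, so that $\Ext^1_R(R[s^{-1}],B)=0$ for every $B\in\sB_R$ in the module case, and $\Ext^1_{\bC(R\Modl)}(D_{n,n+1}^\bu(R[s^{-1}]),B^\bu)=0$, hence $\Ext^1_R(R[s^{-1}],B^n)=0$ by Lemma~\ref{disk-complexes-lemma}, for every $B^\bu\in\sB_R$ and $n\in\boZ$ in the complex case --- in either case, every member of $\sB_R$ is (termwise) contraadjusted. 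Conversely, if $\sB_R\subset\sK_R^\cta$ for all $R$ and $\sB$ satisfies coascent and the direct image condition, then~(c) gives ascent for $\sA={}^{\perp_1}\sB$ and~(d) gives the direct image condition for $\sA$. I do not expect a deep obstacle; the real work is the bookkeeping --- keeping straight which of the four $\Ext$\+comparison maps of Lemma~\ref{categorical-ext-adjunction-lemma} is being invoked (an honest isomorphism for $(\Phi,\Psi)$, only a one-sided monomorphism for $(\Psi,\Gamma)$), in which direction it runs, and isolating the single place, part~(d), where contraadjustedness is genuinely forced; one should also verify that the adjunctions and the needed exactness statements pass to complexes, where the termwise contraadjustedness encoded in $\sK_R^\cta$ is precisely what part~(d) of the lemma requires.
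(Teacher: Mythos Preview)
Your proof is correct and follows essentially the same approach as the paper: both route parts~(a) and~(c) through the $\Ext^1$\+isomorphism for the exact adjoint pair $(\Phi,\Psi)$, both route parts~(b) and~(d) through the one\+sided $\Ext^1$\+monomorphisms for the pair $(\Psi,\Gamma)$ with contraadjustedness entering precisely in~(d), and both obtain the inclusion $\sB_R\subset\sK_R^\cta$ from the direct image condition applied to $R[s^{-1}]$ (respectively the disk complex $D_{n,n+1}^\bu(R[s^{-1}])$, via Lemma~\ref{disk-complexes-lemma}). The only differences are cosmetic: the paper proves the contraadjustedness inclusion first and then~(a)--(d), while you reverse the order, and the paper also offers the spectral\+sequence Lemma~\ref{spectral-ext-adjunction-lemma} as an alternative reference whereas you work exclusively with Lemma~\ref{categorical-ext-adjunction-lemma}.
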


 Our proof of
Proposition~\ref{left-right-ascent-coascent-direct-image-prop}
is based on the following module-theoretic version of
the Ext\+adjunction lemma.
 Alternatively, it can be just as well based on
the category-theoretic Lemma~\ref{categorical-ext-adjunction-lemma}.

\begin{lem} \label{spectral-ext-adjunction-lemma}
 Put either\/ $\sK_R=R\Modl$ or\/ $\sK_R=\bC(R\Modl)$ for all
associative rings $R$, and let $R\rarrow S$ be a ring homomorphism.
 Then \par
\textup{(a)} for any modules/complexes $M\in\sK_R$ and $N\in\sK_S$,
and every integer $m\ge0$, there is a natural abelian group map
\begin{equation} \label{ext-tensor-comparison}
 \Ext^m_{\sK_S}(S\ot_RM,\>N)\lrarrow\Ext^m_{\sK_R}(M,N),
\end{equation}
which is an isomorphism whenever\/ $\Tor^R_i(S,M)=0$ for all\/
$1\le i\le m$ (if $M$ is a module) or\/ $\Tor^R_i(S,M^n)=0$
for all\/ $n\in\boZ$ and\/ $1\le i\le m$ (if $M$ is complex); \par
\textup{(b)} for any modules/complexes $M\in\sK_R$ and $N\in\sK_S$,
and every integer $m\ge0$, there is a natural abelian group map
\begin{equation} \label{ext-hom-comparison}
 \Ext^m_{\sK_S}(N,\Hom_R(S,M))\lrarrow\Ext^m_{\sK_R}(N,M),
\end{equation}
which is an isomorphism whenever\/ $\Ext_R^i(S,M)=0$ for all\/
$1\le i\le m$ (if $M$ is a module) or\/ $\Ext_R^i(S,M^n)=0$
for all\/ $n\in\boZ$ and\/ $1\le i\le m$ (if $M$ is a complex); \par
\textup{(c)} for any modules/complexes $M\in\sK_R$ and $N\in\sK_S$,
there is a natural exact sequence of abelian groups
\begin{multline} \label{ext-hom-inflation-restriction}
 0\lrarrow\Ext^1_{\sK_S}(N,\Hom_R(S,M))\lrarrow\Ext^1_{\sK_R}(N,M) \\
 \lrarrow\Hom_{\sK_S}(N,\Ext^1_R(S,M))\lrarrow
 \Ext^2_{\sK_S}(N,\Hom_R(S,M)),
\end{multline}
where, in case $M$ is a complex, the functors\/ $\Hom_R(S,{-})$ and\/
$\Ext^m_R(S,{-})$ are presumed to be applied to $M$ termwise.
 In particular, the map~\eqref{ext-hom-comparison} is always
injective for $m=1$.
 (Similarly, the map~\eqref{ext-tensor-comparison} is
always injective for $m=1$.)
\end{lem}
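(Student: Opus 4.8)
The plan is to derive both halves of Lemma~\ref{spectral-ext-adjunction-lemma} from the classical change\+of\+rings machinery, organized around the two adjunctions attached to $R\rarrow S$: the functor $S\ot_R{-}$ is left adjoint to the restriction of scalars $\pi\:\sK_S\rarrow\sK_R$, while $\Hom_R(S,{-})$ is right adjoint to~$\pi$; all three act termwise on complexes. First I would reduce the complex case to the module case. A cochain complex of $R$\+modules is the same as a $\boZ$\+graded module over the graded ring $\Lambda_R=R[\delta]/(\delta^2)$ with $\deg\delta=1$ and $\delta$ acting as the differential, so that $\bC(R\Modl)$ becomes the category of graded $\Lambda_R$\+modules and $R\rarrow S$ induces a graded homomorphism $\Lambda_R\rarrow\Lambda_S=S\ot_R\Lambda_R$, under which the termwise operations $S\ot_R{-}$ and $\Hom_R(S,{-})$ turn into $\Lambda_S\ot_{\Lambda_R}{-}$ and $\Hom_{\Lambda_R}(\Lambda_S,{-})$. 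As $\Lambda_R$ is free over~$R$ of rank~$2$, a projective (resp.\ injective) resolution of a $\Lambda_R$\+module restricts to a complex of projective (resp.\ injective) $R$\+modules; hence $\Tor^{\Lambda_R}_i(\Lambda_S,M^\bu)$ and $\Ext^i_{\Lambda_R}(\Lambda_S,M^\bu)$ are the complexes $(\Tor^R_i(S,M^n))_n$ and $(\Ext^i_R(S,M^n))_n$, so the complex form of the lemma for $R\rarrow S$ coincides with its module form for $\Lambda_R\rarrow\Lambda_S$. From now on I assume $\sK_R=R\Modl$ and $\sK_S=S\Modl$ (the graded case being identical).

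For part~(a), the map~\eqref{ext-tensor-comparison} is defined by restricting an $S$\+module Yoneda $m$\+extension of $S\ot_RM$ by~$N$ along the exact functor~$\pi$ and then pulling it back along the unit $M\rarrow S\ot_RM$ of the adjunction $(S\ot_R{-})\dashv\pi$; its injectivity for $m=1$ is Lemma~\ref{categorical-ext-adjunction-lemma}(d) applied with $\Phi=S\ot_R{-}$ and $\Psi=\pi$ (the functor~$\pi$ being exact). For the isomorphism assertion I would fix a projective resolution $P^\bu\rarrow M$ over~$R$. The adjunction identifies the complexes $\Hom_S(S\ot_RP^\bu,N)=\Hom_R(P^\bu,N)$, whose cohomology is $\Ext^\bu_R(M,N)$, while $S\ot_RP^\bu$ is a complex of projective $S$\+modules with $H^{-i}(S\ot_RP^\bu)=\Tor^R_i(S,M)$. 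When these vanish for $1\le i\le m$, the complex $S\ot_RP^\bu$ agrees, in cohomological degrees $\ge-m$, with a projective resolution of $S\ot_RM$ over~$S$ (extend the $m$\+th syzygy), so $H^n\Hom_S(S\ot_RP^\bu,N)=\Ext^n_S(S\ot_RM,N)$ for $n\le m$. The two computations together give the isomorphism, which a short diagram chase identifies with~\eqref{ext-tensor-comparison}.

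Parts~(b) and~(c) run dually, using $\pi$ now as the \emph{left} adjoint in $\pi\dashv\Hom_R(S,{-})$. The map~\eqref{ext-hom-comparison} is obtained by restricting an $S$\+module $m$\+extension of $N$ by $\Hom_R(S,M)$ along~$\pi$ and pushing it out along the counit $\Hom_R(S,M)\rarrow M$; its injectivity for $m=1$ is Lemma~\ref{categorical-ext-adjunction-lemma}(b) (again $\Phi=\pi$ is exact). For~(b) I would fix an injective resolution $M\rarrow I^\bu$ over~$R$: the adjunction identifies $\Hom_S(N,\Hom_R(S,I^\bu))=\Hom_R(N,I^\bu)$, with cohomology $\Ext^\bu_R(N,M)$, and $\Hom_R(S,I^\bu)$ is a complex of injective $S$\+modules (since $\Hom_R(S,{-})$ is right adjoint to the exact~$\pi$) with $H^i(\Hom_R(S,I^\bu))=\Ext^i_R(S,M)$; when these vanish for $1\le i\le m$, the complex $\Hom_R(S,I^\bu)$ is, through cohomological degree~$m$, an injective resolution of $\Hom_R(S,M)$ over~$S$, whence $H^n\Hom_S(N,\Hom_R(S,I^\bu))=\Ext^n_S(N,\Hom_R(S,M))$ for $n\le m$, and the isomorphism follows (to be matched with~\eqref{ext-hom-comparison}). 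The four\+term exact sequence~\eqref{ext-hom-inflation-restriction} of part~(c) is the (truncated) low\+degree exact sequence of the Grothendieck spectral sequence $E_2^{p,q}=\Ext^p_S(N,\Ext^q_R(S,M))\Rightarrow\Ext^{p+q}_R(N,M)$ for the composition $\Hom_S(N,{-})\circ\Hom_R(S,{-})=\Hom_R(N,{-})$, which is available because $\Hom_R(S,{-})$ sends injective $R$\+modules to injective $S$\+modules; alternatively it can be extracted from the resolution $I^\bu$ by the usual dimension\+shifting argument.

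The step that will take the most care is precisely the one I isolated first: checking, in the complex case, that the higher Tor and Ext arising from resolving a complex $M^\bu$ are the termwise ones appearing in the hypotheses. The reduction to $\Lambda_R=R[\delta]/(\delta^2)$ is designed to settle this, but it relies on restriction of scalars along $R\subset\Lambda_R$ carrying projective (injective) $\Lambda_R$\+modules to termwise projective (injective) objects, which in turn uses the freeness of $\Lambda_R$ over~$R$. Beyond that, the module\+level arguments are standard change\+of\+rings homological algebra, and the remaining points are the routine diagram chases verifying that the isomorphisms delivered by the resolution comparisons, and by the spectral sequence, coincide with the explicitly described maps~\eqref{ext-tensor-comparison} and~\eqref{ext-hom-comparison}.
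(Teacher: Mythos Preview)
Your argument is correct and follows essentially the same route as the paper: both proofs rest on the change\+of\+rings spectral sequences
\[
 E_2^{p,q}=\Ext^p_{\sK_S}(\Tor^R_q(S,M),N)\Longrightarrow\Ext^{p+q}_{\sK_R}(M,N)
 \quad\text{and}\quad
 E_2^{p,q}=\Ext^p_{\sK_S}(N,\Ext^q_R(S,M))\Longrightarrow\Ext^{p+q}_{\sK_R}(N,M),
\]
with the paper simply invoking them while you unpack the edge\+map isomorphisms in~(a) and~(b) via explicit resolutions. Your reduction of the complex case to the module case through the graded ring $\Lambda_R=R[\delta]/(\delta^2)$ is a genuine addition: the paper only says ``the case of categories of complexes is dealt with similarly,'' leaving the reader to verify that the relevant Tor and Ext over $\bC(R\Modl)$ coincide with the termwise ones, whereas your $\Lambda_R$\+device makes this transparent (the key input being that projective and injective objects of $\bC(R\Modl)$ are termwise projective and injective, which you correctly attribute to the freeness of $\Lambda_R$ over~$R$). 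Both approaches deliver the same content; yours trades brevity for self\+containedness.
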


\begin{proof}
 A simple proof of the isomorphism claim in part~(a) in the case
of module categories can be found in~\cite[Lemma~4.2(a)]{PSl}, and
the isomorphism claim in part~(b) is provable in the dual way.
 The case of categories of complexes is dealt with similarly to
the case of module categories.

 All the assertions of part~(a) and the parenthesized assertion in
part~(c) can be obtained from the spectral sequence
$$
 E_2^{p,q}=\Ext^p_{\sK_S}(\Tor^R_q(S,M),N)
 \Longrightarrow \mathrm{gr}^p\Ext_{\sK_R}^{p+q}(M,N)=E_\infty^{p,q}
$$
with the differentials $d_r^{p,q}\:E_r^{p,q}\rarrow E_r^{p+r,q-r+1}$,
where the functor $\Tor^R_q(S,{-})$ is presumed to be applied to $M$
termwise if $M$ is a complex.

 Similarly, all the assertions of part~(b) and the nonparenthesized
assertions of part~(c) can be obtained from the spectral sequence
$$
  E_2^{p,q}=\Ext^p_{\sK_S}(N,\Ext_R^q(S,M))
 \Longrightarrow \mathrm{gr}^p\Ext_{\sK_R}^{p+q}(N,M)=E_\infty^{p,q}
$$
with the differentials $d_r^{p,q}\:E_r^{p,q}\rarrow E_r^{p+r,q-r+1}$.
\end{proof}

\begin{proof}[Proof of
Proposition~\ref{left-right-ascent-coascent-direct-image-prop}]
 In the context of a system of cotorsion pairs $(\sA_R,\sB_R)$ in
$\sK_R$, where $R$ ranges over the class $\R$, let us first show that
the direct image condition for the class $\sA$ implies the inclusion
$\sB_R\subset\sK_R^\cta$ for all $R\in\R$.
 Here we have to consider two cases, corresponding to whether we are
dealing with modules ($\sK_R=R\Modl$) or with complexes of modules
($\sK_R=\bC(R\Modl)$).

 In the case of modules, $\sK_R=R\Modl$, we notice that all projective
$R$\+modules belong to $\sA_R$ for all $R\in\R$, since $(\sA_R,\sB_R)$
is a cotorsion pair in\/~$\sK_R$.
 If the class $\sA$ satisfies the direct image condition, then it
follows that $R[s^{-1}]\in\sA_R$ for all $R\in\R$ and $s\in R$,
hence $\sB_R\subset\sK_R^\cta$.
 It also follows that all very flat $R$\+modules belong to~$\sA_R$.
 In this sense, one can say that the class of very flat modules is
the smallest left class $\sA$ for all systems of cotorsion pairs
$(\sA_R,\sB_R)_{R\in\R}$ in $\sK_R=R\Modl$ such that the direct image
condition is satisfied for the class~$\sA$.

 In the case of complexes, $\sK_R=\bC(R\Modl)$, the argument is
essentially the same.
 All the projective objects of the category $\sK_R$ belong to $\sA_R$,
since $(\sA_R,\sB_R)$ is a cotorsion pair in~$\sK_R$.
 In particular, the acyclic two-term complexes of free $R$\+modules
$\dotsb\rarrow0\rarrow R\overset=\rarrow R\rarrow0\rarrow\dotsb$ are
projective objects in $\bC(R\Modl)$ (by
Lemma~\ref{disk-complexes-lemma}), so they have to belong to~$\sA_R$.
 If the class $\sA_R$ satisfies the direct image condition, then it
follows that the two-term complexes
$\dotsb\rarrow0\rarrow R[s^{-1}]\overset=\rarrow R[s^{-1}]\rarrow0
\rarrow\dotsb$ belong to $\sA_R$ for all $R\in\R$ and $s\in R$.
 Applying Lemma~\ref{disk-complexes-lemma} again (in $\sE=R\Modl$),
one concludes that $\sB_R\subset\bC(R\Modl^\cta)=\sK_R^\cta$.

 It remains to prove the assertions~(a\+-d).
 Parts~(a) and~(c) follow from the natural isomorphism
\begin{equation} \label{ext-tensor-isomorphism}
 \Ext^m_{\sK_S}(S\ot_RM,\>N)\simeq\Ext^m_{\sK_R}(M,N),
\end{equation}
which holds for all commutative ring homomorphisms $R\rarrow S$
making $S$ a flat $R$\+module, all $R$\+modules/complexes $M$, all
$S$\+modules/complexes $N$, and all $m\ge0$ by~\cite[Lemma~4.1(a)]{PSl}
or Lemma~\ref{spectral-ext-adjunction-lemma}(a) or
Lemma~\ref{categorical-ext-adjunction-lemma}(a).
 In the situation at hand, one needs
to use~\eqref{ext-tensor-isomorphism} for $S=R[s^{-1}]$ and $m=1$.

 In part~(b), it is a bit easier to prove that the system of classes
$\sA_R^{\perp_1}\cap\sK_R^\cta$ satisfies the coascent (in the spirit
of the formulation of the coascent condition above, which was stated
for subclasses of $\sK_R^\cta$ only).
 Then both part~(b) restricted to $\sK_R^\cta$ and part~(d) follow
from the natural isomorphism
\begin{equation} \label{ext-hom-isomorphism}
 \Ext^m_{\sK_S}(N,\Hom_R(S,M))\simeq\Ext^m_{\sK_R}(N,M),
\end{equation}
which holds for all ring homomorphisms $R\rarrow S$, all
$R$\+modules or complexes $M$ such that $\Ext_R^i(S,M)=0$ for
$1\le i\le m$ or $\Ext_R^i(S,M^n)=0$ for all $n\in\boZ$ and
$1\le i\le m$, all $S$\+modules/complexes $N$, and all $m\ge0$.
 This is a dual version of~\cite[Lemma~4.2(a)]{PSl}, or
Lemma~\ref{spectral-ext-adjunction-lemma}(b).
 For $m=1$, one can also apply
Lemma~\ref{categorical-ext-adjunction-lemma}(e).
 In the situation at hand, one needs
to use~\eqref{ext-hom-isomorphism} for $S=R[s^{-1}]$ and $m=1$.

 A full proof of part~(b) is based on
Lemma~\ref{spectral-ext-adjunction-lemma}(c) for $S=R[s^{-1}]$,
which tells that $\Ext_{\sK_S}^1(N,\Hom_R(S,M))=0$ whenever
$\Ext_{\sK_R}^1(N,M)=0$.
 Alternatively, one can use
Lemma~\ref{categorical-ext-adjunction-lemma}(b) to the same effect.
\end{proof}

 The next lemma is the dual version of
Lemma~\ref{ascent+direct-image-imply-descent}.

\begin{lem} \label{coascent+direct-image-imply-codescent}
 Assume that a class\/ $\sL=(\sL_R\subset\sK_R^\cta)_{R\in\R}$
satisfies the coascent and direct image conditions.
 Assume further that, for every ring $R\in\R$, the full subcategory\/
$\sL_R$ is closed under finite direct sums and the cokernels of
admissible monomorphisms in\/ $\sK_R^\cta$ (equivalently, under
the cokernels of all monomorphisms in\/~$\sK_R$).
 Then the class\/ $\sL$ also satisfies codescent;
so, it is very colocal.
\end{lem}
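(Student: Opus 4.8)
The plan is to dualize the proof of Lemma~\ref{ascent+direct-image-imply-descent}, replacing the \v Cech coresolution~\eqref{cech-coresolution} of~$M$ by the \v Cech \emph{resolution} of~$M$ obtained by applying the functor $\Hom_R({-},M)$ (termwise, if $M$ is a complex) to the augmented \v Cech complex of the ring~$R$ itself,
\begin{equation*}
 0\lrarrow R\lrarrow\bigoplus_{i=1}^d R[s_i^{-1}]\lrarrow
 \bigoplus_{1\le i<j\le d}R[s_i^{-1},s_j^{-1}]\lrarrow\dotsb\lrarrow
 R[s_1^{-1},\dotsc,s_d^{-1}]\lrarrow0,
\end{equation*}
which is a finite exact sequence of $R$\+modules (one sees this by localizing at prime ideals, exactly as for~\eqref{cech-coresolution}). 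The first step is to record that each of the modules $R[s_{i_0}^{-1},\dotsc,s_{i_k}^{-1}]=R[(s_{i_0}\cdots s_{i_k})^{-1}]$, and hence every syzygy module of this \v Cech complex, is a very flat $R$\+module: each $R[(s_{i_0}\cdots s_{i_k})^{-1}]$ is one of the generators of the very flat cotorsion pair, which is hereditary, so the class of very flat modules is closed under kernels of epimorphisms.

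Since $M$ lies in $\sK_R^\cta$, i.e., $M$ is a contraadjusted $R$\+module or a complex of contraadjusted $R$\+modules, one has $\Ext^1_R(V,M^n)=0$ for every very flat $R$\+module~$V$ and every degree~$n$. Breaking the \v Cech complex above into (admissible) short exact sequences of very flat modules and applying $\Hom_R({-},M)$ termwise therefore produces a finite exact sequence
\begin{equation*}
 0\lrarrow\Hom_R(R[s_1^{-1},\dotsc,s_d^{-1}],M)\lrarrow\dotsb\lrarrow
 \bigoplus_{i=1}^d\Hom_R(R[s_i^{-1}],M)\lrarrow M\lrarrow0
\end{equation*}
in $\sK_R$. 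I would then verify that this is in fact an exact complex in $\sK_R^\cta$: each term $\bigoplus\Hom_R(R[s_{i_0}^{-1},\dotsc,s_{i_k}^{-1}],M)$ is a finite direct sum of colocalizations of~$M$, hence contraadjusted (or a complex of contraadjusted modules) by Example~\ref{contraadjustedness-colocal-example}, and every syzygy object is contraadjusted because contraadjustedness passes to cokernels of monomorphisms between contraadjusted modules, using that $R[s^{-1}]$ has projective dimension~$\le1$, so that $\Ext^2_R(R[s^{-1}],{-})=0$.

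The next step is to show that every term $\bigoplus\Hom_R(R[s_{i_0}^{-1},\dotsc,s_{i_k}^{-1}],M)$ of this resolution, except possibly~$M$ itself, belongs to~$\sL_R$. For this one observes that, along the tower of localizations $R\rarrow R[s_{i_0}^{-1}]\rarrow R[s_{i_0}^{-1},s_{i_1}^{-1}]\rarrow\dotsb$, the composite of the right adjoints to the restriction-of-scalars functors identifies $\Hom_R(R[s_{i_0}^{-1},\dotsc,s_{i_k}^{-1}],M)$ with the iterated colocalization of~$M$ along the tower, whose innermost layer is $\Hom_R(R[s_{i_0}^{-1}],M)\in\sL_{R[s_{i_0}^{-1}]}$ (which holds by hypothesis, $i_0$ being one of the indices~$j$). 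Applying coascent $k$ times puts this object in $\sL_{R[s_{i_0}^{-1},\dotsc,s_{i_k}^{-1}]}$, and applying the direct image condition along the same tower brings it down to~$\sL_R$; finite direct sums of such objects again lie in $\sL_R$ by assumption.

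Finally I would run the dual of the inductive endgame of the proof of Lemma~\ref{ascent+direct-image-imply-descent}: split the \v Cech resolution of~$M$ into admissible short exact sequences in $\sK_R^\cta$ and move from its leftmost term $\Hom_R(R[s_1^{-1},\dotsc,s_d^{-1}],M)\in\sL_R$ step by step toward~$M$, using at each step that $\sL_R$ is closed under cokernels of admissible monomorphisms in $\sK_R^\cta$; this forces every successive syzygy object, and ultimately~$M$, to lie in~$\sL_R$. Hence $\sL$ satisfies codescent and is therefore very colocal. The step demanding the most care, and the one where the restriction to contraadjusted modules is genuinely used, is the verification that the \v Cech resolution of~$M$ is exact and is a complex in $\sK_R^\cta$; the rest is a fairly mechanical transcription of the proof of Lemma~\ref{ascent+direct-image-imply-descent}, the only mildly delicate bookkeeping being the identification of the iterated colocalizations, which causes no real trouble since $R[s_{i_0}^{-1},\dotsc,s_{i_k}^{-1}]$ is literally $R[(s_{i_0}\cdots s_{i_k})^{-1}]$.
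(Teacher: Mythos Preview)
Your proposal is correct and follows essentially the same approach as the paper: apply $\Hom_R({-},M)$ to the \v Cech coresolution of~$R$, use very flatness of the terms and syzygies together with contraadjustedness of~$M$ to obtain an exact sequence in~$\sK_R^\cta$, then invoke coascent and direct image to place all terms except~$M$ in~$\sL_R$, and finish by induction using closure under cokernels of admissible monomorphisms. The only cosmetic difference is that the paper shows the syzygies of the resulting resolution are contraadjusted by noting they have the form $\Hom_R(F,M)$ with $F$ very flat, whereas you deduce it from closure of contraadjusted modules under epimorphic images; both are valid.
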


\begin{proof}
 The full subcategory $\sK_R^\cta$ is closed under epimorphic images in
the abelian category $\sK_R$ by Lemma~\ref{generated-by-projdim1-lemma}.
 Hence, for a class of objects $\sL_R\subset\sK_R^\cta$ to be closed
under the cokernels of admissible monomorphisms in $\sK_R^\cta$ is
equivalent to it being closed under the cokernels of all monomorphisms
in~$\sK_R$.

 To prove the codescent under the assumptions of the lemma, consider
the \v Cech coresolution~\eqref{cech-coresolution} for the free
module $M=R$,
\begin{multline} \label{free-module-cech-coresolution}
 0\lrarrow R\lrarrow\bigoplus\nolimits_{i=1}^d R[s_i^{-1}]\lrarrow
 \bigoplus\nolimits_{1\le i<j\le d}R[s_i^{-1},s_j^{-1}] \lrarrow\dotsb
 \\ \lrarrow\bigoplus\nolimits_{1\le i_1<\dotsb<i_k\le d}
 R[s_{i_1}^{-1},\dotsc,s_{i_k}^{-1}]\lrarrow\dotsb\lrarrow
 R[s_1^{-1},\dotsc,s_d^{-1}]\lrarrow0.
\end{multline}
 Now \eqref{free-module-cech-coresolution}~is a finite exact sequence
of very flat $R$\+modules.
 As the class of very flat $R$\+modules is closed under the kernels of
epimorphisms in $R\Modl$ (see
Example~\ref{very-flatness-local-example}), moving by induction from
the rightmost end of~\eqref{free-module-cech-coresolution} to its
leftmost end one shows that all the modules of cocycles in this exact
sequence are very flat.

 Therefore, for any $M\in\sK_R^\cta$, we have a finite exact sequence
of $R$\+modules or complexes of $R$\+modules obtained by applying
the functor $\Hom_R({-},M)$ to~\eqref{free-module-cech-coresolution}
\begin{multline} \label{hom-cech-resolution}
 0\lrarrow\Hom_R(R[s_1^{-1},\dotsc,s_d^{-1}],M)\lrarrow\dotsb\lrarrow \\
 \bigoplus\nolimits_{1\le i<j\le d}\Hom_R(R[s_i^{-1},s_j^{-1}],M)
 \lrarrow\bigoplus\nolimits_{i=1}^d\Hom_R(R[s_i^{-1}],M) \\
 \lrarrow M\lrarrow0.
\end{multline}
 One can see that~\eqref{hom-cech-resolution} is an exact sequence in
the exact category $\sK_R^\cta$, as the $R$\+module $\Hom_R(F,C)$ is
contraadjusted for any very flat $R$\+module $F$ and contraadjusted
$R$\+module~$C$ \,\cite[Lemma~1.2.1(b)]{Pcosh}.

 The rest of the argument is dual to the proof of
Lemma~\ref{ascent+direct-image-imply-descent}.
 If $\Hom_R(R[s_j^{-1}],M)\in\sL_{R[s_j^{-1}]}$ for every $1\le j\le d$
and the class $\sL$ satisfies the coascent and direct image conditions,
then all the $R$\+modules or complexes of $R$\+modules
$\Hom_R(R[s_{i_1}^{-1},\dotsc,s_{i_k}^{-1}],M)$ appearing
in~\eqref{hom-cech-resolution}, except perhaps $M$ itself, belong
to~$\sL_R$.
 Hence all the terms of the exact sequence~\eqref{hom-cech-resolution},
except perhaps the rightmost one, belong to~$\sL_R$.
 Finally, using the assumption that the class $\sL_R$ is closed under
the cokernels of monomorphisms in $\sK_R$ and moving by induction
from the leftmost end of the sequence to its rightmost end, one
proves that $M\in\sL_R$.
\end{proof}

 In the course of the proof of
Lemma~\ref{coascent+direct-image-imply-codescent}, we have also
proved the following useful lemma.

\begin{lem} \label{colocalization-admissible-epimorphism-lemma}
 Let $R$ be a commutative ring and $s_1$,~\dots, $s_d\in R$ be
a collection of elements generating the unit ideal in~$R$.
 Then, for any module or complex $M\in\sK_R^\cta$, the natural map
of $R$\+modules/complexes
$$
 \bigoplus\nolimits_{j=1}^d\Hom_R(R[s_j^{-1}],M)\lrarrow M
$$
is an admissible epimorphism in\/ $\sK_R^\cta$ (i.~e., an epimorphism
in\/ $\sK_R$ whose kernel computed in\/ $\sK_R$ belongs
to~$\sK_R^\cta$).
\end{lem}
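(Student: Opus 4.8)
The plan is to extract the statement directly from the computation already carried out in the proof of Lemma~\ref{coascent+direct-image-imply-codescent}. First I would recall the \v Cech coresolution~\eqref{free-module-cech-coresolution} of the free module $R$ associated with the elements $s_1,\dots,s_d$; this is a finite exact sequence in $R\Modl$ all of whose terms $R[s_{i_1}^{-1},\dots,s_{i_k}^{-1}]$ are very flat $R$\+modules. Moving by descending induction from the rightmost term and using that the (very flat, contraadjusted) cotorsion pair is hereditary, hence that the class of very flat $R$\+modules is closed under kernels of epimorphisms in $R\Modl$ (Example~\ref{very-flatness-local-example}), one sees that every module of cocycles of~\eqref{free-module-cech-coresolution} is also very flat.

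Next I would apply the functor $\Hom_R({-},M)$ to~\eqref{free-module-cech-coresolution} (termwise, if $M$ is a complex), obtaining the finite sequence~\eqref{hom-cech-resolution}. Breaking~\eqref{free-module-cech-coresolution} into short exact sequences $0\rarrow Z'\rarrow C\rarrow Z''\rarrow0$ of very flat $R$\+modules and applying $\Hom_R({-},M)$, the connecting term $\Ext^1_R(Z'',M)$ vanishes, because $Z''$ is very flat and $M$ is (termwise) contraadjusted. Therefore~\eqref{hom-cech-resolution} is exact in $\sK_R$, and each of its modules (or complexes) of cocycles is of the form $\Hom_R(F,M)$ with $F$ a very flat $R$\+module, hence lies in $\sK_R^\cta$ by \cite[Lemma~1.2.1(b)]{Pcosh} (applied termwise). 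In other words, \eqref{hom-cech-resolution} is an exact sequence in the exact category $\sK_R^\cta$.

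Finally, the two rightmost terms of~\eqref{hom-cech-resolution} are precisely $\bigoplus_{j=1}^d\Hom_R(R[s_j^{-1}],M)\rarrow M\rarrow0$, so the map in question is an epimorphism in $\sK_R$ whose kernel, computed in $\sK_R$, equals the relevant module/complex of cocycles of~\eqref{hom-cech-resolution} and hence belongs to $\sK_R^\cta$. Since $M$ and $\bigoplus_{j=1}^d\Hom_R(R[s_j^{-1}],M)$ also belong to $\sK_R^\cta$, and this full subcategory is closed under extensions in $\sK_R$, the short exact sequence $0\rarrow\ker\rarrow\bigoplus_{j=1}^d\Hom_R(R[s_j^{-1}],M)\rarrow M\rarrow0$ is admissible in $\sK_R^\cta$; that is, the displayed map is an admissible epimorphism in $\sK_R^\cta$. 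The only point requiring real care is the exactness of~\eqref{hom-cech-resolution}, i.e.\ the vanishing of the relevant $\Ext^1$ groups, which is exactly where the hypothesis $M\in\sK_R^\cta$ (contraadjustedness, termwise in the complex case) enters; everything else is bookkeeping parallel to the proof of Lemma~\ref{coascent+direct-image-imply-codescent}.
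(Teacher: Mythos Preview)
Your proposal is correct and follows essentially the same approach as the paper: the paper's own proof simply points back to the discussion of the \v Cech resolution~\eqref{hom-cech-resolution} in the proof of Lemma~\ref{coascent+direct-image-imply-codescent}, which is exactly the argument you have written out in detail.
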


\begin{proof}
 This was explained in the discussion of the \v Cech
resolution~\eqref{hom-cech-resolution}.
\end{proof}

\begin{ex} \label{injectivity-colocal-example}
 Injectivity of modules over commutative rings is a very colocal
property (presuming contraadjustedness in the codescent, i.~e.,
within the class $\sK_R^\cta=R\Modl^\cta$, as per the definition above).
 Indeed, one can easily see that the $S$\+module $\Hom_R(S,I)$ is
injective for any injective $R$\+module $I$ and any ring homomorphism
$R\rarrow S$; so the coascent is satisfied.
 Furthermore, the $R$\+module $J$ is injective for any commutative ring
homomorphism $R\rarrow S$ making $S$ a flat $R$\+module and any
injective $S$\+module~$J$; so the direct image condition holds as well.

 One could also refer to
Proposition~\ref{left-right-ascent-coascent-direct-image-prop}
to the effect that the class of injective modules satisfies the coascent
and direct image conditions since the class of all modules satisfies
the ascent and direct image.
 As the class of injective $R$\+modules is closed under (finite
or infinite) products and cokernels of monomorphisms,
Lemma~\ref{coascent+direct-image-imply-codescent} tells that
the codescent of injectivity holds.

 We will see in Example~\ref{injectivity-strongly-antilocal-example}
below that injectivity is also a \emph{strongly antilocal} property
of modules over commutative rings.
\end{ex}

\begin{ex} \label{cotorsion-colocal-example}
 The class of cotorsion modules over commutative rings is very
colocal (presuming contraadjustedness in the codescent).
 Indeed, the $S$\+module $\Hom_R(S,C)$ is cotorsion for any cotorsion
$R$\+module $C$ and any commutative ring homomorphism $R\rarrow S$
making $S$ a flat $R$\+module~\cite[Lemma~1.3.5(a)]{Pcosh}.
 The underlying $R$\+module of any cotorsion $R$\+module is cotorsion
for any ring homomorphism $R\rarrow S$ \,\cite[Lemma~1.3.4(a)]{Pcosh}.

 One could also refer to
Proposition~\ref{left-right-ascent-coascent-direct-image-prop} and
Example~\ref{flatness-local-example}, to the effect that the class of
cotorsion modules satisfies the coascent and direct image conditions
since the class of flat modules satisfies the ascent and direct image.
 As the class of cotorsion $R$\+modules is closed under (finite
or infinite) products and cokernels of monomorphisms (by
Lemma~\ref{hereditary-lemma}(a)),
Lemma~\ref{coascent+direct-image-imply-codescent} tells that
the class of cotorsion modules satisfies codescent.

 We will see in Example~\ref{cotorsion-antilocal-example} below that
the class of cotorsion modules over commutative rings is also antilocal.
\end{ex}

\begin{ex} \label{homotopy-injectivity-of-injectives-colocal-example}
 The class of homotopy injective complexes of injective modules
over commutative rings is very colocal (presuming termwise
contraadjustedness in the codescent, i.~e., within the class
$\sK_R^\cta=\bC(R\Modl^\cta)$, as per the definition above).

 Indeed, it is easy to see that the complex of $S$\+modules
$\Hom_R(S,I^\bu)$ is homotopy injective for any homotopy injective
complex of $R$\+modules $I^\bu$ and any ring homomorphism $R\rarrow S$.
 Furthermore, the complex of $R$\+modules $J^\bu$ is homotopy
injective for any commutative ring homomorphism $R\rarrow S$ making
$S$ a flat $R$\+module and any homotopy injective complex of
$S$\+modules~$J^\bu$.
 Together with the similar properties of injective modules mentioned
in Example~\ref{injectivity-colocal-example}, this implies the coascent
and direct image conditions for homotopy injective complexes of
injective modules.

 Alternatively, one notice that, in view of
Lemmas~\ref{disk-complexes-lemma} and~\ref{ext-homotopy-hom-lemma},
the class of homotopy injective complexes of injective modules
is precisely the right $\Ext^1$\+orthogonal class to the class of
acyclic complexes in $\bC(R\Modl)$ (see, e.~g., \cite{EJX},
\cite[Theorem~2.3.13]{Hov-book}, \cite[Example~3.2]{Hov},
\cite[Proposition~1.3.5(2)]{Bec}, or~\cite[Theorem~8.4]{PS4}).
 Since the class of acyclic complexes satisfies the ascent and
direct image conditions (cf.\
Example~\ref{homotopy-injectivity-antilocal-example} below),
Proposition~\ref{left-right-ascent-coascent-direct-image-prop} tells
that the class of homotopy injective complexes of injective modules
satisfies the coascent and direct image.

 As the class of homotopy injective complexes of injective modules
is (obviously) closed under products and cokernels of monomorphisms,
Lemma~\ref{coascent+direct-image-imply-codescent} tells that
the codescent holds for this class.
\end{ex}

\Section{Local and Antilocal Classes in Cotorsion Pairs}
\label{local-antilocal-secn}

 As in the previous two sections, we suppose to have chosen a class of
commutative rings $\R$ that is stable under localizations with respect
to elements, and denote by $\sK_R=R\Modl$ or $\sK_R=\bC(R\Modl)$
the abelian category of modules or complexes of modules over
a commutative ring~$R$.

 A system of classes of modules or complexes
$(\sF_R\subset\sK_R)_{R\in\R}$ is said to be \emph{antilocal} if it
satisfies the following condition.
\begin{description}
\item[Antilocality] For any ring $R\in\R$ and any finite collection
of elements $s_1$,~\dots, $s_d\in R$ generating the unit ideal of $R$,
the class $\sF_R$ can be described as follows.
 A module or complex $M\in\sK_R$ belongs to $\sF_R$ \emph{if and only
if} $M$ is a direct summand of a module or complex $F\in\sK_R$
having a finite filtration by $R$\+submodules or subcomplexes of
$R$\+submodules $0=F_0\subset F_1\subset F_2\subset\dotsb\subset
F_{N-1}\subset F_N=F$ with the following property.
 For every $1\le i\le N$ there exists $1\le j\le d$ for which
the successive quotient $F_i/F_{i-1}$, viewed as an $R$\+module or
a complex of $R$\+modules, is obtained by restriction of scalars
from an $R[s_j^{-1}]$\+module or complex of $R[s_j^{-1}]$\+modules
belonging to~$\sF_{R[s_j^{-1}]}$.
\end{description}

 Notice that the ``if'' implication in the definition of antilocality
contains in itself the assertion that the restriction of scalars with
respect to the localization map $R\rarrow R[s_j^{-1}]$ takes
modules/complexes from $\sF_{R[s_j^{-1}]}$ to modules/complexes
from~$\sF_R$.
 Thus any antilocal class satisfies the direct image condition from
Section~\ref{locality-first-examples-secn}.

\begin{ex} \label{all-modules-not-antilocal}
 The class of all $R$\+modules (or all complexes of $R$\+modules) is
usually \emph{not} antilocal.
 As in Example~\ref{naive-codescent-fails}, let $R=\boZ$ be the ring
of integers, or alternatively $R=k[x]$ the ring of polynomials in one
variable over a field~$k$; and let $s_1$, $s_2\in R$ be two coprime
noninvertible elements.
 Then for any $R[s_j]$\+module $N$ (where $j=1$ or~$2$) one has
$\Hom_R(N,R)=0$.
 It follows that $\Hom_R(F,R)=0$ for any $R$\+module $F$ finitely
filtered by $R$\+modules coming from $R[s_j]$\+modules via
the restriction of scalars.
 Thus the free $R$\+module $M=R$ is \emph{not} a direct summand of
an $R$\+module admitting such a filtration.
\end{ex}

 Furthermore, let us say that a system of modules or complexes
$(\sD_R\subset\sK_R)_{R\in\R}$ is \emph{strongly antilocal} if
the following condition is satisfied.
\begin{description}
\item[Strong antilocality] For any ring $R\in\R$ and any finite
collection of elements $s_1$,~\dots, $s_d\in R$ generating the unit
ideal of $R$, the class $\sD_R$ can be described as follows.
 A module or complex $M\in\sK_R$ belongs to $\sD_R$ \emph{if and only
if} $M$ is a direct summand of a module or complex $D\in\sK_R$
isomorphic to a finite direct sum $D=\bigoplus_{j=1}^d D_j$, where
the $R$\+module or complex of $R$\+modules $D_j$ is obtained by
restriction of scalars from an $R[s_j^{-1}]$\+module or complex of
$R[s_j^{-1}]$\+modules belonging to~$\sD_{R[s_j^{-1}]}$.
\end{description}

 The following theorem is the first main result of this paper.
 In its formulation, it is presumed that the exact category structure
on $\sE_R$ is inherited from the abelian exact category structure
of~$\sK_R$.

\begin{thm} \label{locality-antilocality-theorem}
 Let $(\sE_R\subset\sK_R)_{R\in\R}$ be a system of classes of modules
or complexes such that, for every $R\in\R$, the class\/ $\sE_R$ is
closed under extensions and direct summands in\/~$\sK_R$.
 Assume further that the system of classes\/ $\sE$ is very local.
 Let $(\sA_R\subset\sE_R)_{R\in\R}$ and $(\sB_R\subset\sE_R)_{R\in\R}$
be two systems of classes such that, for every $R\in\R$, the pair
of classes $(\sA_R,\sB_R)$ is a hereditary complete cotorsion pair
in the exact category\/~$\sE_R$.
 Then the following implications hold:
\begin{enumerate}
\renewcommand{\theenumi}{\alph{enumi}}
\item if the class\/ $\sB$ is antilocal, then the class\/ $\sA$ is
local;
\item if the class\/ $\sA$ is very local, then the class\/ $\sB$ is
antilocal;
\item if the class\/ $\sA$ is very local, then the system of classes\/
$\sA\cap\sB=(\sA_R\cap\sB_R)_{R\in\R}$ is strongly antilocal.
\end{enumerate}
\end{thm}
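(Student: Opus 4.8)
The plan is to obtain part~(a) from the Ext--tensor adjunction, to reduce part~(b) to constructing approximation sequences for the pair $(\sA_R,\sF_R)$ à la \cite[Chapter~4]{Pcosh}, and to deduce part~(c) from that same construction. Throughout, write $\sF_R\subseteq\sK_R$ for the class described by the antilocality condition and $C|_R$ for the restriction of scalars of a module/complex $C$ over an (iterated) localization of $R$.

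For part~(a): every antilocal class satisfies the direct image condition, so $\sB$ does. Ascent for $\sA$ follows because, for $M\in\sA_R$ and $B\in\sB_{R[s^{-1}]}$, one has $M[s^{-1}]\in\sE_{R[s^{-1}]}$ (ascent for $\sE$) and $\Ext^1_{\sK_{R[s^{-1}]}}(M[s^{-1}],B)\simeq\Ext^1_{\sK_R}(M,B|_R)=0$ by flatness of $R[s^{-1}]$ (formula~\eqref{ext-tensor-isomorphism}), the right-hand group vanishing since $B|_R\in\sB_R$ by direct image and $M\in\sA_R$. Descent for $\sA$ follows because, if $M[s_j^{-1}]\in\sA_{R[s_j^{-1}]}$ for all $j$, then $M\in\sE_R$ by descent for $\sE$, and for $B\in\sB_R$ antilocality writes $B$ as a direct summand of an object finitely filtered by objects $B_j|_R$ with $B_j\in\sB_{R[s_j^{-1}]}$; since vanishing of $\Ext^1_{\sK_R}(M,{-})$ is inherited under extensions and direct summands, it remains to note $\Ext^1_{\sK_R}(M,B_j|_R)\simeq\Ext^1_{\sK_{R[s_j^{-1}]}}(M[s_j^{-1}],B_j)=0$, whence $M\in{}^{\perp_1}\sB_R\cap\sE_R=\sA_R$.

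For part~(b), I would first check $\sF_R\subseteq\sB_R$: by Proposition~\ref{left-right-ascent-coascent-direct-image-prop}(a) (whose proof, resting only on the Ext--tensor isomorphism, goes through inside the categories $\sE_R$), ascent for $\sA$ — available because $\sA$ is very local — yields the direct image condition for $\sB$, so $B_j|_R\in\sB_R$ for each $B_j\in\sB_{R[s_j^{-1}]}$; as $\sB_R$ is closed under extensions and direct summands, $\sF_R\subseteq\sB_R$. For the converse, note that $\sA_R$ and $\sF_R$ are $\Ext^1$-orthogonal in $\sE_R$ and both closed under direct summands, so by Lemma~\ref{direct-summand-closure-is-complete-cotorsion} (with the right class taken to be the subclass $\sF_R^{\circ}$ of honest finite filtrations, whose summand-closure is $\sF_R$) and Salce's Lemma~\ref{salce-lemma}, it is enough to produce, for every $E\in\sE_R$, a special preenvelope sequence $0\to E\to F\to A\to0$ in $\sE_R$ with $F\in\sF_R^{\circ}$ and $A\in\sA_R$ (the companion precover sequences then follow, or are provided directly by the construction); then $(\sA_R,\sF_R)$ is a complete cotorsion pair in $\sE_R$, and having the same left class as $(\sA_R,\sB_R)$ it satisfies $\sF_R=\sA_R^{\perp_1}=\sB_R$. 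The object $F$ is to be built by the self-dual construction of \cite[Section~4.1]{Pcosh}: for each nonempty $\alpha\subseteq\{1,\dots,d\}$ set $R_\alpha=R\bigl[{\textstyle\prod_{j\in\alpha}s_j^{-1}}\bigr]\in\R$, note $E[s_\alpha^{-1}]\in\sE_{R_\alpha}$ by iterated ascent for $\sE$, choose a special $\sB_{R_\alpha}$-preenvelope $0\to E[s_\alpha^{-1}]\to\mathcal B_\alpha\to\mathcal A_\alpha\to0$ in $\sE_{R_\alpha}$, and splice these along the \v Cech bicomplex of the covering into a single $F\supseteq E$; this $F$ comes out finitely filtered by the restrictions $\mathcal B_\alpha|_R$ (so $F\in\sF_R^{\circ}$, since $\mathcal B_\alpha|_R$ is the restriction of $\mathcal B_\alpha|_{R_j}\in\sB_{R_j}$ for any $j\in\alpha$), while $F/E$ comes out finitely filtered by the restrictions $\mathcal A_\alpha|_R$, each of which lies in $\sA_R$ because $\sA$ is very local; hence $F/E\in\sA_R$, as $\sA_R$ is closed under extensions. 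Verifying that the \v Cech splicing really produces such an $F$ — with the cokernel $F/E$ landing in the left class — is the technical core; it is done by induction on $d$ (each inductive step enlarging the preenvelope one open at a time by a pushout), exactly as in \cite[Chapter~4]{Pcosh}, and I expect this to be the main obstacle.

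For part~(c), assume $\sA$ very local, so the direct image conditions for both $\sA$ and $\sB$ hold. The ``if'' direction is formal: a finite direct sum $\bigoplus_{j=1}^d D_j|_R$ with $D_j\in\sA_{R[s_j^{-1}]}\cap\sB_{R[s_j^{-1}]}$ lies in $\sA_R\cap\sB_R$, both classes being closed under restriction of scalars along the localizations, under finite direct sums, and under direct summands. For the ``only if'' direction — which piggybacks on part~(b) and needs no further induction — let $M\in\sA_R\cap\sB_R$ and run the construction of part~(b) with $E=M$, obtaining $0\to M\to F\to A\to0$ with $A\in\sA_R$ and a finite filtration $0=F_0\subset F_1\subset\dots\subset F_N=F$ with $F_i/F_{i-1}\cong\mathcal B_{\alpha_i}|_R$. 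Because $M\in\sA_R$, iterated ascent gives $M[s_{\alpha_i}^{-1}]\in\sA_{R_{\alpha_i}}$, so $\mathcal B_{\alpha_i}\in\sA_{R_{\alpha_i}}\cap\sB_{R_{\alpha_i}}$ (as $\sA_{R_{\alpha_i}}$ is closed under extensions), and hence every $\mathcal B_{\alpha_i}|_R\in\sA_R\cap\sB_R$; since $\sB_R$ is closed under extensions, all the $F_i$ lie in $\sB_R$, so each step $0\to F_{i-1}\to F_i\to\mathcal B_{\alpha_i}|_R\to0$ splits (as $\mathcal B_{\alpha_i}|_R\in\sA_R$ and $F_{i-1}\in\sB_R$), giving $F\cong\bigoplus_{i=1}^N\mathcal B_{\alpha_i}|_R$. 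Moreover $0\to M\to F\to A\to0$ splits, since $M\in\sB_R$ and $A\in\sA_R$; thus $M$ is a direct summand of $\bigoplus_i\mathcal B_{\alpha_i}|_R$. Finally, fixing $j_i\in\alpha_i$ and restricting $\mathcal B_{\alpha_i}$ first along $R_{j_i}\to R_{\alpha_i}$ gives $\mathcal B_{\alpha_i}|_{R_{j_i}}\in\sA_{R_{j_i}}\cap\sB_{R_{j_i}}$ (direct images over the base $R_{j_i}$), so regrouping the summands by the value of $j_i$ and using closure of $\sA_{R_j}\cap\sB_{R_j}$ under finite direct sums rewrites $\bigoplus_i\mathcal B_{\alpha_i}|_R$ as $\bigoplus_{j=1}^d D_j|_R$ with $D_j\in\sA_{R_j}\cap\sB_{R_j}$; hence $M$ is a direct summand of $\bigoplus_j D_j|_R$, which is the strong antilocality condition.
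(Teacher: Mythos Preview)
Your treatment of part~(a) is correct and matches the paper's proof.

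For part~(b), the overall strategy---construct approximation sequences for the pair $(\sA_R,\sF_R)$ and conclude $\sF_R=\sB_R$ via Lemma~\ref{direct-summand-closure-is-complete-cotorsion}---is exactly the paper's (this is Proposition~\ref{locality-antilocality-main-prop}). But your sketch of the construction is internally inconsistent and does not match what actually works. You propose to choose preenvelopes $E[s_\alpha^{-1}]\hookrightarrow\mathcal B_\alpha$ over \emph{all nonempty subsets} $\alpha\subseteq\{1,\dots,d\}$ and ``splice along the \v Cech bicomplex'' to obtain $F\supseteq E$; yet you then say the verification proceeds ``one open at a time by a pushout''. These are two different pictures. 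The paper's construction uses only the single indices $j=1,\dots,d$ and builds \emph{precover} sequences $E(d)\twoheadrightarrow\dotsb\twoheadrightarrow E(0)=E$ by iterated \emph{pullbacks} along the localization maps $E(j-1)\to E(j-1)[s_j^{-1}]$ (diagram~\eqref{induction-step-pullback-diagram}); preenvelopes then come from Salce's lemma. Pushouts are the tool in the dual \emph{colocal} setting (Proposition~\ref{antilocality-colocality-main-prop}), where one has colocalization maps $\Hom_R(R[s_j^{-1}],E)\to E$ to push out along; in the local setting the natural map goes the other way, and a direct preenvelope-by-pushout construction is not available. Moreover, in an inductive ``one open at a time'' construction the successive quotients are preenvelopes of $E(j-1)[s_j^{-1}]$, not of $E[s_j^{-1}]$, so your description of the filtration pieces as $\mathcal B_\alpha$ chosen in advance from $E[s_\alpha^{-1}]$ cannot be literally correct. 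The ``main obstacle'' you flag is real, and the paper resolves it differently from what you outline.

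For part~(c), your argument would work \emph{if} part~(b) produced a filtration whose pieces are preenvelopes of the localizations of $M$ itself, but as just noted this is not what the inductive construction gives. The paper's proof of~(c) is independent of the full~(b) construction and more direct: given $D\in\sA_R\cap\sB_R$, take preenvelopes $D[s_k^{-1}]\hookrightarrow B_k$ over the \emph{single} indices~$k$ (so $B_k\in\sA_k\cap\sB_k$ by ascent for $\sA$), form the monomorphism $D\hookrightarrow\bigoplus_k B_k$, check via K\"unzer's axiom (or an explicit pullback diagram) that the cokernel lies in~$\sA_R$, and conclude the sequence splits since $D\in\sB_R$. Your idea of splitting the filtration stepwise and regrouping by a chosen $j_i\in\alpha_i$ is a correct way to pass from multi-index pieces to single-index pieces, but it is unnecessary once one has the paper's one-step argument.
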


\begin{proof}[Proof of Theorem~\ref{locality-antilocality-theorem}(a)]
 This part of the theorem is a formal consequence of
the $\Ext^1$\+adjunction properties of the functors of extension
and restriction of scalars, as in
Lemmas~\ref{categorical-ext-adjunction-lemma}
and~\ref{spectral-ext-adjunction-lemma}, and in the proof of
Proposition~\ref{left-right-ascent-coascent-direct-image-prop}.

 Since the class $\sB$ satisfies the direct image condition and
the class $\sE$ satisfies ascent, the class
$\sA=\sE\cap{}^{\perp_1}\sB\subset\sK$ satisfies ascent by
Proposition~\ref{left-right-ascent-coascent-direct-image-prop}(c).
 To prove descent for the class $\sA$, let $s_1$,~\dots, $s_d\in R$
be a collection of elements generating the unit ideal.
 Let $M\in\sK_R$ be a module or complex such that
$M[s_j^{-1}]\in\sA_{R[s_j^{-1}]}$ for all $1\le j\le d$.
 Since $\sA\subset\sE$ and the class $\sE$ satisfies descent, it follows
that $M\in\sE_R$.
 In order to show that $M\in\sA_R$, it remains to check that
$\Ext^1_{\sE_R}(M,B)=0$ for all $B\in\sB_R$.

 By antilocality, the $R$\+module or complex of $R$\+modules $B$ is
a direct summand of a module/complex finitely filtered by
modules/complexes obtained from the objects of $\sB_{R[s_j^{-1}]}$,
\,$1\le j\le d$, via the restriction of scalars.
 So it suffices to check that $\Ext^1_{\sK_R}(M,B_j)=0$ for any
module or complex $B_j\in\sB_{R[s_j^{-1}]}$, where $1\le j\le d$.
 Now, by formula~\eqref{ext-tensor-isomorphism} we have
$\Ext^1_{\sK_R}(M,B_j)\simeq
\Ext^1_{\sK_{R[s_j^{-1}]}}(M[s_j^{-1}],\>B_j)=0$, since
$M[s_j^{-1}]\in\sA_{R[s_j^{-1}]}$ and $(\sA_{R[s_j^{-1}]}$,
$\sB_{R[s_j^{-1}]})$ is a cotorsion pair in~$\sE_{R[s_j^{-1}]}$.
 Notice that it follows from the assumptions of the theorem that
the functors $\Ext^1$ in the exact categories $\sE_{R[s_j^{-1}]}$
and $\sK_{R[s_j^{-1}]}$ agree.
\end{proof}

 The proof of Theorem~\ref{locality-antilocality-theorem}(b) is based
on the following proposition, which is stated in a greater generality
of full subcategories $\sE_R\subset\sK_R$ inheriting an exact category
structure from the abelian exact category structure of the abelian
categories~$\sK_R$ (in the sense of the definition in
Section~\ref{preliminaries-cotorsion-pairs-secn}).

\begin{prop} \label{locality-antilocality-main-prop}
 Let $(\sE_R\subset\sK_R)_{R\in\R}$ be a system of classes of modules
or complexes such that, for every $R\in\R$, the full subcategory\/
$\sE_R$ inherits an exact category structure from the abelian exact
category structure of\/~$\sK_R$.
 Assume further that the class\/ $\sE$ is very local.
 Suppose given a system of classes $(\sA_R\subset\sE_R)_{R\in\R}$
such that the class\/ $\sA$ is also very local.
 Let $R\in\R$ be a ring and $s_1$,~\dots, $s_d\in R$ be a collection
of elements generating the unit ideal.
 Assume that, for every\/ $1\le j\le d$, there is a hereditary complete
cotorsion pair $(\sA_j,\sB_j)$ in the exact category\/ $\sE_j=
\sE_{R[s_j^{-1}]}$ with\/ $\sA_j=\sA_{R[s_j^{-1}]}$.

 Let\/ $\sB'_R$ be the following subclass of objects in\/~$\sE_R$.
 A module/complex $M\in\sE_R$ belongs to\/ $\sB'_R$ if and only if $M$
is a direct summand of a module/complex $B\in\sE_R$ admitting a finite
decreasing filtration $B=B^0\supset B^1\supset B^2\supset\dotsb\supset
B^{d+1}=0$ in the exact category~$\sE_R$ (with admissible monomorphisms
in\/ $\sE_R$ as inclusion maps) having the following properties.
 For every\/ $1\le j\le d$, the $R$\+module/complex
$B^j/B^{j+1}\in\sE_R$ is obtained by restriction of scalars from
an $R[s_j^{-1}]$\+module/complex belonging to the class\/
$\sB_j\subset\sE_j$.
 The $R$\+module/complex $B^0/B^1=B/B^1$ is a finite direct sum
$B^0/B^1\simeq\bigoplus_{j=1}^d B'_j$, where the $R$\+module/complex
$B'_j$ is obtained by restriction of scalars from
an $R[s_j^{-1}]$\+module/complex belonging to the class\/
$\sB_j\subset\sE_j$.

 Then $(\sA_R,\sB'_R)$ is a hereditary complete cotorsion pair
in\/~$\sE_R$.
\end{prop}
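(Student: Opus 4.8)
The plan is to apply Lemma~\ref{direct-summand-closure-is-complete-cotorsion} to the pair of classes $(\sA_R,\sB''_R)$ in the exact category $\sE_R$, where $\sB''_R\subset\sE_R$ denotes the class of those objects $B\in\sE_R$ that actually admit a filtration $B=B^0\supset B^1\supset\dotsb\supset B^{d+1}=0$ of the shape described in the proposition (so that $\sB'_R=(\sB''_R)^\oplus$). For this one has to establish two things: (i) that $\Ext^1_{\sE_R}(A,B)=0$ for all $A\in\sA_R$ and $B\in\sB''_R$; and (ii) that both approximation sequences $0\rarrow B'\rarrow A\rarrow M\rarrow0$ and $0\rarrow M\rarrow B\rarrow A'\rarrow0$, with $A,A'\in\sA_R$ and $B,B'\in\sB''_R$, exist in $\sE_R$ for every $M\in\sE_R$. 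Granting this, Lemma~\ref{direct-summand-closure-is-complete-cotorsion} yields that $(\sA_R^\oplus,\sB'_R)$ is a complete cotorsion pair in $\sE_R$; and here $\sA_R^\oplus=\sA_R$, because each class $\sA_j:=\sA_{R[s_j^{-1}]}={}^{\perp_1}\sB_j$ is closed under direct summands in $\sE_j:=\sE_{R[s_j^{-1}]}$, so a summand $N$ of an object of $\sA_R$ has $N[s_j^{-1}]\in\sA_j$ for all $j$ (ascent for $\sA$) and hence $N\in\sA_R$ (descent for $\sA$). The same ascent/descent argument, using the exactness of the localization functors and the closure of each $\sA_j$ under kernels of admissible epimorphisms in $\sE_j$ (hereditariness of $(\sA_j,\sB_j)$), shows that $\sA_R$ is closed under kernels of admissible epimorphisms in $\sE_R$, so the cotorsion pair $(\sA_R,\sB'_R)$ is hereditary; while (i) together with the inclusion $\sB''_R\subset\sB'_R$ gives ${}^{\perp_1}\sB'_R=\sA_R$ and $\sA_R^{\perp_1}=\sB'_R$ in $\sE_R$.

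For~(i), fix $A\in\sA_R$. For each $j$, the restriction of scalars $\rho_j\colon\sE_{R[s_j^{-1}]}\rarrow\sE_R$ (which lands in $\sE_R$ by the direct image condition for $\sE$) has an exact left adjoint, namely the localization $M\longmapsto M[s_j^{-1}]$, which by ascent for $\sE$ sends $\sE_R$ into $\sE_{R[s_j^{-1}]}$. Applying Lemma~\ref{categorical-ext-adjunction-lemma}(b) to this pair of adjoint functors (in the module case one may instead invoke Lemma~\ref{spectral-ext-adjunction-lemma} or the isomorphism~\eqref{ext-tensor-isomorphism}), one obtains, for every $N_j\in\sB_j$, a monomorphism of groups $\Ext^1_{\sE_R}(A,\rho_j(N_j))\rarrow\Ext^1_{\sE_{R[s_j^{-1}]}}(A[s_j^{-1}],N_j)$; and the target vanishes because $A[s_j^{-1}]\in\sA_j$ by ascent for $\sA$, while $(\sA_j,\sB_j)$ is a cotorsion pair in $\sE_{R[s_j^{-1}]}$. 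Hence $\Ext^1_{\sE_R}(A,\rho_j(N_j))=0$. The class of objects $X\in\sE_R$ with $\Ext^1_{\sE_R}(A,X)=0$ is closed under extensions (by the long exact sequence of $\Ext$ in the second argument in an exact category) and under finite direct sums; since the layers of the filtration of $B\in\sB''_R$ are either single objects $\rho_j(N_j)$ with $N_j\in\sB_j$, or --- for the top layer $B/B^1$ --- a finite direct sum of such, induction along the filtration gives $\Ext^1_{\sE_R}(A,B)=0$, and then also for every direct summand of $B$.

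Part~(ii) is the heart of the matter, and the plan is to reproduce the \v Cech-type construction of complete cotorsion pairs for a principal affine open covering from \cite[Chapter~4]{Pcosh}. Given $M\in\sE_R$, one builds the middle term $B$ of the special preenvelope sequence by treating the elements $s_1,\dots,s_d$ one after another: at the $j$\+th stage the object currently under consideration is localized at $s_j$, a special preenvelope sequence relative to the complete cotorsion pair $(\sA_j,\sB_j)$ in $\sE_{R[s_j^{-1}]}$ is formed, scalars are restored to $R$ via $\rho_j$, and the \v Cech coresolution~\eqref{cech-coresolution} is used to bookkeep the arising cokernels. The object $B$ so obtained carries precisely the filtration in the statement: the top quotient $B/B^1$ is the direct sum, over $1\le j\le d$, of the restrictions of scalars of the $\sB_j$\+parts of these special preenvelopes, while the lower layers $B^j/B^{j+1}$ absorb (after further resolution) the contributions of the \v Cech differentials. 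The special precover sequences are produced by the analogous construction --- or, once special preenvelope sequences are available for all objects of $\sE_R$ and $\sA_R$ is seen to be a generating class in $\sE_R$, from Salce's Lemma~\ref{salce-lemma}.

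Two steps in~(ii) require care, and I expect the verification that the construction stays inside the prescribed categories to be the main obstacle. First, one must show that the intermediate objects lie in $\sE_R$; the key instance is that the natural map $M\rarrow\bigoplus_{j=1}^d M[s_j^{-1}]$ is an admissible monomorphism in $\sE_R$, that is, its cokernel $Z\in\sK_R$ lies in $\sE_R$. (This map is injective in $\sK_R$ because an element of $M$ going to zero in every $M[s_j^{-1}]$ is annihilated by a power of each $s_j$, hence by the unit ideal.) One verifies $Z\in\sE_R$ by descent for $\sE$: upon inverting any $s_k$, the \v Cech complex~\eqref{cech-coresolution} of the $R[s_k^{-1}]$\+module $M[s_k^{-1}]$ becomes contractible --- one of the covering principal opens then being the whole spectrum --- so that $Z[s_k^{-1}]$ is a direct summand of a finite direct sum of iterated localizations of $M[s_k^{-1}]$, and hence lies in $\sE_{R[s_k^{-1}]}$; therefore $Z\in\sE_R$. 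Second, one must show that the cokernel $A'=B/M$ lies in $\sA_R$, which is again obtained by descent for $\sA$: when $s_k$ is inverted, the localized \v Cech differentials split, the filtration of $B[s_k^{-1}]$ splits accordingly, and $B[s_k^{-1}]$ is thereby exhibited as a direct sum of the middle term of a special preenvelope of $M[s_k^{-1}]$ relative to $(\sA_k,\sB_k)$ (whose cokernel over $M[s_k^{-1}]$ lies in $\sA_k$) and of further objects obtained by restriction of scalars from objects of $\sA_k$; hence $A'[s_k^{-1}]\in\sA_k$ for every $k$, and so $A'\in\sA_R$. Dually, the kernel $B'$ of the special precover sequence lies in $\sB''_R$. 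Together with~(i), this verifies the hypotheses of Lemma~\ref{direct-summand-closure-is-complete-cotorsion} and completes the proof.
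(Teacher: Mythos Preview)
Your treatment of the $\Ext^1$-orthogonality in~(i) and of the closure properties of $\sA_R$ is correct and matches the paper. The gap is in~(ii): you construct the approximation sequences in the wrong order, and the construction you sketch does not work.

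In the local setting the natural direct construction produces the special \emph{precover}, not the preenvelope. The paper proceeds by an iterative pullback: starting from $E(0)=E$, at step~$j$ one chooses a special precover $0\to B_j\to A_j\to E(j-1)[s_j^{-1}]\to 0$ in $\sE_j$ and pulls it back along the localization map $E(j-1)\to E(j-1)[s_j^{-1}]$ to obtain $E(j)\twoheadrightarrow E(j-1)$ with kernel $B_j$. One then checks inductively that $E(j)[s_k^{-1}]\in\sA_k$ for all $k\le j$ (this is where hereditariness of $(\sA_k,\sB_k)$ is used), so $E(d)\in\sA_R$ by descent and the kernel $B''$ of $E(d)\twoheadrightarrow E$ has the prescribed filtration. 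Only \emph{after} this does the paper run Salce's lemma, using the embedding $F\hookrightarrow\bigoplus_k B'_k$ (with $B'_k$ a special $\sB_k$-preenvelope of $F[s_k^{-1}]$) together with the already-constructed precover of the cokernel $E$; the pullback of these two short exact sequences yields the preenvelope $0\to F\to B\to E(d)\to 0$, and the extra direct-sum layer $B^0/B^1\simeq\bigoplus_k B'_k$ in the definition of $\sB'_R$ is precisely what accommodates this Salce step.

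Your attempt to build the preenvelope first cannot be carried out as described. The iterative step ``localize, take a preenvelope in $\sE_j$, restore scalars'' does not combine with the previous stage: the localization map $E(j-1)\to E(j-1)[s_j^{-1}]$ points the wrong way to push a monomorphism out along it, so there is no analogue of the pullback trick. Your verification that $A'=B/M\in\sA_R$ then rests on the claim that, after inverting $s_k$, the ``further summands'' of $B[s_k^{-1}]$ are restrictions of objects of $\sA_k$; but those summands arise from objects of $\sB_j$ for $j\ne k$ (localized at $s_k$), and there is no reason these lie in $\sA_k$. A smaller point: your argument that $Z=\coker(M\to\bigoplus_j M[s_j^{-1}])$ lies in $\sE_R$ uses closure of $\sE_k$ under direct summands, which the proposition does not assume; the paper instead shows $E[s_j^{-1}]\in\sE_j$ via K\"unzer's axiom, factoring the admissible monomorphism $F[s_j^{-1}]\to B'_j$ through $\bigoplus_k B'_k[s_j^{-1}]$.
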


\begin{proof}
 We will prove the following properties:
\begin{itemize}
\item the class $\sA_R$ is closed under direct summands and
kernels of admissible epimorphisms in\/~$\sE_R$;
\item $\Ext^1_{\sE_R}(A,B)=0$ for all $A\in\sA_R$ and $B\in\sB'_R$;
\item every module/complex $E\in\sE_R$ admits a special precover
sequence~\eqref{spec-precover-sequence} in $\sE_R$ with $B'\in\sB'_R$
and $A\in\sA_R$;
\item every module/complex $E\in\sE_R$ admits a special preenvelope
sequence~\eqref{spec-preenvelope-sequence} in $\sE_R$ with
$B\in\sB'_R$ and $A'\in\sA_R$.
\end{itemize}
 Then it will follow by virtue of
Lemma~\ref{direct-summand-closure-is-complete-cotorsion} that
$(\sA_R,\sB'_R)$ is a cotorsion pair in\/~$\sE_R$, which is clearly
hereditary and complete.

 Indeed, the classes $\sA_j=\sA_{R[s_j^{-1}]}$ are closed under
direct summands and kernels of admissible epimorphisms in
$\sE_j=\sE_{R[s_j^{-1}]}$, since $(\sA_j,\sB_j)$ is a hereditary
cotorsion pair in\/~$\sE_j$.
 As both the systems of classes $\sA$ and $\sE$ are local by
assumption, it follows that the class $\sA_R$ is closed under direct
summands and kernels of admissible epimorphisms in~$\sE_R$.
 This uses the ascent for $\sE$ and the descent for~$\sA$.

 To prove the $\Ext^1$\+orthogonality, it suffices to check that
$\Ext^1_{\sE_R}(A,B_j)=0$ for all $A\in\sA_R$ and $B_j\in\sB_j$.
 Indeed, the $\Ext^1$\+adjunction isomorphism
$$
 \Ext^1_{\sE_R}(A,B_j)\simeq\Ext^1_{\sE_j}(A[s_j^{-1}],B_j)
$$
holds by Lemma~\ref{categorical-ext-adjunction-lemma}(a),
and it remains to use the ascent for $\sA$ and
the $\Ext^1$\+or\-thog\-o\-nal\-ity of the classes $\sA_j$ and $\sB_j$
in~$\sE_j$.

 The key step is the construction of the special precover sequences.
 We follow the construction of~\cite[Lemma~A.1]{EP}
and~\cite[Lemma~4.1.1 or~4.1.8]{Pcosh} specialized to the case of
an affine scheme~$X$.
 The idea to plug whole complexes (rather than just sheaves) into this
construction was used in~\cite[Lemma~6.3]{ES}.

 Let $E\in\sE_R$ be an object.
 We proceed by increasing induction on $0\le j\le d$, constructing
a sequence of admissible epimorphisms
$$
 E(d)\,\twoheadrightarrow\, E(d-1)\,\twoheadrightarrow\,\dotsb
 \,\twoheadrightarrow\,E(1)\,\twoheadrightarrow\,E(0)=E
$$
in~$\sE_R$ with the following properties.
 Firstly, the kernel $B_j$ of the admissible epimorphism $E(j)\rarrow
E(j-1)$, \ $1\le j\le d$, comes via the restriction of scalars from
a module/complex belonging to~$\sB_j$.
 Secondly, for every $1\le k\le j\le d$, the module/complex
$E(j)[s_k^{-1}]$ belongs to the class~$\sA_k$.

 Then, in particular, $E(d)[s_j^{-1}]\in\sA_j$ for all $1\le j\le d$,
and it follows that $E(d)\in\sA_R$ (since the class $\sA$ satisfies
descent).
 On the other hand, the kernel $B''$ of the composition
$E(d)\twoheadrightarrow E$ of the sequence of admissible epimorphisms
above is an $R$\+module or complex of $R$\+modules filtered by
the $R[s_j^{-1}]$\+modules or complexes of $R[s_j^{-1}]$\+modules
$B_j\in\sB_j$, so $B''\in\sB'_R$.

 The induction base, $j=0$, is dealt with by setting $E(0)=E$.
 Now assume that we have already constructed admissible
epimorphisms $E(j-1)\twoheadrightarrow E(j-2)\twoheadrightarrow\dotsb
\twoheadrightarrow E(1)\twoheadrightarrow E(0)$ with the desired
properties, for some $j\ge1$.
 Let us construct an admissible epimorphism $E(j)
\twoheadrightarrow E(j-1)$.

 We have $E(j-1)[s_j^{-1}]\in\sE_j$, since $E(j-1)\in\sE_R$ and
the class $\sE$ satisfies ascent.
 Pick a special precover sequence
\begin{equation} \label{building-block-precover}
 0\lrarrow B_j\lrarrow A_j\lrarrow E(j-1)[s_j^{-1}]\lrarrow0
\end{equation}
with respect to the cotorsion pair $(\sA_j,\sB_j)$ in the exact
category~$\sE_j$.
 Taking the restriction of scalars, one can
view~\eqref{building-block-precover} as a short exact sequence in
$\sE_R$ with $A_j\in\sA_R$, since the classes $\sE$ and $\sA$
satisfy the direct image condition.
 Consider the pullback diagram in the exact category~$\sE_R$
\begin{equation} \label{induction-step-pullback-diagram}
\begin{gathered}
 \xymatrix{
  0 \ar[r] & B_j \ar[r] & A_j \ar[r] & E(j-1)[s_j^{-1}] \ar[r]
  & 0 \\
  0 \ar[r] & B_j \ar[r] \ar@{=}[u] & E(j) \ar[r] \ar[u]
  & E(j-1) \ar[r] \ar[u] & 0
 }
\end{gathered}
\end{equation}
where $E(j-1)\rarrow E(j-1)[s_j^{-1}]$ is the localization map.
 The object $E(j)\in\sE_R$ is constructed as the pullback object in
this diagram.

 To show that $E(j)[s_j^{-1}]\in\sA_j$, it suffices to notice that
the localization functor $M\longmapsto M[s_j^{-1}]$ is exact as
a functor $\sK_R\rarrow\sK_{R[s_j^{-1}]}$, and consequently, also
as a functor $\sE_R\rarrow\sE_{R[s_j^{-1}]}=\sE_j$; so it
preserves pullbacks of short exact sequences.
 Therefore, the induced morphism $E(j)[s_j^{-1}]\rarrow A_j[s_j^{-1}]
=A_j$ is an isomorphism.

 To prove that $E(j)[s_k^{-1}]\in\sA_k$ for $1\le k<j$, we use
the induction assumption telling that $E(j-1)[s_k^{-1}]\in\sA_k$.
 Applying to~\eqref{induction-step-pullback-diagram} the localization
functor $M\longmapsto M[s_k^{-1}]$, we obtain a pullback diagram
in~$\sE_k$
\begin{equation} \label{induction-step-pullback-diagram-localized}
\begin{gathered}
 \xymatrix{
  0 \ar[r] & B_j[s_k^{-1}] \ar[r] & A_j[s_k^{-1}] \ar[r]
  & E(j-1)[s_k^{-1},s_j^{-1}] \ar[r] & 0 \\
  0 \ar[r] & B_j[s_k^{-1}] \ar[r] \ar@{=}[u]
  & E(j)[s_k^{-1}] \ar[r] \ar[u] & E(j-1)[s_k^{-1}] \ar[r] \ar[u] & 0
 }
\end{gathered}
\end{equation}

 Now we have $E(j-1)[s_k^{-1},s_j^{-1}]\in\sA_k$, since
$E(j-1)[s_k^{-1}]\in\sA_k$ and the class $\sA$ satisfies the ascent
and direct image conditions.
 Similarly $A_j[s_k^{-1}]\in\sA_k$, since $A_j\in\sA_j$.
 The cotorsion pair $(\sA_k,\sB_k)$ in the exact category $\sE_k$ is
hereditary by assumption, so it follows from the short exact sequence
in the upper line of~\eqref{induction-step-pullback-diagram-localized}
that $B_j[s_k^{-1}]\in\sA_k$.
 Finally, in the lower line of the diagram we have
$B_j[s_k^{-1}]\in\sA_k$ and $E(j-1)[s_k^{-1}]\in\sA_k$, hence
$E(j)[s_k^{-1}]\in\sA_k$, as the class $\sA_k$ is closed
under extensions in~$\sE_k$.

 We have constructed the special precover sequence
\begin{equation} \label{special-precover-sequence-constructed}
 0\lrarrow B''\lrarrow E(d)\lrarrow E\lrarrow0
\end{equation}
in $\sE_R$ with $B''\in\sB'_R$ and $E(d)\in\sA_R$.
 In order to finish the proof of the proposition, it remains to
produce the special preenvelope sequences.
 For this purpose, we use the construction from the proof of
the Salce lemma (Lemma~\ref{salce-lemma}).

 Let $F\in\sE_R$ be an object.
 We start with choosing special preenvelopes
$$
 0\lrarrow F[s_k^{-1}]\lrarrow B'_k\lrarrow A'_k\lrarrow0
$$
with respect to the cotorsion pairs $(\sA_k,\sB_k)$ in the exact
categories $\sE_k$ for all $1\le k\le d$.
 Now $F\rarrow\bigoplus_{k=1}^d F[s_k^{-1}]$ is a monomorphism in
$\sK_R$, and consequently the composition
$F\rarrow\bigoplus_{k=1}^d F[s_k^{-1}]\rarrow\bigoplus_{k=1}^d B'_k$
is a monomorphism in $\sK_R$ as well.
 So we have a short exact sequence
\begin{equation} \label{cogeneration-sequence-in-salce-lemma}
 0\lrarrow F\lrarrow\bigoplus\nolimits_{k=1}^d B'_k\lrarrow E
 \lrarrow0
\end{equation}
in the abelian category~$\sK_R$.
 In order to show that~\eqref{cogeneration-sequence-in-salce-lemma} is
a short exact sequence in the exact category $\sE_R$, we need to prove
that $E\in\sE_R$.

 Since the class $\sE$ satisfies descent by assumption, it suffices to
check that $E[s_j^{-1}]\in\sE_j$ for all $1\le j\le d$.
 The admissible monomorphism $F[s_j^{-1}]\rarrow B'_j$ in $\sE_j$
factorizes into the composition
$$
 F[s_j^{-1}]\lrarrow\bigoplus\nolimits_{k=1}^d B'_k[s_j^{-1}]
 \lrarrow B'_j,
$$
where $B'_k[s_j^{-1}]\in\sE_j$ since $B'_k\in\sB_k\subset\sE_k$
and the class $\sE$ satisfies the ascent and direct image conditions.
 So the morphism $\bigoplus_{k=1}^d B'_k[s_j^{-1}]\rarrow B'_j$ is
a (split) admissible epimorphism in~$\sE_j$.
 By K\"unzer's axiom~\cite[Exercise~3.11(i)]{Bueh}, it follows that
$F[s_j^{-1}]\rarrow\bigoplus_{k=1}^d B'_k[s_j^{-1}]$ is an admissible
monomorphism in~$\sE_j$.
 As $E[s_j^{-1}]$ is the cokernel of the latter morphism in
$\sK_{R[s_j^{-1}]}$, we can conclude that $E[s_j^{-1}]\in\sE_j$.

 We have shown that~\eqref{cogeneration-sequence-in-salce-lemma} is
an (admissible) short exact sequence in~$\sE_R$.
 Now we can proceed with the construction from the proof of the Salce
lemma.
 Applying the construction of the special precover sequence spelled
out above to the object $E\in\sE_R$, we obtain a short exact
sequence~\eqref{special-precover-sequence-constructed}.
 Consider the pullback diagram
of~\eqref{cogeneration-sequence-in-salce-lemma}
and~\eqref{special-precover-sequence-constructed}
in the exact category~$\sE_R$
\begin{equation} \label{salce-pullback-diagram}
\begin{gathered}
 \xymatrix{
  & B'' \ar@{=}[r] \ar@{>->}[d] & B'' \ar@{>->}[d] \\
  F \ar@{>->}[r] \ar@{=}[d] & B \ar@{->>}[r] \ar@{->>}[d]
  & E(d) \ar@{->>}[d] \\
  F \ar@{>->}[r] & B' \ar@{->>}[r] & E
 }
\end{gathered}
\end{equation}
where $B'=\bigoplus\nolimits_{k=1}^d B'_k$ and $B$ is the pullback
of the pair of admissible epimorphisms $B'\rarrow E$ and $E(d)
\rarrow E$ in~$\sE_R$.
 It remains to say that the middle horizontal short exact sequence
$0\rarrow F\rarrow B\rarrow E(d)\rarrow0$
in~\eqref{salce-pullback-diagram} is the desired special preenvelope
sequence for the object $F\in\sE_R$, where one has $B\in\sB'_R$ in
view of the middle vertical short exact sequence
$0\rarrow B''\rarrow B\rarrow B'\rarrow0$.
\end{proof}

 Now we are done with the proof of
Proposition~\ref{locality-antilocality-main-prop},
and so we can prove part~(b) of the theorem.

\begin{proof}[Proof of Theorem~\ref{locality-antilocality-theorem}(b)]
 By assumptions, both the classes $\sE$ and $\sA$ are very local; and
the full subcategory $\sE_R$ in closed under extensions in $\sK_R$ for
every $R\in\R$, so it inherits an exact category structure.
 Hence Proposition~\ref{locality-antilocality-main-prop} is applicable.

 Let $R\in\R$ be a ring and $s_1$,~\dots, $s_d\in R$ be a collection of
elements generating the unit ideal.
 From these data, the proposition produces a cotorsion pair
$(\sA_R,\sB'_R)$ in $\sE_R$, while in the assumptions of the theorem
we are given a cotorsion pair $(\sA_R,\sB_R)$ in~$\sE_R$.
 It follows that
\begin{itemize}
\item $\sB_R=\sB'_R$;
\item the class $\sB'_R$ is closed under extensions in $\sE_R$,
and consequently also in~$\sK_R$.
\end{itemize}
 These observations prove that the class $\sB$ is antilocal.

 In addition, they provide a precise form of the finite filtration
appearing in the antilocality condition for $\sB$, with a specific
bound on the length of such filtration.
 One can say that the length of the filtration certainly
does not exceed~$2d$.
\end{proof}

\begin{proof}[Proof of Theorem~\ref{locality-antilocality-theorem}(c)]
 Let $R\in\R$ be a ring and $s_1$,~\dots, $s_d\in R$ be a collection of
elements generating the unit ideal.
 We keep the notation $\sA_j=\sA_{R[s_j^{-1}]}$, \
$\sB_j=\sA_{R[s_j^{-1}]}$, and $\sE_j=\sE_{R[s_j^{-1}]}$ for
all $1\le j\le d$.
 Suppose given a module/complex $D\in\sA_R\cap\sB_R$.
 Then $D[s_j^{-1}]\in\sA_j$ for every $1\le j\le d$, since 
the class $\sA$ satisfies ascent.
 For every $1\le k\le d$, pick a special preenvelope sequence
$$
 0\lrarrow D[s_k^{-1}]\lrarrow B_k\lrarrow A_k\lrarrow0
$$
with respect to the cotorsion pair $(\sA_k,\sB_k)$ in the exact
category~$\sE_k$; so $B_k\in\sB_k$ and $A_k\in\sA_k$.
 Then we have $D[s_k^{-1}]\in\sA_k$ and $A_k\in\sA_k$, hence
$B_k\in\sA_k\cap\sB_k$.

 The composition of the natural monomorphism $D\rarrow
\bigoplus_{k=1}^d D[s_k^{-1}]$ in $\sK_R$ with the direct sum
of the monomorphisms $D[s_k^{-1}]\rarrow B_k$ (viewed as morphisms
in $\sK_R$ via the restriction of scalars) provides a monomorphism
$D\rarrow\bigoplus_{k=1}^d B_k$ in~$\sK_R$.
 Hence we have a short exact sequence
\begin{equation} \label{kernel-special-preenvelope}
 0\lrarrow D\lrarrow\bigoplus\nolimits_{k=1}^d B_k\lrarrow A\lrarrow0
\end{equation}
in the abelian category~$\sK_R$.
 We have already seen in the construction of the special preenvelope
in the proof of Proposition~\ref{locality-antilocality-main-prop}
that $A\in\sE_R$.
 Let us show that in the situation at hand $A\in\sA_R$.

 The argument is the same.
 Since the class $\sA$ satisfies descent by assumption, it suffices
to check that $A[s_j^{-1}]\in\sA_j$ for all $1\le j\le d$.
 As a full subcategory closed under extensions in $\sE_j$, the additive
category $\sA_j$ inherits an exact category structure.
 The morphism $D[s_j^{-1}]\rarrow B_j$ is an admissible monomorphism
in $\sA_j$, since it is an admissible monomorphism in $\sE_j$ between
two objects from $\sA_j$ with the cokernel $A_j\in\sA_j$.
 On the other hand, this morphism factorizes into the composition
$$
 D[s_j^{-1}]\lrarrow\bigoplus\nolimits_{k=1}^d B_k[s_j^{-1}]
 \lrarrow B_j,
$$
where $B_k[s_j^{-1}]\in\sA_j$ since $B_k\in\sA_k$ and the class $\sA$
satisfies the ascent and direct image conditions.
 Hence the morphism $\bigoplus_{k=1}^d B_k[s_j^{-1}]\rarrow B_j$ is
a (split) admissible epimorphism in~$\sA_j$.
 So~\cite[Exercise~3.11(i)]{Bueh} tells that $D[s_j^{-1}]\rarrow
\bigoplus_{k=1}^d B[s_j^{-1}]$ is an admissible monomorphism in~$\sA_j$.
 As $A[s_j^{-1}]$ is the cokernel of this morphism in
$\sK_{R[s_j^{-1}]}$, we can conclude that $A[s_j^{-1}]\in\sA_j$
and $A\in\sA_R$.

 Instead of referring to K\"unzer's axiom, one could spell out its
proof in the situation at hand.
 Let us do it, for the reader's convenience.
 Consider the pullback diagram in the exact category $\sE_j$, or even
in the abelian category~$\sK_{R[s_j^{-1}]}$
$$
 \xymatrix{
  & B''[s_j^{-1}] \ar@{=}[r] \ar@{>->}[d]
  & B''[s_j^{-1}] \ar@{>->}[d] \\
  D[s_j^{-1}] \ar@{>->}[r] \ar@{=}[d]
  & B'[s_j^{-1}] \ar@{->>}[r] \ar@{->>}[d]
  & A[s_j^{-1}] \ar@{->>}[d] \\
  D[s_j^{-1}] \ar@{>->}[r] & B_j \ar@{->>}[r] & A_j
 }
$$
where $B'=\bigoplus_{k=1}^d B_k$ and $B''=\bigoplus_{k\ne j}B_k$.
 We have $A_j\in\sA_j$ and $B_k[s_j^{-1}]\in\sA_j$ for all
$1\le k\le d$, so it follows from the rightmost vertical short exact
sequence $0\rarrow B''[s_j^{-1}]\rarrow A[s_j^{-1}]\rarrow A_j\rarrow0$
that $A[s_j^{-1}]\in\sA_j$.
 Indeed, the class $\sA_j$ is closed under extensions in $\sE_j$ and
the class $\sE_j$ is closed under extensions in $\sK_{R[s_j^{-1}]}$
by the assumptions of the theorem.

 Finally, we use the observation that
$\Ext^1_{\sE_R}(A,D)=0$, since $A\in\sA_R$ and $D\in\sB_R$.
 Thus it follows from the short exact
sequence~\eqref{kernel-special-preenvelope} that $D$ is a direct
summand in $\bigoplus_{k=1}^d B_k$.
 It remains to recall that $B_k\in\sA_k\cap\sB_k$.
\end{proof}

\begin{rem}
 Let us point out that the converse implication to
Theorem~\ref{locality-antilocality-theorem}(a) is \emph{not} true,
or in other words, the very locality assumption in part~(b) of
the theorem \emph{cannot} be replaced with the locality assumption.

 Indeed, let $\R$ be the class of all commutative rings and
$\sE_R=\sK_R=R\Modl$ for all $R\in\R$.
 Consider the system of cotorsion pairs $(\sA_R,\sB_R)$ in $R\Modl$,
where $\sA_R$ is the class of projective $R$\+modules and $\sB_R$ is
the class of all $R$\+modules.
 Then $\sA$ is a local class, as projectivity of modules over
commutative rings is a local property by the Raynaud--Gruson theorem
(see Example~\ref{projectivity-local-example}).
 However, the class $\sB$ of all modules is \emph{not} antilocal,
as we have seen in Example~\ref{all-modules-not-antilocal}.
\end{rem}

\begin{rem} \label{relative-antilocality-remark}
 Similarly to the formulation of
Proposition~\ref{locality-antilocality-main-prop}, one could relax
the assumptions of Theorem~\ref{locality-antilocality-theorem} by
requiring the full subcategory $\sE_R$ to inherit an exact category
structure from the abelian exact structure of $\sK_R$, but not
necessarily to be closed under extensions or direct summands
in~$\sK_R$.
 All the arguments in the proof can be made to work in this context.
 The only difference is that then one would have to speak about
the class $\sB$ being antilocal \emph{within} $\sE$ rather than
in the whole system of abelian categories $\sK$, in the sense that
the finitely iterated extensions and direct summands in the definition
of antilocality would have to be taken in $\sE_R$ and not in~$\sK_R$.
 The point is that our definition of antilocality implies that any
antilocal class is closed under extensions and direct summands
in~$\sK$.
 Antilocality within $\sE$ would mean that the class is only closed
under extensions and direct summands in~$\sE$.
\end{rem}

\Section{Antilocal and Colocal Classes in Cotorsion Pairs}
\label{antilocal-colocal-secn}

 As in the previous sections, we suppose to have chosen a class of
commutative rings $\R$ such that $R\in\R$ and $s\in R$ implies
$R[s^{-1}]\in\R$.
 As in Section~\ref{colocality-first-examples-secn}, we consider
classes of \emph{contraadjusted modules} $\sE_R\subset\sK_R^\cta=
R\Modl^\cta$ or classes of \emph{complexes of contraadjusted
modules} $\sE_R\subset\sK_R^\cta=\bC(R\Modl^\cta)$.

 Recall that the full subcategory $\sK_R^\cta$ is closed under
extensions (as well as products and quotients) in the abelian
category $\sK_R$, so it inherits an exact category structure from
the abelian exact structure of~$\sK_R$.
 Under suitable assumptions, we will endow full subcategories
$\sE_R\subset\sK_R^\cta$ with the inherited exact category structures.

 The definitions of a \emph{colocal class} and a \emph{very colocal
class} of modules or complexes were given in
Section~\ref{colocality-first-examples-secn}, while the definition of
an \emph{antilocal class} can be found in
Section~\ref{local-antilocal-secn}.
 Let us emphasize once again that our definition of colocality (or
more specifically, the codescent) presumes contraadjustedness of
modules or termwise contraadjustedness of complexes, i.~e., it is
always codescent \emph{within the class\/~$\sK_R^\cta$}.

 The following theorem is the dual version of
Theorem~\ref{locality-antilocality-theorem} and the second main result
of this paper.

\begin{thm} \label{antilocality-colocality-theorem}
 Let $(\sE_R\subset\sK_R^\cta)_{R\in\R}$ be a system of classes of
contraadjusted modules or complexes of contraadjusted modules such that,
for every $R\in\R$, the class\/ $\sE_R$ is closed under extensions and
direct summands in\/~$\sK_R^\cta$.
 Assume further that the system of classes\/ $\sE$ is very colocal.
 Let $(\sA_R\subset\sE_R)_{R\in\R}$ and $(\sB_R\subset\sE_R)_{R\in\R}$
be two systems of classes such that, for every $R\in\R$, the pair of
classes $(\sA_R,\sB_R)$ is a hereditary complete cotorsion pair in
the exact category\/~$\sE_R$.
 Then the following implications hold:
\begin{enumerate}
\renewcommand{\theenumi}{\alph{enumi}}
\item if the class\/ $\sA$ is antilocal, then the class\/ $\sB$ is
colocal;
\item if the class\/ $\sB$ is very colocal, then the class\/ $\sA$ is
antilocal;
\item if the class\/ $\sB$ is very colocal, then the system of classes\/
$\sA\cap\sB=(\sA_R\cap\sB_R)_{R\in\R}$ is strongly antilocal.
\end{enumerate}
\end{thm}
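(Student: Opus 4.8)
The plan is to run, essentially line by line, the dual of the proof of Theorem~\ref{locality-antilocality-theorem} together with the dual of Proposition~\ref{locality-antilocality-main-prop}, interchanging throughout the roles of the left class~$\sA$ and the right class~$\sB$, of the localization functor $M\longmapsto M[s^{-1}]$ and the colocalization functor $M\longmapsto\Hom_R(R[s^{-1}],M)$, of kernels of admissible epimorphisms and cokernels of admissible monomorphisms, and of the natural monomorphism $M\rarrow\bigoplus_jM[s_j^{-1}]$ and the natural epimorphism $\bigoplus_j\Hom_R(R[s_j^{-1}],M)\rarrow M$ furnished by Lemma~\ref{colocalization-admissible-epimorphism-lemma}. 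The single genuine asymmetry, which must be watched at every step, is that the colocalization functor is exact only on the exact category $\sK_R^\cta$ of (termwise) contraadjusted objects, not on $\sK_R$ itself; since every class in play is contained in $\sK_R^\cta$, and $\sK_R^\cta$ is closed under extensions, products, and quotients in $\sK_R$, every pushout and every colocalization occurring in the argument stays inside $\sK_R^\cta$, and the whole discussion --- the codescent in particular --- is confined to $\sK_R^\cta$ (this is also why the codescent condition is \emph{defined} within $\sK_R^\cta$; compare Example~\ref{naive-codescent-fails}).

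For part~(a): being antilocal, the class~$\sA$ satisfies the direct image condition, so $\sB=\sE\cap\sA^{\perp_1}$ satisfies coascent by Proposition~\ref{left-right-ascent-coascent-direct-image-prop}(b) and the coascent for~$\sE$. For codescent, let $M\in\sK_R^\cta$ have $\Hom_R(R[s_j^{-1}],M)\in\sB_{R[s_j^{-1}]}$ for all $1\le j\le d$; codescent for~$\sE$ yields $M\in\sE_R$, and it remains to see that $\Ext^1_{\sE_R}(A,M)=0$ for every $A\in\sA_R$. Writing~$A$, by antilocality of~$\sA$, as a direct summand of an object finitely filtered by modules/complexes restricted from the classes $\sA_{R[s_j^{-1}]}$, one reduces to checking $\Ext^1_{\sK_R}(A_j,M)=0$ for $A_j\in\sA_{R[s_j^{-1}]}$, which is the isomorphism~\eqref{ext-hom-isomorphism} with $S=R[s_j^{-1}]$ and $m=1$ --- available precisely because $M$ is contraadjusted, so that $\Ext^1_R(R[s_j^{-1}],M)=0$ --- combined with the cotorsion pair property of $(\sA_{R[s_j^{-1}]},\sB_{R[s_j^{-1}]})$ in $\sE_{R[s_j^{-1}]}$.

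For part~(b), I would first formulate and prove the dual of Proposition~\ref{locality-antilocality-main-prop}: given hereditary complete cotorsion pairs $(\sA_j,\sB_j)$ in $\sE_j=\sE_{R[s_j^{-1}]}$ with $\sB_j=\sB_{R[s_j^{-1}]}$, the class $\sA'_R$ consisting of the direct summands of objects of $\sE_R$ carrying a finite filtration with one layer a finite direct sum $\bigoplus_{j=1}^d A'_j$ (each $A'_j$ restricted from $\sA_j$) and the other $d$ layers restricted from the individual $\sA_j$, forms together with $\sB_R$ a hereditary complete cotorsion pair in~$\sE_R$; comparing this with the given pair $(\sA_R,\sB_R)$ forces $\sA_R=\sA'_R$, which is exactly the antilocality of~$\sA$ (the filtration having length $\le 2d$). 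The key --- and the main obstacle of the whole theorem --- is the dual of the construction of special precover sequences, namely the construction of special preenvelope sequences by increasing induction on $0\le j\le d$: one builds admissible monomorphisms $F=F(0)\rarrow F(1)\rarrow\dotsb\rarrow F(d)$ in~$\sE_R$ with $\coker(F(j-1)\rarrow F(j))$ restricted from~$\sA_j$ and $\Hom_R(R[s_k^{-1}],F(j))\in\sB_k$ for all $1\le k\le j$, passing from $F(j-1)$ to $F(j)$ by taking a special preenvelope $0\rarrow\Hom_R(R[s_j^{-1}],F(j-1))\rarrow B_j\rarrow A_j\rarrow0$ with respect to $(\sA_j,\sB_j)$ and pushing it out along the evaluation morphism $\Hom_R(R[s_j^{-1}],F(j-1))\rarrow F(j-1)$. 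Colocalizing this pushout at~$s_j$ gives a commutative ladder with identity outer columns (the triangle identity for the restriction--coinduction adjunction says that applying $\Hom_R(R[s_j^{-1}],{-})$ to the evaluation morphism yields the identity), whence $\Hom_R(R[s_j^{-1}],F(j))\simeq B_j\in\sB_j$; colocalizing at $s_k$ for $k<j$ and using the identity $\Hom_R(R[s_k^{-1}],\Hom_R(R[s_j^{-1}],{-}))=\Hom_R(R[s_k^{-1},s_j^{-1}],{-})$, the coascent and direct image conditions for~$\sB$, and the hereditary property of $(\sA_k,\sB_k)$, one gets $\Hom_R(R[s_k^{-1}],F(j))\in\sB_k$ --- exactly dually to the $s_k$\+localization step in the proof of Proposition~\ref{locality-antilocality-main-prop}. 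Then $F(d)\in\sB_R$ by codescent for~$\sB$ and $F(d)/F(0)\in\sA'_R$; the special precover sequences follow by the dual of the Salce construction (Lemma~\ref{salce-lemma}), using Lemma~\ref{colocalization-admissible-epimorphism-lemma} to cover $E\in\sE_R$ by a direct sum of special precovers $A'_k\rarrow\Hom_R(R[s_k^{-1}],E)$, checking its kernel lies in~$\sE_R$ by codescent (and the dual of K\"unzer's axiom), applying the preenvelope construction to that kernel, and taking a pullback; Lemma~\ref{direct-summand-closure-is-complete-cotorsion} then finishes it.

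For part~(c): given $D\in\sA_R\cap\sB_R$, coascent for~$\sB$ gives $\Hom_R(R[s_j^{-1}],D)\in\sB_j$, and special precover sequences $0\rarrow B_k\rarrow A_k\rarrow\Hom_R(R[s_k^{-1}],D)\rarrow0$ with respect to $(\sA_k,\sB_k)$ produce $A_k\in\sA_k\cap\sB_k$ ($A_k\in\sA_k$ by the cotorsion pair, $A_k\in\sB_k$ as an extension of $\Hom_R(R[s_k^{-1}],D)$ by $B_k$, both in $\sB_k$). Composing the direct sum of these precovers with the epimorphism $\bigoplus_k\Hom_R(R[s_k^{-1}],D)\rarrow D$ of Lemma~\ref{colocalization-admissible-epimorphism-lemma} yields an admissible epimorphism $\bigoplus_k A_k\rarrow D$ in $\sK_R^\cta$ with some kernel~$B\in\sK_R^\cta$; colocalizing $0\rarrow B\rarrow\bigoplus_k A_k\rarrow D\rarrow0$ at each~$s_j$ (exact on $\sK_R^\cta$) and using the triangle identity to identify the $j$th component of the colocalized surjection with the precover map $A_j\rarrow\Hom_R(R[s_j^{-1}],D)$, one obtains short exact sequences $0\rarrow B_j\rarrow\Hom_R(R[s_j^{-1}],B)\rarrow\bigoplus_{k\ne j}\Hom_R(R[s_j^{-1}],A_k)\rarrow0$. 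Since $B_j\in\sB_j$ and, using $A_k\in\sB_k$ with the direct image and coascent for~$\sB$, the right-hand term lies in $\sB_j$, we get $\Hom_R(R[s_j^{-1}],B)\in\sB_j$; hence $B\in\sB_R$ (so $B\in\sE_R$) by codescent, whence $\Ext^1_{\sE_R}(D,B)=0$ and the sequence $0\rarrow B\rarrow\bigoplus_k A_k\rarrow D\rarrow0$ splits. Thus $D$ is a direct summand of $\bigoplus_k A_k$ with $A_k\in\sA_k\cap\sB_k$. The converse implication of strong antilocality is immediate from the direct image conditions, which hold for~$\sB$ by hypothesis and for~$\sA$ by Proposition~\ref{left-right-ascent-coascent-direct-image-prop}(d) (using that $\sB$ is very colocal, $\sB_R\subset\sK_R^\cta$, and $\sA_R=\sE_R\cap{}^{\perp_1}\sB_R$).
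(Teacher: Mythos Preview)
Your proposal is correct and follows essentially the same approach as the paper's own proof, including the dual of Proposition~\ref{locality-antilocality-main-prop}, the inductive construction of the special preenvelope sequence via pushouts along the evaluation maps, and the Salce-type passage to special precover sequences using Lemma~\ref{colocalization-admissible-epimorphism-lemma}. One small slip: in the Salce step for part~(b), after applying the preenvelope construction to the kernel you should take a \emph{pushout} (as in the paper's diagram~\eqref{salce-pushout-diagram}), not a pullback---the pullback version is the one used on the other side, in Proposition~\ref{locality-antilocality-main-prop}.
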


\begin{proof}[Proof of
Theorem~\ref{antilocality-colocality-theorem}(a)]
 Once again, this part of the theorem is a formal consequence of
the $\Ext^1$\+adjunction properties of the functors of restriction
and coextension of scalars, as in
Lemmas~\ref{categorical-ext-adjunction-lemma}
and~\ref{spectral-ext-adjunction-lemma}, and in the proof of
Proposition~\ref{left-right-ascent-coascent-direct-image-prop}.

 Since the class $\sA$ satisfies the direct image condition and
the class $\sE$ satisfies coascent, the class
$\sB=\sA^{\perp_1}\cap\sE$ satisfies coascent by
Proposition~\ref{left-right-ascent-coascent-direct-image-prop}(b).
 To prove codescent for the class $\sB$, let $s_1$,~\dots, $s_d\in R$
be a collection of elements generating the unit ideal.
 Let $M\in\sK_R^\cta$ be a module or complex such that
$\Hom_R(R[s_j^{-1}],M)\in\sB_{R[s_j^{-1}]}$ for all $1\le j\le d$.
 Since $\sB\subset\sE$ and the class $\sE$ satisfies codescent, it
follows that $M\in\sE_R$.
 In order to show that $M\in\sB_R$, it remains to check that
$\Ext^1_{\sE_R}(A,M)=0$ for all $A\in\sA_R$.

 By antilocality, the $R$\+module or complex of $R$\+modules $A$ is
a direct summand of a module/complex finitely filtered by
modules/complexes obtained from the objects of $\sA_{R[s_j^{-1}]}$,
\,$1\le j\le d$, via the restriction of scalars.
 So it suffices to check that $\Ext^1_{\sK_R}(A_j,M)=0$ for any
module or complex $A_j\in\sA_{R[s_j^{-1}]}$, where $1\le j\le d$.
 Now, by formula~\eqref{ext-hom-isomorphism} we have
$\Ext^1_{\sK_R}(A_j,M)\simeq
\Ext^1_{\sK_{R[s_j^{-1}]}}(A_j,\Hom_R(R[s_j^{-1}],M))=0$, since
$\Hom_R(R[s_j^{-1}],M)\in\sB_{R[s_j^{-1}]}$ and $(\sA_{R[s_j^{-1}]}$,
$\sB_{R[s_j^{-1}]})$ is a cotorsion pair in~$\sE_{R[s_j^{-1}]}$.
  Notice that it follows from the assumptions of the theorem that
the functors $\Ext^1$ in the exact categories $\sE_{R[s_j^{-1}]}$,
\,$\sK_{R[s_j^{-1}]}^\cta$, and $\sK_{R[s_j^{-1}]}$ agree.
\end{proof}

 The proof of Theorem~\ref{antilocality-colocality-theorem}(b) is based
on the following proposition, which is stated in a greater generality
of full subcategories $\sE_R\subset\sK_R^\cta$ inheriting an exact
category structure from $\sK_R^\cta$ (or equivalently, from~$\sK_R$),
as defined in Section~\ref{preliminaries-cotorsion-pairs-secn}.

\begin{prop} \label{antilocality-colocality-main-prop}
 Let $(\sE_R\subset\sK_R^\cta)_{R\in\R}$ be a system of classes of
contraadjusted modules or complexes of contraadjusted modules such that,
for every $R\in\R$, the full subcategory\/ $\sE_R$ inherits an exact
category structure from\/~$\sK_R^\cta$.
 Assume further that the class\/ $\sE$ is very colocal.
 Suppose given a system of classes $(\sB_R\subset\sE_R)_{R\in\R}$ such
that the class\/ $\sB$ is also very colocal.
 Let $R\in\R$ be a ring and $s_1$,~\dots, $s_d\in R$ be a collection of
elements generating the unit ideal.
 Assume that, for every\/ $1\le j\le d$, there is a hereditary complete
cotorsion pair $(\sA_j,\sB_j)$ in the exact category\/
$\sE_j=\sE_{R[s_j^{-1}]}$ with\/ $\sB_j=\sB_{R[s_j^{-1}]}$.

 Let\/ $\sA'_R$ be the following subclass of objects in\/~$\sE_R$.
 A module/complex $M\in\sE_R$ belongs to\/ $\sA'_R$ if and only if $M$
is a direct summand of a module/complex $A\in\sE_R$ admitting a finite
increasing filtration $0=A_{-1}\subset A_0\subset A_1\subset A_2
\subset\dotsb\subset A_d=A$ in the exact category\/ $\sE_R$ (with
admissible monomorphisms in\/ $\sE_R$ as inclusion maps) having
the following properties.
 For every $1\le j\le d$, the $R$\+module/complex $A_j/A_{j-1}\in\sE_R$
is obtained by restriction of scalars from
an $R[s_j^{-1}]$\+module/complex belonging to the class\/
$\sA_j\subset\sE_j$.
 The $R$\+module/complex $A_0$ is a finite direct sum $A_0=
\bigoplus_{j=1}^d A'_j$, where the $R$\+module/complex $A'_j$ is
obtained by restriction of scalars from an $R[s_j^{-1}]$\+module/complex
belonging to the class\/ $\sA_j\subset\sE_j$.

 Then $(\sA'_R,\sB_R)$ is a hereditary complete cotorsion pair
in\/~$\sE_R$.
\end{prop}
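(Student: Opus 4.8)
The plan is to dualize the proof of Proposition~\ref{locality-antilocality-main-prop} essentially verbatim, replacing the localization functors $M\mapsto M[s_j^{-1}]$ by the colocalization functors $\Gamma_j=\Hom_R(R[s_j^{-1}],{-})$, pullbacks by pushouts, kernels by cokernels, ascent by coascent, and descent by codescent. The structural fact that makes the dualization go through is that, because $\sE_R\subset\sK_R^\cta$, each functor $\Gamma_j$ carries admissible short exact sequences in $\sE_R$ to admissible short exact sequences in $\sE_j=\sE_{R[s_j^{-1}]}$ --- this uses $\Ext^1_R(R[s_j^{-1}],C)=0$ for contraadjusted $C$ together with the coascent and direct image conditions for $\sE$ --- and moreover that $\Gamma_j$ is idempotent, in the sense that the counit $\Gamma_j N\to N$ becomes an isomorphism after applying $\Gamma_j$, while $\Gamma_j$ is the identity on objects restricted from $R[s_j^{-1}]$.

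As in the original, the endpoint is Lemma~\ref{direct-summand-closure-is-complete-cotorsion}, so I would establish that $\sA'_R$ is closed under direct summands in $\sE_R$ (immediate from its definition), that $\Ext^1_{\sE_R}(A,B)=0$ for all $A\in\sA'_R$ and $B\in\sB_R$, and that every $E\in\sE_R$ admits both approximation sequences \eqref{spec-precover-sequence} and \eqref{spec-preenvelope-sequence} with terms in $\sA'_R$ and $\sB_R$. The resulting cotorsion pair $(\sA'_R,\sB_R)$ is then automatically complete, and it is hereditary because $\sB_R$ is closed under kernels of admissible epimorphisms in $\sE_R$ --- which follows from the exactness of the $\Gamma_j$, the coascent and codescent for $\sE$ and $\sB$, and the heredity of each $(\sA_j,\sB_j)$. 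For the $\Ext^1$-orthogonality, the filtered/summand shape of $\sA'_R$ reduces the claim to $\Ext^1_{\sE_R}(A_j,B)=0$ for $A_j\in\sA_j$ and $B\in\sB_R$, which I would obtain from the Ext-adjunction isomorphism $\Ext^1_{\sE_R}(A_j,B)\simeq\Ext^1_{\sE_j}(A_j,\Gamma_j B)$ (Lemma~\ref{categorical-ext-adjunction-lemma}(a), or~\eqref{ext-hom-isomorphism} since $B$ is contraadjusted) together with $\Gamma_j B\in\sB_j$ by coascent for $\sB$ and the orthogonality in $(\sA_j,\sB_j)$.

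The substance of the proof is the construction of the approximation sequences. First I would build the special preenvelope sequences by the increasing induction dual to the precover construction of Proposition~\ref{locality-antilocality-main-prop}: given $E\in\sE_R$, produce admissible monomorphisms $E=E(0)\rightarrowtail E(1)\rightarrowtail\dotsb\rightarrowtail E(d)$ in $\sE_R$ such that the cokernel of $E(j-1)\to E(j)$ is restricted from an object of $\sA_j$ and $\Gamma_k E(j)\in\sB_k$ for all $1\le k\le j$. At the $j$-th step one takes a special preenvelope $0\to\Gamma_j E(j-1)\to B_j\to A_j\to0$ with respect to $(\sA_j,\sB_j)$ in $\sE_j$, views it in $\sE_R$ by restriction of scalars, and lets $E(j)$ be the pushout of $B_j\leftarrow\Gamma_j E(j-1)\to E(j-1)$ along the counit, so that $0\to E(j-1)\to E(j)\to A_j\to0$; applying $\Gamma_j$ and using its exactness and idempotence gives $\Gamma_j E(j)\cong B_j\in\sB_j$, and applying $\Gamma_k$ for $k<j$ with the induction hypothesis, the heredity and extension-closure of $\sB_k$, and the coascent and direct image for $\sB$ gives $\Gamma_k E(j)\in\sB_k$. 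Then $E(d)\in\sB_R$ by codescent and the cokernel of $E\rightarrowtail E(d)$ is filtered by the $A_j$, hence in $\sA'_R$. Having the preenvelopes, I would produce the precover sequences by the dual of the Salce construction from the original proof: for $E\in\sE_R$ choose special precovers $0\to B'_k\to A'_k\to\Gamma_k E\to0$ in $\sE_k$, assemble the admissible epimorphism $\bigoplus_{k=1}^d A'_k\to\bigoplus_{k=1}^d\Gamma_k E\to E$ (the second arrow via Lemma~\ref{colocalization-admissible-epimorphism-lemma}), verify that its kernel $K$ lies in $\sE_R$ by applying the $\Gamma_j$ and using codescent for $\sE$, take the special preenvelope $0\to K\to B\to A'''\to0$ of $K$ constructed above, and form the pushout of $\bigoplus_k A'_k\leftarrow K\to B$: its middle term $A$ fits into $0\to\bigoplus_k A'_k\to A\to A'''\to0$, hence carries a filtration with bottom layer the direct sum $\bigoplus_k A'_k$ and the remaining layers coming from the $\sA_j$, so $A\in\sA'_R$, while the other projection gives the precover sequence $0\to B\to A\to E\to0$.

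I expect the delicate point to be the bookkeeping in these two pushout constructions: one must check that $\Gamma_j$ sends the pushout square defining $E(j)$ to a pushout square in which the counit leg becomes invertible --- so that $\Gamma_j E(j)$ really equals $B_j$ --- and one must arrange the filtration layers so that exactly one direct-sum layer occurs, at the bottom, matching the definition of $\sA'_R$ rather than producing an object with two direct-sum layers. Everything else --- the closure properties of $\sB_R$, the $\Ext^1$-vanishing, and the membership of the various pushouts and kernels in $\sE_R$ --- should be a routine transcription of the corresponding steps in the proofs of Proposition~\ref{locality-antilocality-main-prop}, Lemma~\ref{coascent+direct-image-imply-codescent}, and Lemma~\ref{colocalization-admissible-epimorphism-lemma}.
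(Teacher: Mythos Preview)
Your proposal is correct and follows essentially the same route as the paper's proof: the same four bullet points feeding into Lemma~\ref{direct-summand-closure-is-complete-cotorsion}, the same inductive pushout construction of the special preenvelope along the counit $\Gamma_jE(j-1)\to E(j-1)$, and the same Salce-type pushout (using Lemma~\ref{colocalization-admissible-epimorphism-lemma}) to obtain the special precover with the required two-layer filtration shape on~$A$. One small slip: for heredity you want $\sB_R$ closed under \emph{cokernels of admissible monomorphisms} in~$\sE_R$ (condition~(ii)), not kernels of admissible epimorphisms; the argument you sketch from the heredity of each $(\sA_j,\sB_j)$ and codescent proves exactly that, so this is only a wording error. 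You should also note, as the paper does, that $\sB_R$ is closed under direct summands (same colocality argument), so that Lemma~\ref{direct-summand-closure-is-complete-cotorsion} yields $(\sA'_R,\sB_R)$ rather than $(\sA'_R,\sB_R^\oplus)$.
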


\begin{proof}
 We will prove the following properties:
\begin{itemize}
\item the class $\sB_R$ is closed under direct summands and cokernels
of admissible monomorphisms in~$\sE_R$;
\item $\Ext^1_{\sE_R}(A,B)=0$ for all $A\in\sA'_R$ and $B\in\sB_R$;
\item every module/complex $E\in\sE_R$ admits a special preenvelope
sequence~\eqref{spec-preenvelope-sequence} in $\sE_R$ with
$B\in\sB_R$ and $A'\in\sA'_R$;
\item every module/complex $E\in\sE_R$ admits a special precover
sequence~\eqref{spec-precover-sequence} in $\sE_R$ with
$B'\in\sB_R$ and $A\in\sA'_R$.
\end{itemize}
 Then it will follow by
Lemma~\ref{direct-summand-closure-is-complete-cotorsion} that
$(\sA'_R,\sB_R)$ is a cotorsion pair in $\sE_R$, which is clearly
hereditary and complete.

 Indeed, the classes $\sB_j=\sB_{R[s_j^{-1}]}$ are closed under direct
summands and cokernels of admissible monomorphisms in 
$\sE_j=\sE_{R[s_j^{-1}]}$, since $(\sA_j,\sB_j)$ is a hereditary
cotorsion pair in~$\sE_j$.
 As both the systems of classes are colocal by assumption and
the colocalization functor
\begin{equation} \label{colocalization-functor}
 \Hom_R(R[s_j^{-1}],{-})\:\sE_R\lrarrow\sE_j
\end{equation}
is exact (since it is exact as a functor $\Hom_R(R[s_j^{-1}],{-})\:
\sK_R^\cta\rarrow\sK_{R[s_j^{-1}]}^\cta$), it follows that the class
$\sB_R$ is closed under direct summands and cokernels of admissible
monomorphisms in~$\sE_R$.
 This uses the coascent for $\sE$ and the codescent for $\sB$,
together with the assumption that $\sE_R\subset\sK_R^\cta$.

 To prove the $\Ext^1$\+orthogonality, it suffices to check that
$\Ext_{\sE_R}^1(A_j,B)=0$ for all $A_j\in\sA_j$ and $B\in\sB_R$.
 Indeed, the $\Ext^1$\+adjunction isomorphism
$$
 \Ext_{\sE_R}^1(A_j,B)\simeq
 \Ext_{\sE_j}^1(A_j,\Hom_R(R[s_j^{-1}],B))
$$
holds by Lemma~\ref{categorical-ext-adjunction-lemma}(a), and it
remains to use the coascent for $\sB$ and
the $\Ext^1$\+or\-thog\-o\-nal\-ity of the classes $\sA_j$ and
$\sB_j$ in~$\sE_j$.

 The key step is the construction of the special preenvelope sequences.
 We follow the construction of~\cite[Lemma~4.2.2 or~4.3.1]{Pcosh}
specialized to the case of an affine scheme~$X$.
 Let us \emph{warn} the reader that what we call \emph{colocal} in
this paper is called ``local'' in~\cite{Pcosh}, while what we call
\emph{antilocal} in this paper is called ``colocal'' in~\cite{Pcosh}.
 The ``colocally projective'' contraherent cosheaves of
the terminology of~\cite{Pcosh} might be simply called \emph{antilocal}
in the terminology of the present paper (see the terminological
discussion in~\cite[Section~5.2]{Pphil}).

 Let $E\in\sE_R$ be an object.
 We proceed by increasing induction on $0\le j\le d$, constructing
a sequence of admissible monomorphisms
$$
 E=E(0)\,\rightarrowtail\,E(1)\,\rightarrowtail\,\dotsb
 \,\rightarrowtail\,E(d-1)\,\rightarrowtail\,E(d)
$$
in $\sE_R$ with the following properties.
 Firstly, the cokernel $A_j$ of the admissible monomorphism
$E(j-1)\rarrow E(j)$, \ $1\le j\le d$, comes via the restriction
of scalars from a module/complex belonging to~$\sA_j$.
 Secondly, for every $1\le k\le j\le d$, the module/complex
$\Hom_R(R[s_k^{-1}],E(j))$ belongs to the class~$\sB_k$.

 Then, in particular, $\Hom_R(R[s_j^{-1}],E(d))\in\sB_j$ for all
$1\le j\le d$, and it follows that $E(d)\in\sB_R$ (since the class
$\sB$ satisfies codescent).
 On the other hand, the cokernel $A''$ of the composition
$E\rightarrowtail E(d)$ of the sequence of admissible monomorphisms
above is an $R$\+module or complex of $R$\+modules filtered by
the $R[s_j^{-1}]$\+modules or complexes of $R[s_j^{-1}]$\+modules
$A_j\in\sA_j$, so $A''\in\sA'_R$.

 The induction base, $j=0$, is dealt with by setting $E(0)=E$.
 Assume that we have already constructed admissible monomorphisms
$E(0)\rightarrowtail E(1)\rightarrowtail\dotsb\rightarrowtail E(j-2)
\rightarrowtail E(j-1)$ with the desired properties, for some $j\ge1$.
 Let us construct an admissible monomorphism
$E(j-1)\rightarrowtail E(j)$.
 
 We have $\Hom_R(R[s_j^{-1}],E(j-1))\in\sE_j$, since $E(j-1)\in\sE_R$
and the class $\sE$ satisfies coascent.
 Choose a special preenvelope sequence
\begin{equation} \label{building-block-preenvelope}
 0\lrarrow\Hom_R(R[s_j^{-1}],E(j-1))\lrarrow B_j\lrarrow A_j\lrarrow0
\end{equation}
with respect to the cotorsion pair $(\sA_j,\sB_j)$ in the exact
category~$\sE_j$.
 Taking the restriction of scalars, one can
view~\eqref{building-block-preenvelope} as a short exact sequence
in $\sE_R$ with $B_j\in\sB_R$, since the classes $\sE$ and $\sB$
satisfy the direct image condition.
 Consider the pushout diagram in the exact category~$\sE_R$
\begin{equation} \label{induction-step-pushout-diagram}
\begin{gathered}
 \xymatrix{
  0 \ar[r] & \Hom_R(R[s_j^{-1}],E(j-1)) \ar[r] \ar[d]
  & B_j \ar[r] \ar[d] & A_j \ar[r] \ar@{=}[d] & 0 \\
  0 \ar[r] & E(j-1) \ar[r] & E(j) \ar[r] & A_j \ar[r] & 0
 }
\end{gathered}
\end{equation}
where the colocalization map $\Hom_R(R[s_j^{-1}],E(j-1))\rarrow E(j-1)$
is induced by the $R$\+module morphism $R\rarrow R[s_j^{-1}]$.
 The object $E(j)\in\sE_R$ is constructed as the pushout object in
this diagram.

 To show that $\Hom_R(R[s_j^{-1}],E(j))\in\sB_j$, it suffices to point
out again that the colocalization
functor~\eqref{colocalization-functor} is exact, so it preserves
pushouts of short exact sequences.
 Therefore, the induced morphism $B_j=\Hom_R(R[s_j^{-1}],B_j)\rarrow
\Hom_R(R[s_j^{-1}],E(j))$ is an isomorphism.

 To prove that $\Hom_R(R[s_k^{-1}],E(j))\in\sB_k$ for $1\le k<j$, we
use the induction assumption telling that $\Hom_R(R[s_k^{-1}],E(j-1))
\in\sB_k$.
 Applying to~\eqref{induction-step-pushout-diagram} the colocalization
functor $M\longmapsto\Hom_R(R[s_k^{-1}],M)$, we obtain a pushout
diagram of short exact sequences in~$\sE_k$
\begin{equation} \label{induction-step-pushout-diagram-colocalized}
\begin{gathered}
 \xymatrix{
  \Hom_R(R[s_k^{-1},s_j^{-1}],E(j-1)) \ar@{>->}[r] \ar[d]
  & \Hom_R(R[s_k^{-1}],B_j) \ar@{->>}[r] \ar[d]
  & \Hom_R(R[s_k^{-1}],A_j) \ar@{=}[d] \\
  \Hom_R(R[s_k^{-1}],E(j-1)) \ar@{>->}[r]
  & \Hom_R(R[s_k^{-1}],E(j)) \ar@{->>}[r]
  & \Hom_R(R[s_k^{-1}],A_j)
 }
\end{gathered}
\end{equation}

 Now we have $\Hom_R(R[s_k^{-1},s_j^{-1}],E(j-1))\in\sB_k$, since
$\Hom_R(R[s_k^{-1}],E(j-1))\in\sB_k$ and the class $\sB$ satisfies
the coascent and direct image conditions.
 Similarly $\Hom_R(R[s_k^{-1}],B_j)\in\sB_k$, since $B_j\in\sB_j$.
 The cotorsion pair $(\sA_k,\sB_k)$ in the exact category $\sE_k$ is
hereditary by assumption, so it follows from the short exact
sequence in the upper line
of~\eqref{induction-step-pushout-diagram-colocalized} that
$\Hom_R(R[s_k^{-1}],A_j)\in\sB_k$.
 Finally, in the lower line of the diagram we have
$\Hom_R(R[s_k^{-1}],E(j-1))\in\sB_k$ and $\Hom_R(R[s_k^{-1}],A_j)
\in\sB_k$, hence $ \Hom_R(R[s_k^{-1}],E(j))\in\sB_k$, as
the class $\sB_k$ is closed under extensions in~$\sE_k$.

 We have constructed the special preenvelope sequence
\begin{equation} \label{special-preenvelope-sequence-constructed}
 0\lrarrow E\lrarrow E(d)\lrarrow A''\lrarrow0
\end{equation}
in $\sE_R$ with $E(d)\in\sB_R$ and $A''\in\sA'_R$.
 In order to finish the proof of the proposition, it remains to
produce the special precover sequences.
 Here we use the construction from the proof of the Salce lemma
(Lemma~\ref{salce-lemma}).

 Let $F\in\sE_R$ be an object.
 We start with choosing special precover sequences
$$
 0\lrarrow B'_k\lrarrow A'_k\lrarrow\Hom_R(R[s_k^{-1}],F)\lrarrow0
$$
with respect to the cotorsion pairs $(\sA_k,\sB_k)$ in the exact
categories $\sE_k$ for all $1\le k\le d$.
 Recall that the natural map $\bigoplus_{k=1}^d\Hom_R(R[s_k^{-1}],F)
\rarrow F$ is an epimorphism in $\sK_R$ and an admissible epimorphism
in $\sK_R^\cta$ by
Lemma~\ref{colocalization-admissible-epimorphism-lemma}.
 Consequently, the composition $\bigoplus_{k=1}^d A'_k\rarrow
\bigoplus_{k=1}^d\Hom_R(R[s_k^{-1}],F)\rarrow F$ is also an admissible
epimorphism in~$\sK_R^\cta$.
 So we have an (admissible) short exact sequence
\begin{equation} \label{generation-sequence-in-salce-lemma}
 0\lrarrow E\lrarrow\bigoplus\nolimits_{k=1}^d A'_k\lrarrow F\lrarrow0
\end{equation}
in the exact category~$\sK_R^\cta$.
 In order to show that~\eqref{generation-sequence-in-salce-lemma} is
a short exact sequence in $\sE_R$, we need to prove that $E\in\sE_R$.

 Since the class $\sE$ satisfies codescent by assumption and
$E\in\sK_R^\cta$, it suffices to check that $\Hom_R(R[s_j^{-1}],E)
\in\sE_j$ for all $1\le j\le d$.
 The admissible epimorphism $A'_j\rarrow\Hom_R(R[s_j^{-1}],F)$ in
$\sE_j$ factorizes into the composition
$$
 A'_j\lrarrow\bigoplus\nolimits_{k=1}^d\Hom_R(R[s_j^{-1}],A'_k)
 \lrarrow\Hom_R(R[s_j^{-1}],F)
$$
where $\Hom_R(R[s_j^{-1}],A'_k)\in\sE_j$ since $A'_k\in\sA_k\subset
\sE_k$ and the class $\sE$ satisfies the coascent and direct image
conditions.
 So the morphism $A'_j\rarrow\bigoplus_{k=1}^d\Hom_R(R[s_j^{-1}],A'_k)$
is a (split) admissible monomorphism in~$\sE_j$.
 By the dual version of~\cite[Exercise~3.11(i)]{Bueh}, it follows that
$\bigoplus_{k=1}^d\Hom_R(R[s_j^{-1}],A'_k)\rarrow\Hom_R(R[s_j^{-1}],F)$
is an admissible epimorphism in~$\sE_j$.
 As $\Hom_R(R[s_j^{-1}],E)$ is the kernel of the latter morphism in
$\sK_{R[s_j^{-1}]}^\cta$, we can conclude that
$\Hom_R(R[s_j^{-1}],E)\in\sE_j$.

 We have shown that~\eqref{generation-sequence-in-salce-lemma} is
an (admissible) short exact sequence in~$\sE_R$.
 Now we can proceed with the construction from the proof of the Salce
lemma.
 Applying the construction of the special preenvelope sequence spelled
out above to the object $E\in\sE_R$, we obtain a short exact
sequence~\eqref{special-preenvelope-sequence-constructed}.
 Consider the pushout diagram
of~\eqref{generation-sequence-in-salce-lemma}
and~\eqref{special-preenvelope-sequence-constructed}
in the exact category~$\sE_R$
\begin{equation} \label{salce-pushout-diagram}
\begin{gathered}
 \xymatrix{
  E \ar@{>->}[r] \ar@{>->}[d] & A' \ar@{->>}[r] \ar@{>->}[d]
  & F \ar@{=}[d] \\
  E(d) \ar@{>->}[r] \ar@{->>}[d] & A \ar@{->>}[r] \ar@{->>}[d] & F \\
  A'' \ar@{=}[r] & A''
 }
\end{gathered}
\end{equation}
where $A'=\bigoplus_{k=1}^d A'_k$ and $A$ is the pushout of the pair of
admissible monomorphisms $E\rarrow A'$ and $E\rarrow E(d)$ in~$\sE_R$.
 It remains to say that the middle horizontal short exact sequence
$0\rarrow E(d)\rarrow A\rarrow F\rarrow0$
in~\eqref{salce-pushout-diagram} is the desired special precover
sequence for the object $F\in\sE_R$, where one has $A\in\sA'_R$ in
view of the middle vertical short exact sequence $0\rarrow A'
\rarrow A\rarrow A''\rarrow0$.
\end{proof}

 We are done with the proof of
Proposition~\ref{antilocality-colocality-main-prop}, and now we
can prove part~(b) of the theorem.

\begin{proof}[Proof of
Theorem~\ref{antilocality-colocality-theorem}(b)]
 By assumptions, both the classes $\sE$ and $\sB$ are very colocal;
and the full subcategory $\sE_R$ is closed under extensions in $\sK_R$
for every $R\in\R$, so it inherits an exact category structure.
 Hence Proposition~\ref{antilocality-colocality-main-prop}
is applicable.

 Let $R\in\R$ be a ring and $s_1$,~\dots, $s_d\in R$ be a collection
of elements generating the unit ideal.
 From these data, the proposition produces a cotorsion pair
$(\sA'_R,\sB_R)$ in $\sE_R$, while in the assumptions of the theorem
we are given a cotorsion pair $(\sA_R,\sB_R)$ in~$\sE_R$.
 It follows that
\begin{itemize}
\item $\sA_R=\sA'_R$;
\item the class $\sA'_R$ is closed under extensions in $\sE_R$,
and consequently also in $\sK^\cta_R$ and in~$\sK_R$.
\end{itemize}
 These observations prove that the class $\sA$ is antilocal.

 In addition, they provide a precise form of the finite filtration
appearing in the antilocality condition for $\sA$, with a specific
bound on the length of the filtration.
 One can certainly say that the filtration length does not exceed~$2d$.
\end{proof}

\begin{proof}[Proof of
Theorem~\ref{antilocality-colocality-theorem}(c)]
 Let $R\in\R$ be a ring and $s_1$,~\dots, $s_d\in R$ be a collection of
elements generating the unit ideal.
 We still keep the notation $\sA_j=\sA_{R[s_j^{-1}]}$, \
$\sB_j=\sA_{R[s_j^{-1}]}$, and $\sE_j=\sE_{R[s_j^{-1}]}$, where
$1\le j\le d$.
 Suppose given a module/complex $D\in\sA_R\cap\sB_R$.
 Then $\Hom_R(R[s_j^{-1}],D)\in\sB_j$ for every $1\le j\le d$, since
the class $\sB$ satisfies coascent.
 For every $1\le k\le d$, pick a special precover sequence
$$
 0\lrarrow B_k\lrarrow A_k\lrarrow\Hom_R(R[s_k^{-1}],D)\lrarrow0
$$
with respect to the cotorsion pair $(\sA_k,\sB_k)$ in the exact
category~$\sE_k$; so $B_k\in\sB_k$ and $A_k\in\sA_k$.
 Then we have $\Hom_R(R[s_k^{-1}],D)\in\sB_k$ and $B_k\in\sB_k$,
hence $A_k\in\sA_k\cap\sB_k$.

 By Lemma~\ref{colocalization-admissible-epimorphism-lemma}, the map
$\bigoplus_{k=1}^d\Hom_R(R[s_k^{-1}],D)\rarrow D$ is an admissible
epimorphism in~$\sK_R^\cta$.
 The composition of this admissible epimorphism with the direct sum
of the admissible epimorphisms $A_k\rarrow\Hom_R(R[s_k^{-1}],D)$
(viewed as morphisms in $\sK_R^\cta$ via the restriction of scalars)
provides an admissible epimorphism $\bigoplus_{k=1}^d A_k\rarrow D$
in~$\sK_R^\cta$.
 Hence we have a short exact sequence
\begin{equation} \label{kernel-special-precover}
 0\lrarrow B\lrarrow\bigoplus\nolimits_{k=1}^d A_k\lrarrow D\lrarrow0
\end{equation}
in the exact category~$\sK_R^\cta$.
 We have already seen in the construction of the special precover
in the proof of Proposition~\ref{antilocality-colocality-main-prop}
that $B\in\sE_R$.
 Let us show that in the situation at hand $B\in\sB_R$.

 The argument is the same.
 Since the class $\sB$ satisfies codescent by assumption, it suffices
to check that $\Hom_R(R[s_j^{-1}],B)\in\sB_j$ for all $1\le j\le d$.
 As a full subcategory closed under extensions in $\sE_j$,
the additive category $\sB_j$ inherits an exact category structure.
 The morphism $A_j\rarrow\Hom_R(R[s_j^{-1}],D)$ is an admissible
epimorphism in $\sB_j$, since it is an admissible epimorphism in $\sE_j$
between two objects from $\sB_j$ with the kernel $B_j\in\sB_j$.
 On the other hand, this morphism factorizes into the composition
$$
 A_j\lrarrow\bigoplus\nolimits_{k=1}^d\Hom_R(R[s_j^{-1}],A_k)
 \lrarrow\Hom_R(R[s_j^{-1}],D)
$$
with $\Hom_R(R[s_j^{-1}],A_k)\in\sB_j$ since $A_k\in\sB_k$ and
the class $\sB$ satisfies the coascent and direct image conditions.
 Hence the morphism $A_j\rarrow\bigoplus_{k=1}^d\Hom_R(R[s_j^{-1}],A_k)$
is a (split) admissible monomorphism in~$\sB_j$.
 So the dual version of~\cite[Exercise~3.11(i)]{Bueh} tells that
$\bigoplus_{k=1}^d\Hom_R(R[s_j^{-1}],A_k)\rarrow\Hom_R(R[s_j^{-1}],D)$
is an admissible epimorphism in~$\sB_j$.
 As $\Hom_R(R[s_j^{-1}],B)$ is the kernel of this morphism in
$\sK_{R[s_j^{-1}]}^\cta$, we can conclude that $\Hom_R(R[s_j^{-1}],B)
\in\sB_j$ and $B\in\sB_R$.

 Similarly to the proof of
Theorem~\ref{locality-antilocality-theorem}(c), as an alternative to
the reference to K\"unzer's axiom, let us spell out its proof in
the situation at hand, for the reader's convenience.
 Consider the pushout diagram in the exact category $\sE_j$, or even
in the abelian category~$\sK_{R[s_j^{-1}]}$
$$
 \xymatrix{
  B_j \ar@{>->}[r] \ar@{>->}[d] & A_j \ar@{->>}[r] \ar@{>->}[d]
  & \Hom_R(R[s_j^{-1}],D) \ar@{=}[d] \\
  \Hom_R(R[s_j^{-1}],B) \ar@{>->}[r] \ar@{->>}[d]
  & \Hom_R(R[s_j^{-1}],A') \ar@{->>}[r] \ar@{->>}[d]
  & \Hom_R(R[s_j^{-1}],D) \\
  \Hom_R(R[s_j^{-1}],A'') \ar@{=}[r] & \Hom_R(R[s_j^{-1}],A'')
 }
$$
where $A'=\bigoplus_{k=1}^d A_k$ and $A''=\bigoplus_{k\ne j} A_k$.
 We have $B_j\in\sB_j$ and $\Hom_R(R[s_j^{-1}],A_k)\in\sB_j$ for all
$1\le k\le d$, so it follows from the leftmost vertical short exact
sequence $0\rarrow B_j\rarrow\Hom_R(R[s_j^{-1}],B)\rarrow
\Hom_R(R[s_j^{-1}],A'')\rarrow0$ that $\Hom_R(R[s_j^{-1}],B)\in\sB_j$.
 Indeed, the class $\sB_j$ is closed under extensions in $\sE_j$
and the class $\sE_j$ is closed under extensions in $\sK_{R[s_j^{-1}]}$
by the assumptions of the theorem.

 Finally, we use the observation that $\Ext_{\sE_R}^1(D,B)=0$, since
$D\in\sA_R$ and $B\in\sB_R$.
 Thus it follows from the short exact
sequence~\eqref{kernel-special-precover} that $D$ is a direct summand
in $\bigoplus_{k=1}^d A_k$.
 It remains to recall that $A_k\in\sA_k\cap\sB_k$.
\end{proof}

\begin{rem}
 Similarly to Proposition~\ref{antilocality-colocality-main-prop}, one
could relax the assumptions of
Theorem~\ref{antilocality-colocality-theorem} by requiring the full
subcategory $\sE_R$ to inherit an exact category structure from
the ambient exact category $\sK_R^\cta$ or from the abelian exact
structure of $\sK_R$, but not necessarily to be closed under extensions
or direct summands in~$\sK_R^\cta$.
 All the arguments in the proof can be made to work in this context.
 The only difference is that then one would have to speak about
 the class $\sA$ being antilocal \emph{within} $\sE$ rather than
in $\sK$, in the sense expained in
Remark~\ref{relative-antilocality-remark} above.
\end{rem}

\Section{Examples of Local and Antilocal Classes}
\label{local-antilocal-classes-examples-secn}

 In this section we discuss examples of cotorsion pairs formed by
local classes $\sA$ and antilocal classes $\sB$, as described by
Theorem~\ref{locality-antilocality-theorem}.
 In all the examples in this section, $\R$ is the class of all
commutative rings.

\begin{ex} \label{injectivity-strongly-antilocal-example}
 Let $\sE_R=\sK_R=R\Modl$ be the abelian category of $R$\+modules.
 Consider the following hereditary complete cotorsion pair in
$R\Modl$: the left class $\sA_R$ is the class of all $R$\+modules,
$\sA_R=R\Modl$, and the right class $\sB_R$ is the class of injective
$R$\+modules, $\sB_R=R\Modl^\inj$.

 Then the class $\sA=\sE$ is obviously very local, and
the class $\sE_R$ is closed under extensions and direct summands
in~$\sK_R$.
 So Theorem~\ref{locality-antilocality-theorem}(c) tells that
the class of injective modules $\sA_R\cap\sB_R=R\Modl^\inj$
is strongly antilocal.
\end{ex} 

\begin{ex} \label{cotorsion-antilocal-example}
 Let $\sE_R=\sK_R=R\Modl$ be the abelian category of $R$\+modules.
 Consider the following hereditary complete cotorsion pair in
$R\Modl$: the left class $\sA_R$ is the class of flat $R$\+modules,
$\sA_R=R\Modl_\fl$, and the right class $\sB_R$ is the class of
cotorsion $R$\+modules, $\sB_R=R\Modl^\cot$
(see Section~\ref{locality-first-examples-secn} for the definition).

 Then the class $\sE$ is obviously very local, and the class $\sA$ is
very local by Example~\ref{flatness-local-example}.
 Thus the class $\sB$ of cotorsion modules is antilocal by
Theorem~\ref{locality-antilocality-theorem}(b).
 (See Example~\ref{cotorsion-antilocal-via-colocality} below for
an alternative proof of this fact.)

 Furthermore, the class of flat cotorsion modules
$R\Modl_\fl^\cot=\sA_R\cap\sB_R$ is strongly antilocal by
Theorem~\ref{locality-antilocality-theorem}(c).
 Notice that, for Noetherian rings, Enochs' classification of flat
cotorsion modules~\cite[Section~2]{En2} is a much stronger result
(but our assertion is applicable to arbitrary commutative rings).
\end{ex}

\begin{ex} \label{contraadjustedness-antilocal-example}
 Let $\sE_R=\sK_R=R\Modl$ be the abelian category of $R$\+modules.
 Consider the following hereditary complete cotorsion pair in
$R\Modl$: the left class $\sA_R$ is the class of very flat
$R$\+modules, $\sA_R=R\Modl_\vfl$, and the right class $\sB_R$ is
the class of contraadjusted $R$\+modules, $\sB_R=R\Modl^\cta$
(see Section~\ref{locality-first-examples-secn} for the definition).

 Then the class $\sE$ is obviously very local, and the class $\sA$ is
very local by Example~\ref{very-flatness-local-example}.
 Thus the class $\sB$ of contraadjusted modules is antilocal by
Theorem~\ref{locality-antilocality-theorem}(b).
 (See Example~\ref{contraadjustedness-antilocal-via-colocality} below
for an alternative proof of this fact.)

 Furthermore, the class of very flat contraadjusted modules
$R\Modl_\vfl^\cta=\sA_R\cap\sB_R$ is strongly antilocal by
Theorem~\ref{locality-antilocality-theorem}(c).
\end{ex}

\begin{ex} \label{contraadjusted-flatness-antilocal-example}
 Let $\sK_R=R\Modl$ be the abelian category of $R$\+modules and
$\sE_R\subset\sK_R$ be the full subcategory of flat $R$\+modules,
$\sE_R=R\Modl_\fl$.
 Then the very flat cotorsion pair $(R\Modl_\vfl$, $R\Modl^\cta)$ in
$R\Modl$ restricts to the full subcategory $\sE_R$, that is, the pair
of classes of very flat $R$\+modules $\sA_R=R\Modl_\vfl$ and flat
contraadjusted $R$\+modules $\sB_R=R\Modl_\fl^\cta=R\Modl_\fl\cap
R\Modl^\cta$ is a hereditary complete cotorsion pair $(\sA_R,\sB_R)$
in~$\sE_R$.
 This holds because the class $\sE_R$ is closed under extensions
and kernels of epimorphisms in $\sK_R$, and all very flat $R$\+modules
belong to~$\sE_R$ (see Lemmas~\ref{cotorsion-pair-restricts}(a)
and~\ref{restricted-cotorsion-hereditary}).

 Now both the classes $\sE$ and $\sA$ are very local, and the class
$\sE_R$ is closed under extensions and direct summands in~$\sK_R$.
 Thus the class $\sB$ of flat contraadjusted modules is antilocal by
Theorem~\ref{locality-antilocality-theorem}(b).
 (See Example~\ref{contraadjusted-flatness-antilocal-via-colocality}
below for an alternative proof of this result.)
\end{ex}

\begin{ex} \label{homotopy-injectivity-antilocal-example}
 Let $\sE_R=\sK_R=\bC(R\Modl)$ be the abelian category of complexes of
$R$\+modules.
 Consider the following hereditary complete cotorsion pair in
$\bC(R\Modl)$: the left class $\sA_R=\bC_\ac(R\Modl)$ is the class of
acyclic complexes of $R$\+modules, and the right class
$\sB_R=\bC^\hin(R\Modl^\inj)$ is the class of homotopy injective
complexes of injective $R$\+modules (as defined in
Section~\ref{locality-first-examples-secn}).
 The pair of classes ($\bC_\ac(R\Modl)$, $\bC^\hin(R\Modl^\inj)$) is
indeed a complete cotorsion pair in $\bC(R\Modl)$ by~\cite{EJX},
\cite[Theorem~2.3.13]{Hov-book}, \cite[Example~3.2]{Hov},
\cite[Proposition~1.3.5(2)]{Bec}, or~\cite[Theorem~8.4]{PS4}.
 This cotorsion pair is hereditary, because the class of acyclic
complexes is obviously closed under the kernels of epimorphisms (cf.\
Example~\ref{homotopy-injectivity-of-injectives-colocal-example}).

 The class $\sE$ of all complexes is obviously very local.
 The class $\sA$ of acyclic complexes is very local as well.
 More generally, the complex of $S$\+modules $S\ot_R X^\bu$ is acyclic
for any acyclic complex of $R$\+modules $X^\bu$ and any commutative
ring homomorphism $R\rarrow S$ making $S$ a flat $R$\+module.
 The underlying complex of $R$\+modules of an acyclic complex of
$S$\+modules is obviously acyclic for any ring homomorphism
$R\rarrow S$.
 So the ascent and direct image conditions hold, and
Lemma~\ref{ascent+direct-image-imply-descent} provides the (well-known)
implication that the acyclicity satisfies descent.

 Thus we can conclude that the class $\sB$ of homotopy injective
complexes of injective modules is antilocal by
Theorem~\ref{locality-antilocality-theorem}(b).
 Another proof of this fact can be obtained as a particular case
of Corollary~\ref{many-colocal-are-antilocal} below using
Example~\ref{homotopy-injectivity-of-injectives-colocal-example}.
\end{ex}

\begin{rem} \label{diagrams-termwise-in-class-remark}
 In the following Example~\ref{termwise-injectivity-antilocal-example}
we discuss antilocality of the class of all complexes of injective modules.
 Notice that, for any \emph{local} class of modules $\sL_R\subset
R\Modl$, the class $\bC(\sL)\subset\bC(R\Modl)$ of all complexes with
the terms in $\sL$ is obviously local.
 Likewise, for any \emph{colocal} class of modules $\sL_R\subset
R\Modl^\cta$, the class $\bC(\sL)\subset\bC(R\Modl^\cta)$ of all
complexes with the terms in $\sL$ is obviously colocal.
 However, given an antilocal class of modules
$\sF\subset R\Modl$, it is \emph{not at all clear} why the class
$\bC(\sF)\subset\bC(R\Modl)$ of all complexes with the terms in $\sF$
should be \emph{antilocal}.
 The same discussion applies, more generally, to categories of
diagrams of modules from various classes, etc.
 So, the example is nontrivial.
\end{rem}

 A complex of $R$\+modules $X^\bu$ is said to be \emph{coacyclic in
the sense of Becker} (``Becker-coacyclic'' for brevity)
\cite[Proposition~1.3.6(2)]{Bec}, \cite[Section~9]{PS4} if, for any
complex of injective $R$\+modules $J^\bu$, any morphism of complexes
of $R$\+modules $X^\bu\rarrow J^\bu$ is homotopic to zero.

\begin{ex} \label{termwise-injectivity-antilocal-example}
 Let $\sE_R=\sK_R=\bC(R\Modl)$ be the abelian category of complexes of
$R$\+modules.
 Consider the following hereditary complete cotorsion pair in
$\bC(R\Modl)$: the left class $\sA_R=\bC_\coac(R\Modl)$ is the class
of Becker-coacyclic complexes of $R$\+modules, and the right class
$\sB_R=\bC(R\Modl^\inj)$ is the class of all complexes of injective
$R$\+modules.
 The pair of classes ($\bC_\coac(R\Modl)$, $\bC(R\Modl^\inj)$) is
indeed a complete cotorsion pair in $\bC(R\Modl)$
by~\cite[Proposition~1.3.6(2)]{Bec}, \cite[Theorem~4.2]{Gil},
or~\cite[Theorem~9.3]{PS4}.
 This cotorsion pair is hereditary, because the class of all complexes
of injective modules is obviously closed under the cokernels of
monomorphisms.

 The class $\sA$ of Becker-coacyclic complexes satisfies the ascent
and direct image conditions, because all injective modules are
contraadjusted and the class $\sB$ of all complexes of injective modules
satisfies the coascent and direct image conditions (see
Example~\ref{injectivity-colocal-example} and
Proposition~\ref{left-right-ascent-coascent-direct-image-prop}).
 Since the class $\sA$ is also closed under kernels of epimorphisms,
it is very local by Lemma~\ref{ascent+direct-image-imply-descent}.
 We refer to~\cite[Section~A.2]{Psemten} for a further discussion of
the locality properties of coacyclic complexes.

 Thus we can conclude that the class $\sB$ of all complexes of injective
modules is antilocal by Theorem~\ref{locality-antilocality-theorem}(b).
 Another proof of this fact can be obtained as a particular case
of Corollary~\ref{many-colocal-are-antilocal} below using
Example~\ref{injectivity-colocal-example}
with Remark~\ref{diagrams-termwise-in-class-remark}.
\end{ex}

\begin{rem}
 Of course, the finite filtration appearing in the definition of
antilocality for the classes of homotopy injective complexes of
injective modules or all complexes of injective modules (from
Examples~\ref{homotopy-injectivity-antilocal-example}
and~\ref{termwise-injectivity-antilocal-example}) is always
\emph{termwise} split (as any filtration by injective modules is split).
 However, these filtrations do \emph{not} split in the abelian
category of complexes (generally speaking).
 In other words, the mentioned two classes of complexes of modules are
\emph{not} strongly antilocal.

 Indeed, let $M$ be an $R$\+module and $J^\bu$ be its injective
coresolution.
 Then $J^\bu$ is a bounded below complex of injective modules; all
such complexes are homotopy injective.
 Still, $J^\bu$ is usually \emph{not} isomorphic to a direct summand of
a direct sum of complexes of $R[s_j^{-1}]$\+modules (with $1\le j\le d$
as in our usual notation); for otherwise one could pass to
the cohomology modules and conclude that $M$ is a direct summand of
a direct sum of $R[s_j^{-1}]$\+modules, which is usually not the case.
 (Cf.\ Example~\ref{all-modules-not-antilocal}.)
\end{rem}

\begin{ex}
 Let $\sK_R=\bC(R\Modl)$ be the abelian category of complexes of
$R$\+mod\-ules and $\sE_R\subset\sK_R$ be the full subcategory of
acyclic complexes, $\sE_R=\bC_\ac(R\Modl)$.
 Then the Becker coderived cotorsion pair ($\bC_\coac(R\Modl)$,
$\bC(R\Modl^\inj)$) in $\bC(R\Modl)$ (see
Example~\ref{termwise-injectivity-antilocal-example}) restricts to
the full subcategory $\sE_R$, that is, the pair of classes of
Becker-coacyclic complexes of $R$\+modules $\sA_R=\bC_\coac(R\Modl)$
and acyclic complexes of injective $R$\+modules $\sB_R=
\bC_\ac(R\Modl^\inj)=\bC_\ac(R\Modl)\cap\bC(R\Modl^\inj)$ is
a hereditary complete cotorsion pair $(\sA_R,\sB_R)$ in~$\sE_R$.
 This holds because the class $\sE_R$ is closed under extensions and
kernels of epimorphisms in $\sK_R$, and all Becker-coacyclic
complexes are acyclic (see Lemmas~\ref{cotorsion-pair-restricts}(a)
and~\ref{restricted-cotorsion-hereditary}).
{\hbadness=2200\par}

 Now both the classes $\sE$ and $\sA$ are very local
(by Examples~\ref{homotopy-injectivity-antilocal-example}
and~\ref{termwise-injectivity-antilocal-example}), and the class
$\sE_R$ is closed under extensions and direct summands in~$\sK_R$.
 Thus the class $\sB$ of acyclic complexes of injective modules
is antilocal by Theorem~\ref{locality-antilocality-theorem}(b).
\end{ex}

 Given a commutative ring $R$, we denote by $\bC_\ac^\bvfl(R\Modl_\vfl)$
the class of acyclic complexes of very flat $R$\+modules \emph{with
very flat modules of cocycles}.
 More generally, for any associative ring $R$, we let
$\bC_\ac^\bfl(R\Modl_\fl)$ denote the class of acyclic complexes
of flat left $R$\+modules \emph{with flat modules of cocycles}.

\begin{prop} \label{very-flat-tilde-termwise-cta-cot-cotorsion-pair}
\textup{(a)} For any commutative ring $R$, consider the following pair
of classes of complexes of $R$\+modules.
 The left class\/ $\sA_R=\bC_\ac^\bvfl(R\Modl_\vfl)$ is the class of
acyclic complexes of very flat $R$\+modules with very flat modules of
cocycles.
 The right class\/ $\sB_R=\bC(R\Modl^\cta)$ is the class of all
complexes of contraadjusted $R$\+modules.
 Then the pair of classes $(\sA_R,\sB_R)$ is a hereditary complete
cotorsion pair in the abelian category of complexes\/ $\bC(R\Modl)$.
\par
\textup{(b)}  For any associative ring $R$, consider the following pair
of classes of complexes of left $R$\+modules.
 The left class\/ $\sA_R=\bC_\ac^\bfl(R\Modl_\fl)$ is the class of
acyclic complexes of flat $R$\+modules with flat modules of cocycles.
 The right class\/ $\sB_R=\bC(R\Modl^\cot)$ is the class of all
complexes of cotorsion $R$\+modules.
 Then the pair of classes $(\sA_R,\sB_R)$ is a hereditary complete
cotorsion pair in the abelian category of complexes\/ $\bC(R\Modl)$.
\end{prop}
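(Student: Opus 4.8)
The plan is to build both cotorsion pairs on top of the corresponding hereditary complete cotorsion pair in $R\Modl$: for part~(a) this is $(R\Modl_\vfl,\>R\Modl^\cta)$, generated by $\{R[s^{-1}]:s\in R\}$ (the discussion following Lemma~\ref{contraadjusted-characterized}), and for part~(b) it is $(R\Modl_\fl,\>R\Modl^\cot)$, which is a hereditary complete cotorsion pair by the flat cover theorem~\cite{BBE}. Write $(\sA^0,\sB^0)$ for the ground pair, so that the asserted left class is $\sA_R=\bC_\ac(\sA^0)$, the class of acyclic complexes with terms \emph{and} modules of cocycles in $\sA^0$, and the asserted right class is $\sB_R=\bC(\sB^0)$, the class of complexes with terms in~$\sB^0$. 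We must check that $(\sA_R,\sB_R)$ is a cotorsion pair ($\Ext^1$\+orthogonality and $\sA_R^{\perp_1}=\sB_R$, which by Lemma~\ref{direct-summand-closure-is-complete-cotorsion}-type reasoning reduces to the approximation sequences), and that it is complete and hereditary.

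First I would identify the right class. By Lemma~\ref{disk-complexes-lemma}, $\Ext^1_{\bC(R\Modl)}(D_{n,n+1}^\bu(G),C^\bu)\simeq\Ext^1_R(G,C^n)$; since the disk complex $D_{n,n+1}^\bu(G)$ is acyclic with terms and cocycles $G$ or~$0$, it lies in $\sA_R$ whenever $G\in\sA^0$. Letting $G$ range over $\{R[s^{-1}]\}$ in case~(a) and over all flat modules in case~(b), one gets that $C^\bu$ is right $\Ext^1$\+orthogonal to these disk complexes if and only if every $C^n$ is contraadjusted (resp.\ cotorsion); hence $\sA_R^{\perp_1}\subseteq\sB_R$, and $\sB_R$ is exactly the right orthogonal of this set of disk complexes. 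Conversely, for $A^\bu\in\sA_R$ and $C^\bu\in\sB_R$ one has $\Ext^1_R(A^n,C^n)=0$, so Lemma~\ref{ext-homotopy-hom-lemma} yields $\Ext^1_{\bC(R\Modl)}(A^\bu,C^\bu)\simeq\Hom_{\bH(R\Modl)}(A^\bu,C^\bu[1])$, and this vanishes: any chain map from an acyclic complex whose cocycles lie in the left class of a \emph{hereditary} cotorsion pair into a complex with terms in the right class is null\+homotopic (equivalently, $\Hom_R^\bu(A^\bu,C^\bu)$ is acyclic). Thus $\sA_R^{\perp_1}=\sB_R$, and in particular $\Ext^1_{\bC(R\Modl)}(A^\bu,B^\bu)=0$ for $A^\bu\in\sA_R$, $B^\bu\in\sB_R$.

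For completeness, I would use that $\sB_R$ is the right orthogonal of a \emph{set}. In case~(a) take $\mathcal S=\{D_{n,n+1}^\bu(R[s^{-1}]):n\in\boZ,\ s\in R\}$ together with a projective generator of $\bC(R\Modl)$, say $\bigoplus_nD_{n,n+1}^\bu(R)$; in case~(b) take a set of flat modules deconstructing the flat cotorsion pair (whose existence is part of~\cite{BBE}), apply $D_{n,n+1}^\bu(-)$, and adjoin a projective generator. By Theorem~\ref{eklof-trlifaj-theorem} the cotorsion pair generated by $\mathcal S$ is complete with left class $\Fil(\mathcal S)^\oplus$, and since $\mathcal S^{\perp_1}=\sB_R$ this is the pair $(\Fil(\mathcal S)^\oplus,\sB_R)$. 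It remains to see $\Fil(\mathcal S)^\oplus=\sA_R$: the inclusion ``$\subseteq$'' holds because $\sA_R$ contains $\mathcal S$ and is closed under transfinite extensions and direct summands (acyclicity and membership of terms in $\sA^0$ are clearly preserved; for the cocycles one uses that the induced sequence of cocycles of a short exact sequence of acyclic complexes is exact, together with Lemma~\ref{eklof-lemma} applied to $\sA^0={}^{\perp_1}\sB^0$); the inclusion ``$\supseteq$'' holds because $\Fil(\mathcal S)^\oplus={}^{\perp_1}\sB_R$ already contains $\sA_R$ by the orthogonality established above. Heredity: in case~(a) the generators $D_{n,n+1}^\bu(R[s^{-1}])$ have projective dimension $\le1$ in $\bC(R\Modl)$ (apply the exact functor $D_{n,n+1}^\bu(-)$, which sends projectives to projectives, to a projective resolution of $R[s^{-1}]$ of length~$\le1$), so Lemma~\ref{generated-by-projdim1-lemma} applies; in case~(b) use Lemma~\ref{hereditary-lemma}(a) instead, since $\sA_R=\bC_\ac^\bfl(R\Modl_\fl)$ is generating in $\bC(R\Modl)$ and closed under kernels of epimorphisms there (a submodule of a flat module with flat quotient is flat, and the cocycle sequences of a short exact sequence of acyclic complexes are exact), while $\sB_R$ is cogenerating (there are enough injective complexes, with injective hence cotorsion terms).

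The main obstacle is the null\+homotopy statement invoked above, i.e.\ the acyclicity of $\Hom_R^\bu(A^\bu,C^\bu)$ for an \emph{unbounded} acyclic complex $A^\bu$ with cocycles in $\sA^0$ and an arbitrary complex $C^\bu$ with terms in~$\sB^0$. For bounded complexes this is a routine stepwise construction of the homotopy using $\Ext^1_R(\text{cocycle},\sB^0)=0$ (heredity of the ground pair); the two\+sided case is precisely where the self\+dual constructions of~\cite[Chapter~4]{Pcosh} (specialized to the affine scheme $\Spec R$, and to the ground pairs above) are used, and I would invoke those, parallel to the proof of the Salce lemma (Lemma~\ref{salce-lemma}) for producing the complementary approximation sequence. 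A secondary point requiring care, but no difficulty, is to distinguish ``acyclic complex of (very) flat modules'' from ``acyclic complex of (very) flat modules with (very) flat modules of cocycles'': only the latter is the left class, and the cocycle condition is exactly what the disk\+complex generators and the $\Fil$\+argument are designed to capture. For part~(b) one additionally needs that the class $\bC_\ac^\bfl(R\Modl_\fl)$ of pure\+acyclic complexes of flat modules is deconstructible, so that Theorem~\ref{eklof-trlifaj-theorem} is available.
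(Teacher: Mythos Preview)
Your skeleton is right and close to the paper's: identify $\sB_R$ via disk complexes, prove $\Ext^1$-orthogonality, then invoke Eklof--Trlifaj. But the step you call the ``main obstacle'' is not merely an obstacle---as you have stated it, it is false in general. The claim that for \emph{every} hereditary cotorsion pair $(\sA^0,\sB^0)$ any chain map from an acyclic complex with cocycles in $\sA^0$ to a complex with terms in $\sB^0$ is null-homotopic already fails for $(\sA^0,\sB^0)=(R\Modl,\,R\Modl^\inj)$: over $R=k[x]/(x^2)$ the doubly infinite complex $\dotsb\xrightarrow{x}R\xrightarrow{x}R\xrightarrow{x}\dotsb$ is acyclic with cocycles $xR$ (certainly in $R\Modl$), is a complex of injectives ($R$ is self-injective), and is not contractible, so its identity endomorphism is the desired non-null-homotopic map. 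Your appeal to~\cite[Chapter~4]{Pcosh} is also misplaced: those are \v Cech-type constructions of approximation sequences from an affine covering and say nothing about this homotopy vanishing; nor is there any analogy with the Salce lemma here.

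The paper establishes the orthogonality using features specific to each ground pair. For~(a), the key is that very flat modules have projective dimension~$\le 1$: choose a contractible complex of projectives $P^\bu$ with a termwise surjection onto $F^\bu\in\sA_R$; the kernel $Q^\bu$ then has projective terms (by $\pd F^n\le 1$) and projective cocycles (the cocycle functor on short exact sequences of acyclic complexes is exact, so apply $\pd\le 1$ to the very flat cocycles of $F^\bu$), hence $Q^\bu$ is again contractible. The sequence $0\to\Hom_R(F^\bu,C^\bu)\to\Hom_R(P^\bu,C^\bu)\to\Hom_R(Q^\bu,C^\bu)\to 0$ is exact (as $\Ext^1_R(F^n,C^m)=0$) with acyclic outer terms, so $\Hom_R(F^\bu,C^\bu)$ is acyclic and Lemma~\ref{ext-homotopy-hom-lemma} finishes. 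For~(b) there is no dimension bound, and the paper cites~\cite[Theorem~5.3]{BCE}, which ultimately rests on cotorsion periodicity---a genuinely nontrivial input your sketch does not supply.

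A minor structural difference: the paper obtains completeness by citing deconstructibility of $\sA_R$ directly via~\cite[Proposition~4.4]{Sto} and applying Theorem~\ref{eklof-trlifaj-theorem} to $\sA_R$ itself, rather than to your explicit set $\mathcal S$ of disk complexes. Your route via $\Fil(\mathcal S)^\oplus=\sA_R$ would also work, but note that the inclusion $\sA_R\subset{}^{\perp_1}\sB_R=\Fil(\mathcal S)^\oplus$ presupposes the orthogonality just discussed, so everything still hinges on that step.
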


\begin{proof}
 Part~(a): the category of complexes $\bC(R\Modl)$ is a Grothendieck
abelian category with enough projective objects; in fact,
the projective objects of $\bC(R\Modl)$ are the contractible complexes
of projective modules~\cite[Theorem~IV.3.2]{EM},
\cite[Lemma~1.3.3]{Bec}, \cite[Lemma~5.2(b)]{PS4}.
 By~\cite[Proposition~4.4]{Sto}, the class $\sA_R=
\bC_\ac^\bvfl(R\Modl_\vfl)$ of acyclic complexes of very flat modules
with very flat modules of cocycles is deconstructible in $\bC(R\Modl)$.
 Furthermore, all the projective objects of $\bC(R\Modl_\vfl)$ belong
to~$\sA_R$.
 According to Theorem~\ref{eklof-trlifaj-theorem} (see the last
paragraph of Section~\ref{preliminaries-cotorsion-pairs-secn}), it
follows that $(\sA_R,\sA_R^{\perp_1})$ is a complete cotorsion pair
in $\bC(R\Modl)$. {\emergencystretch=1em\par}

 Let us show that $\sA_R^{\perp_1}=\bC(R\Modl^\cta)$.
 The inclusion $\sA_R^{\perp_1}\subset\bC(R\Modl^\cta)$ is implied
by Lemma~\ref{disk-complexes-lemma}.
 Conversely, let $F^\bu$ be an acyclic complex of very flat $R$\+modules
with very flat modules of cocycles, and $C^\bu$ be a complex of
contraadjusted $R$\+modules.
 Choose a contractible complex of projective $R$\+modules $P^\bu$
together with a termwise surjective morphism of complexes
$P^\bu\rarrow F^\bu$.
 Since very flat $R$\+modules have projective dimension~$\le1$
(by Lemma~\ref{generated-by-projdim1-lemma}), and since the functors
assigning to an acyclic complex its modules of cocycles are exact,
the kernel $Q^\bu$ of the latter morphism of complexes is also
a contractible complex of projective $R$\+modules.
  Now we have a short exact sequence of complexes
$0\rarrow\Hom_R(F^\bu,C^\bu)\rarrow\Hom_R(P^\bu,C^\bu)\rarrow
\Hom_R(Q^\bu,C^\bu)\rarrow0$ (since $F^\bu$ is a complex of very flat
$R$\+modules and $C^\bu$ is a complex of contraadjusted $R$\+modules).
 Since the complexes $\Hom_R(P^\bu,C^\bu)$ and $\Hom_R(Q^\bu,C^\bu)$
are acyclic, so is the complex $\Hom_R(F^\bu,C^\bu)$.
 It remains to apply Lemma~\ref{ext-homotopy-hom-lemma} in order to
conclude that $\Ext^1_{\bC(R\Modl)}(F^\bu,C^\bu)=0$.

 Finally, the class $\bC(R\Modl^\cta)$ is closed under the cokernels of
monomorphisms in $\bC(R\Modl)$, because the class $R\Modl^\cta$ is
closed under the cokernels of monomorphisms in $R\Modl$.
 Hence the cotorsion pair $(\sA_R,\sB_R)$ in $\bC(R\Modl)$
is hereditary.

 Part~(b) is~\cite[Corollary~4.10]{Gil0} together
with~\cite[Theorem~5.3]{BCE}.
 Essentially, the first paragraph of the proof of part~(a) above
applies in the context of part~(b) as well (using the fact that
the class of flat modules is deconstructible
by~\cite[Lemma~1 and Proposition~2]{BBE} or~\cite[Lemma~6.23]{GT});
and the inclusion $\sA_R^{\perp_1}\subset\bC(R\Modl^\cot)$
once again holds by Lemma~\ref{disk-complexes-lemma}.
 Then the result of~\cite[Theorem~5.3]{BCE} tells that any morphism
from an acyclic complex of flat modules with flat modules of cocycles
to a complex of cotorsion modules is homotopic to zero.
\end{proof}

\begin{ex} \label{termwise-contraadjusted-antilocal-example}
 Let $\sE_R=\sK_R=\bC(R\Modl)$ be the abelian category of complexes
of $R$\+modules.
 Consider the hereditary complete cotorsion pair $(\sA_R,\sB_R)$ from
Proposition~\ref{very-flat-tilde-termwise-cta-cot-cotorsion-pair}(a):
so, $\sA_R=\bC_\ac^\bvfl(R\Modl_\vfl)$ is the class of acyclic
complexes of very flat $R$\+modules with very flat modules of
cocycles, and $\sB_R=\bC(R\Modl^\cta)$ is the class of all complexes
of contraadjusted $R$\+modules.

 The class $\sA$ of acyclic complexes of very flat modules with
very flat modules of cocycles is very local.
 More generally, the complex $S\ot_R F^\bu$ belongs to
$\bC_\ac^\bvfl(S\Modl_\vfl)$ for any complex $F^\bu\in
\bC_\ac^\bvfl(R\Modl_\vfl)$ and any commutative ring homomorphism
$R\rarrow S$.
 The underlying complex of $R$\+modules of any complex from
$\bC_\ac^\bvfl(S\Modl_\vfl)$ belongs to $\bC_\ac^\bvfl(R\Modl_\vfl)$
for any commutative ring homomorphism $R\rarrow S$ such that
the $R$\+module $S[s^{-1}]$ is very flat for all $s\in S$
(see Example~\ref{very-flatness-local-example}).
 Hence the ascent and direct image conditions hold.
 The class $\sA_R$ is closed under kernels of epimorphisms in
$\bC(R\Modl)$, since the cotorsion pair $(\sA_R,\sB_R)$ is
hereditary in $\bC(R\Modl)$.
 So Lemma~\ref{ascent+direct-image-imply-descent} tells that
the class $\sA$ satisfies descent.

 Therefore, we can conclude that the class $\sB$ of all complexes of
contraadjusted modules is antilocal
by Theorem~\ref{locality-antilocality-theorem}(b).
 (See Example~\ref{termwise-cta-antilocal-via-colocality} below for
an alternative proof of this fact.)
\end{ex}

\begin{ex} \label{termwise-cotorsion-antilocal-example}
 Let $\sE_R=\sK_R=\bC(R\Modl)$ be the abelian category of complexes
of $R$\+modules.
 Consider the hereditary complete cotorsion pair $(\sA_R,\sB_R)$ from
Proposition~\ref{very-flat-tilde-termwise-cta-cot-cotorsion-pair}(b)
(for a commutative ring~$R$).
 So, $\sA_R=\bC_\ac^\bfl(R\Modl_\fl)$ is the class of acyclic complexes
of flat $R$\+modules with flat modules of cocycles, and
$\sB_R=\bC(R\Modl^\cot)$ is the class of all complexes
of cotorsion $R$\+modules.

 The class $\sA$ of acyclic complexes of flat modules with
flat modules of cocycles is very local.
 More generally, the complex $S\ot_R F^\bu$ belongs to
$\bC_\ac^\bfl(S\Modl_\fl)$ for any complex $F^\bu\in
\bC_\ac^\bfl(R\Modl_\fl)$ and any ring homomorphism $R\rarrow S$.
 The underlying complex of $R$\+modules of any complex from
$\bC_\ac^\bfl(S\Modl_\fl)$ belongs to $\bC_\ac^\bfl(R\Modl_\fl)$
for any commutative ring homomorphism $R\rarrow S$ making $S$
a flat $R$\+module.
 Hence the ascent and direct image conditions hold.
 The class $\sA_R$ is closed under kernels of epimorphisms in
$\bC(R\Modl)$, since the cotorsion pair $(\sA_R,\sB_R)$ is
hereditary in $\bC(R\Modl)$.
 So Lemma~\ref{ascent+direct-image-imply-descent} tells that
the class $\sA$ satisfies descent.

 Therefore, we can conclude that the class $\sB$ of all complexes of
cotorsion modules is antilocal
by Theorem~\ref{locality-antilocality-theorem}(b).
 (See Example~\ref{termwise-cotorsion-antilocal-via-colocality}
below for an alternative proof of this fact.)
\end{ex}

\begin{ex} \label{termwise-flat-contraadjusted-antilocal-example}
 Let $\sK_R=\bC(R\Modl)$ be the abelian category of complexes of
$R$\+modules and $\sE_R\subset\sK_R$ be the full subcategory of
complexes of flat $R$\+modules, $\sE_R=\bC(R\Modl_\fl)$.
 Then the cotorsion pair ($\bC_\ac^\bvfl(R\Modl_\vfl)$,
$\bC(R\Modl^\cta)$) in $\bC(R\Modl)$ (see
Proposition~\ref{very-flat-tilde-termwise-cta-cot-cotorsion-pair}(a))
restricts to the full subcategory $\sE_R$, that is, the pair of
classes of acyclic complexes of very flat $R$\+modules with very flat
modules of cocycles $\sA_R=\bC_\ac^\bvfl(R\Modl_\vfl)$ and
all complexes of flat contraadjusted $R$\+modules
$\sB_R=\bC(R\Modl_\fl)\cap\bC(R\Modl^\cta)=\bC(R\Modl_\fl^\cta)$
is a hereditary complete cotorsion pair $(\sA_R,\sB_R)$ in~$\sE_R$.
 This holds by Lemmas~\ref{cotorsion-pair-restricts}(a)
and~\ref{restricted-cotorsion-hereditary}, because the class $\sE_R$
is closed under extensions and the kernels of epimorphisms in~$\sK_R$.

 Now the class $\sE$ is very local essentially by
Example~\ref{flatness-local-example} (cf.\
Remark~\ref{diagrams-termwise-in-class-remark}), and
the class $\sA$ is very local by
Example~\ref{termwise-contraadjusted-antilocal-example}.
 Furthermore, the class $\sE_R$ is closed under extensions and direct
summands in~$\sK_R$.
 Thus the class $\sB$ of all complexes of flat contraadjusted modules
is antilocal by Theorem~\ref{locality-antilocality-theorem}(b).
 (This is also provable by a method sketched below at the end of
Example~\ref{termwise-very-flat-cta-antilocal-via-colocality}.)
\end{ex}

\begin{ex} \label{termwise-flat-cotorsion-antilocal-example}
 Let $\sK_R=\bC(R\Modl)$ be the abelian category of complexes of
$R$\+modules and $\sE_R\subset\sK_R$ be the full subcategory of
complexes of flat $R$\+modules, $\sE_R=\bC(R\Modl_\fl)$.
 Then the cotorsion pair ($\bC_\ac^\bfl(R\Modl_\fl)$,
$\bC(R\Modl^\cot)$) in $\bC(R\Modl)$ (see
Proposition~\ref{very-flat-tilde-termwise-cta-cot-cotorsion-pair}(b))
restricts to the full subcategory $\sE_R$, that is, the pair of
classes of acyclic complexes of flat $R$\+modules with flat modules of
cocycles $\sA_R=\bC_\ac^\bfl(R\Modl_\fl)$ and all complexes of flat
cotorsion $R$\+modules
$\sB_R=\bC(R\Modl_\fl)\cap\bC(R\Modl^\cot)=\bC(R\Modl_\fl^\cot)$
is a hereditary complete cotorsion pair $(\sA_R,\sB_R)$ in~$\sE_R$.
 This holds by Lemmas~\ref{cotorsion-pair-restricts}(a)
and~\ref{restricted-cotorsion-hereditary}.

 Now the class $\sE$ is very local essentially by
Example~\ref{flatness-local-example}, and the class $\sA$ is very local
by Example~\ref{termwise-cotorsion-antilocal-example}.
 Furthermore, class $\sE_R$ is closed under extensions and direct
summands in~$\sK_R$.
 Thus the class $\sB$ of all complexes of flat cotorsion modules
is antilocal by Theorem~\ref{locality-antilocality-theorem}(b).
 (This is also provable by a method sketched below at the end of
Example~\ref{termwise-very-flat-cta-antilocal-via-colocality}.)
\end{ex}

\begin{ex} \label{termwise-very-flat-contraadjusted-antilocal-example}
 Let $\sK_R=\bC(R\Modl)$ be the abelian category of complexes of
$R$\+modules and $\sE_R\subset\sK_R$ be the full subcategory of
complexes of very flat $R$\+modules, $\sE_R=\bC(R\Modl_\vfl)$.
 Then the cotorsion pair ($\bC_\ac^\bvfl(R\Modl_\vfl)$,
$\bC(R\Modl^\cta)$) in $\bC(R\Modl)$ (see
Proposition~\ref{very-flat-tilde-termwise-cta-cot-cotorsion-pair}(a))
restricts to the full subcategory $\sE_R$, that is, the pair of
classes of acyclic complexes of very flat $R$\+modules with very flat
modules of cocycles $\sA_R=\bC_\ac^\bvfl(R\Modl_\vfl)$ and
all complexes of very flat contraadjusted $R$\+modules
$\sB_R=\bC(R\Modl_\vfl)\cap\bC(R\Modl^\cta)=\bC(R\Modl_\vfl^\cta)$
is a hereditary complete cotorsion pair $(\sA_R,\sB_R)$ in~$\sE_R$.
 This holds by Lemmas~\ref{cotorsion-pair-restricts}(a)
and~\ref{restricted-cotorsion-hereditary}, because the class $\sE_R$
is closed under extensions and the kernels of epimorphisms in~$\sK_R$.

 Now the class $\sE$ is very local essentially by
Example~\ref{very-flatness-local-example} (cf.\
Remark~\ref{diagrams-termwise-in-class-remark}), and
the class $\sA$ is very local by
Example~\ref{termwise-contraadjusted-antilocal-example}.
 Furthermore, the class $\sE_R$ is closed under extensions and direct
summands in~$\sK_R$.
 Thus the class $\sB$ of all complexes of very flat contraadjusted
modules is antilocal by Theorem~\ref{locality-antilocality-theorem}(b).
 (See Example~\ref{termwise-very-flat-cta-antilocal-via-colocality}
below for an alternative proof of this fact.)
\end{ex}

 A complex of $R$\+modules $Y^\bu$ is said to be \emph{contraacyclic
in the sense of Becker} (``Becker-contraacyclic'' for brevity)
\cite[Proposition~1.3.6(1)]{Bec}, \cite[Section~7]{PS4} if, for any
complex of projective $R$\+modules $P^\bu$, any morphism of complexes
of $R$\+modules $P^\bu\rarrow Y^\bu$ is homotopic to zero.
 We denote the class of contraacyclic complexes of $R$\+modules
by $\bC^\ctrac(R\Modl)\subset\bC(R\Modl)$.

\begin{prop} \label{very-flat-contraacyclic-cotorsion-pair}
\textup{(a)} For any commutative ring $R$, consider the following pair
of classes of complexes of $R$\+modules.
 The left class\/ $\sA_R=\bC(R\Modl_\vfl)$ is the class of arbitrary
complexes of very flat $R$\+modules.
 The right class\/ $\sB_R=\bC^\ctrac(R\Modl^\cta)=\bC^\ctrac(R\Modl)
\cap\bC(R\Modl^\cta)$ is the class of contraacyclic complexes of
contraadjusted $R$\+modules, i.~e., the intersection of the classes of
contraacyclic complexes of $R$\+modules and arbitrary complexes of
contraadjusted $R$\+modules.
 Then the pair of classes $(\sA_R,\sB_R)$ is a hereditary complete
cotorsion pair in the abelian category of complexes\/ $\bC(R\Modl)$.
\par
\textup{(b)} For any associative ring $R$, consider the following pair
of classes of complexes of left $R$\+modules.
 The left class\/ $\sA_R=\bC(R\Modl_\fl)$ is the class of arbitrary
complexes of flat $R$\+modules.
 The right class\/ $\sB_R=\bC^\ctrac(R\Modl^\cot)=\bC^\ctrac(R\Modl)
\cap\bC(R\Modl^\cot)$ is the class of contraacyclic complexes of
cotorsion $R$\+modules, i.~e., the intersection of the classes of
contraacyclic complexes of $R$\+modules and arbitrary complexes of
cotorsion $R$\+modules.
 Then the pair of classes $(\sA_R,\sB_R)$ is a hereditary complete
cotorsion pair in the abelian category of complexes\/ $\bC(R\Modl)$.
\end{prop}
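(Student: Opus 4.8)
The plan is to establish both parts by the deconstructibility method, in parallel with the proof of Proposition~\ref{very-flat-tilde-termwise-cta-cot-cotorsion-pair}; I write $\sA_R=\bC(R\Modl_\vfl)$, $\sB_R=\bC^\ctrac(R\Modl^\cta)$ in case~(a) and $\sA_R=\bC(R\Modl_\fl)$, $\sB_R=\bC^\ctrac(R\Modl^\cot)$ in case~(b). A preliminary observation I would use throughout is that $\bC^\ctrac(R\Modl)=\bC(R\Modl_\proj)^{\perp_1}$ in $\bC(R\Modl)$: for a complex of projective modules $P^\bu$ and any complex $Y^\bu$, Lemma~\ref{ext-homotopy-hom-lemma} gives an isomorphism $\Ext^1_{\bC(R\Modl)}(P^\bu,Y^\bu)\simeq\Hom_{\bH(R\Modl)}(P^\bu,Y^\bu[1])$, so membership of $Y^\bu$ in $\bC(R\Modl_\proj)^{\perp_1}$ is the vanishing of the latter for all such $P^\bu$, which — since the shifts of a complex of projectives are again complexes of projectives — is exactly Becker-contraacyclicity of $Y^\bu$.

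For completeness, the class of very flat (resp.\ flat) modules is deconstructible in $R\Modl$, and hence the class $\sA_R$ of all complexes with terms in that class is deconstructible in $\bC(R\Modl)$, exactly as in the first paragraph of the proof of Proposition~\ref{very-flat-tilde-termwise-cta-cot-cotorsion-pair}. Since $\sA_R$ contains all the contractible complexes of projective modules — hence a projective generator of $\bC(R\Modl)$ — and is closed under direct summands, Theorem~\ref{eklof-trlifaj-theorem} together with the remarks following it shows that the cotorsion pair $(\sA_R,\sA_R^{\perp_1})$ in $\bC(R\Modl)$ is complete.

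It then remains to identify $\sA_R^{\perp_1}$ with $\sB_R$. The inclusion $\sA_R^{\perp_1}\subseteq\sB_R$ is the easy direction: the disk complexes $D^\bu_{n,n+1}(R[s^{-1}])$, $s\in R$ (resp.\ $D^\bu_{n,n+1}(F)$ for $F$ a flat module) lie in $\sA_R$, so Lemma~\ref{disk-complexes-lemma} forces every $Y^\bu\in\sA_R^{\perp_1}$ to be a complex of contraadjusted (resp.\ cotorsion) modules, while $\bC(R\Modl_\proj)\subseteq\sA_R$ and the preliminary observation give $\sA_R^{\perp_1}\subseteq\bC^\ctrac(R\Modl)$. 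For the reverse inclusion $\sB_R\subseteq\sA_R^{\perp_1}$ in case~(a): given $F^\bu\in\bC(R\Modl_\vfl)$, pick a termwise surjection $P^\bu\rarrow F^\bu$ from a contractible complex of projectives; as very flat modules have projective dimension $\le1$ (Lemma~\ref{generated-by-projdim1-lemma}), the kernel $K^\bu$ is again a complex of projectives, so $0\rarrow K^\bu\rarrow P^\bu\rarrow F^\bu\rarrow0$ is a short exact sequence of complexes of projectives. For $Y^\bu\in\sB_R$ one has $\Ext^1_R(F^n,Y^m)=0$ (very flat versus contraadjusted), so applying $\Hom_R({-},Y^\bu)$ yields a short exact sequence of complexes whose outer two terms $\Hom_R(P^\bu,Y^\bu)$ and $\Hom_R(K^\bu,Y^\bu)$ are acyclic by Becker-contraacyclicity of $Y^\bu$; hence $\Hom_R(F^\bu,Y^\bu)$ is acyclic, and $\Ext^1_{\bC(R\Modl)}(F^\bu,Y^\bu)\simeq\Hom_{\bH(R\Modl)}(F^\bu,Y^\bu[1])=0$ by Lemma~\ref{ext-homotopy-hom-lemma}. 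This establishes $\sA_R^{\perp_1}=\sB_R$ in case~(a). Finally, $\sA_R$ is closed under kernels of epimorphisms in $\bC(R\Modl)$ — because the class of very flat (resp.\ flat) modules is, by Examples~\ref{very-flatness-local-example} and~\ref{flatness-local-example} — so the complete cotorsion pair $(\sA_R,\sB_R)$ is hereditary by the equivalence of conditions~(i)--(iv) in Section~\ref{preliminaries-cotorsion-pairs-secn}.

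The main obstacle is case~(b): flat modules may have infinite projective dimension, so a complex of flat modules admits no finite-length resolution by complexes of projectives, and the acyclicity of $\Hom_R(F^\bu,Y^\bu)$ for $Y^\bu\in\sB_R$ cannot be obtained by the short argument above. To complete case~(b) I would instead invoke from the literature — parallel to the references used for Proposition~\ref{very-flat-tilde-termwise-cta-cot-cotorsion-pair}(b) — the fact that every morphism from a complex of flat modules to a Becker-contraacyclic complex of cotorsion modules is homotopic to zero (equivalently, $\bC(R\Modl_\fl)^{\perp_1}=\bC^\ctrac(R\Modl^\cot)$), available from \cite{BCE} together with \cite{Bec}; all the remaining steps are then identical to case~(a).
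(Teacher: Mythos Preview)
Your proof of part~(a) is essentially identical to the paper's: both use deconstructibility of $\bC(R\Modl_\vfl)$ via~\cite{Sto}, complete the cotorsion pair by Theorem~\ref{eklof-trlifaj-theorem}, identify $\sA_R^{\perp_1}$ via the disk complexes and the inclusion $\bC(R\Modl_\proj)\subset\sA_R$, and for the reverse inclusion exploit $\pd\le1$ of very flat modules to get a two-step resolution by complexes of projectives; the hereditariness argument is also the same.

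For part~(b) you correctly identify the obstacle and the fix, but your citation is not quite on target. The paper does not black-box the statement that $\bC(R\Modl_\fl)^{\perp_1}=\bC^\ctrac(R\Modl^\cot)$ from~\cite{BCE} and~\cite{Bec} alone; rather, it uses Neeman's approximation~\cite[Remark~2.11 and Lemma~8.5]{Neem-fl} to produce, for any complex of flat modules $F^\bu$, a morphism $P^\bu\to F^\bu$ from a complex of projectives whose cone $G^\bu$ is a pure-acyclic complex of flats (acyclic with flat cocycles). One then shows $\Hom_R(P^\bu,C^\bu)$ is acyclic by contraacyclicity of $C^\bu$, and $\Hom_R(G^\bu,C^\bu)$ is acyclic by combining~\cite[Theorem~5.1(2)]{BCE} (acyclic complexes of cotorsion modules have cotorsion cocycles) with~\cite[Lemma~3.9]{Gil0}. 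So the missing ingredient in your sketch is the Neeman approximation; \cite{BCE} and~\cite{Bec} by themselves do not bridge the gap between complexes of projectives and arbitrary complexes of flats.
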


\begin{proof}
 Part~(a): as mentioned in the proof of
Proposition~\ref{very-flat-tilde-termwise-cta-cot-cotorsion-pair}(a),
the category of complexes $\bC(R\Modl)$ is a Grothendieck abelian
category with enough projective objects.
 By~\cite[Proposition~4.3]{Sto}, the class $\sA_R=\bC(R\Modl_\vfl)$ of
all complexes of very flat modules is deconstructible in
$\bC(R\Modl)$.
 Furthermore, all the projective objects of $\bC(R\Modl_\vfl)$ belong
to~$\sA_R$.
 According to Theorem~\ref{eklof-trlifaj-theorem} (see the last
paragraph of Section~\ref{preliminaries-cotorsion-pairs-secn}), it
follows that $(\sA_R,\sA_R^{\perp_1})$ is a complete cotorsion pair
in $\bC(R\Modl)$. {\emergencystretch=1em\par}

 Let us show that $\sA_R^{\perp_1}=\bC^\ctrac(R\Modl^\cta)$.
 The inclusion $\sA_R^{\perp_1}\subset\bC^\ctrac(R\Modl)$ holds,
because all the complexes of projective $R$\+modules belong to~$\sA_R$.
 The inclusion $\sA_R^{\perp_1}\subset\bC(R\Modl^\cta)$ is implied
by Lemma~\ref{disk-complexes-lemma}.
 Conversely, let $F^\bu$ be a complex of very flat $R$\+modules and
$C^\bu$ be a Becker-contraacyclic complex of contraadjusted
$R$\+modules.
 Choose a complex of projective $R$\+modules $P^\bu$ together with
a termwise surjective morphism of complexes $P^\bu\rarrow F^\bu$.
 Since very flat $R$\+modules have projective dimension~$\le1$
(by Lemma~\ref{generated-by-projdim1-lemma}), the kernel $Q^\bu$
of the latter morphism of complexes is also a complex of projective
$R$\+modules.
 The argument finishes similarly to the second paragraph of
the proof of
Proposition~\ref{very-flat-tilde-termwise-cta-cot-cotorsion-pair}(a).

 Finally, the class $\bC(R\Modl_\vfl)$ is closed under the kernels of
epimorphisms in $\bC(R\Modl)$, because the class $R\Modl_\vfl$ is
closed under the kernels of epimorphisms in $R\Modl$.
 Hence the cotorsion pair $(\sA_R,\sB_R)$ in $\bC(R\Modl)$
is hereditary.

 Part~(b): by~\cite[Lemma~1 and Proposition~2]{BBE}
or~\cite[Lemma~6.23]{GT}, the class of flat modules is
deconstructible in $R\Modl$.
 By~\cite[Proposition~4.3]{Sto}, it follows that the class $\sA_R=
\bC(R\Modl_\fl)$ of all complexes of flat modules is deconstructible
in $\bC(R\Modl)$.
 Similarly to the proof of part~(a), it remains to show that
$\sA_R^{\perp_1}=\sB_R$, and the only nontrivial aspect of it is
to prove that the complex of abelian groups $\Hom_R(F^\bu,C^\bu)$ is
acyclic for any complexes of $R$\+modules $F^\bu\in\sA_R$ and
$C^\bu\in\sB_R$.
 By~\cite[Remark~2.11 and Lemma~8.5]{Neem-fl}, there exists a complex
of projective $R$\+modules $P^\bu$ together with a morphism of
complexes $P^\bu\rarrow F^\bu$ whose cone $G^\bu$ is an acyclic complex
of flat $R$\+modules with flat modules of cocycles.

 Now the complex $\Hom_R(P^\bu,C^\bu)$ is acyclic, since the complex
$C^\bu$ is contraacyclic.
 By~\cite[Theorem~5.1(2)]{BCE}, any acyclic complex of cotorsion
modules has cotorsion modules of cocycles; in particular, $C^\bu$ is
an acyclic complex of cotorsion modules with cotorsion modules of
cocycles.
 Following~\cite[Definition~3.3 and Lemma~3.9]{Gil0}, the complex
$\Hom_R(G^\bu,C^\bu)$ is acyclic as well.
\end{proof}

\begin{ex} \label{contraadjusted-contraacyclicity-antilocal-example}
 Let $\sE_R=\sK_R=\bC(R\Modl)$ be the abelian category of complexes of
$R$\+modules.
 Consider the hereditary complete cotorsion pair $(\sA_R,\sB_R)$ from
Proposition~\ref{very-flat-contraacyclic-cotorsion-pair}(a): so,
$\sA_R=\bC(R\Modl_\vfl)$ is the class of all complexes of very flat
$R$\+modules, and $\sB_R=\bC^\ctrac(R\Modl^\cta)$ is the class of
Becker-contraacyclic complexes of contraadjusted $R$\+modules.

 Then the class $\sA$ is very local essentially by
Example~\ref{very-flatness-local-example} (cf.\
Remark~\ref{diagrams-termwise-in-class-remark}).
 Thus the class $\sB$ of Becker-contraacyclic complexes of
contraadjusted modules is antilocal
by Theorem~\ref{locality-antilocality-theorem}(b).
\end{ex}

\begin{ex}
 Let $\sE_R=\sK_R=\bC(R\Modl)$ be the abelian category of complexes of
$R$\+modules.
 Consider the hereditary complete cotorsion pair $(\sA_R,\sB_R)$ from
Proposition~\ref{very-flat-contraacyclic-cotorsion-pair}(b)
(for a commutative ring~$R$).
 So, $\sA_R=\bC(R\Modl_\fl)$ is the class of all complexes of flat
$R$\+modules, and $\sB=\bC^\ctrac(R\Modl^\cot)$ is the class of
Becker-contraacyclic complexes of cotorsion $R$\+modules.

 Then the class $\sA$ is very local essentially by
Example~\ref{flatness-local-example} (cf.\
Remark~\ref{diagrams-termwise-in-class-remark}).
 Thus the class $\sB$ of Becker-contraacyclic complexes of cotorsion
modules is antilocal by Theorem~\ref{locality-antilocality-theorem}(b).
\end{ex}

\Section{Examples of Colocal and Antilocal Classes}
\label{antilocal-colocal-classes-examples-secn}

 In this section we demonstrate examples of cotorsion pairs formed by
antilocal classes $\sA$ and colocal classes $\sB$, as described by
Theorem~\ref{antilocality-colocality-theorem}.
 In all the examples in this section, $\R$ is the class of all
commutative rings.

 We start with suggesting alternative approaches to
Examples~\ref{cotorsion-antilocal-example},
\ref{contraadjustedness-antilocal-example},
\ref{contraadjusted-flatness-antilocal-example},
\ref{termwise-contraadjusted-antilocal-example},
and~\ref{termwise-cotorsion-antilocal-example}
from the previous section.

\begin{ex} \label{contraadjustedness-antilocal-via-colocality}
 Let $\sE_R=\sK_R^\cta=R\Modl^\cta$ be the exact category of
contraadjusted $R$\+modules.
 Consider the following hereditary complete cotorsion pair in
$R\Modl^\cta$: the left class $\sA_R$ is the class of all contraadjusted
$R$\+modules, $\sA_R=R\Modl^\cta$, and the right class $\sB_R$ is
the class of injective $R$\+modules, $\sB_R=R\Modl^\inj$.

 To say that this is a hereditary complete cotorsion pair, means,
basically, that the cotorsion pair (all modules, injective modules)
from Example~\ref{injectivity-strongly-antilocal-example} restricts to
the exact subcategory of contraadjusted modules in $R\Modl$.
 This holds because the class $R\Modl^\cta$ is closed under extensions
and cokernels of monomorphisms in $R\Modl$, and all injective
modules are contraadjusted (see Lemmas~\ref{cotorsion-pair-restricts}(b)
and~\ref{restricted-cotorsion-hereditary}).

 Now the class $\sE$ is very colocal by
Example~\ref{contraadjustedness-colocal-example}, and the class $\sB$
is very colocal by Example~\ref{injectivity-colocal-example}.
 Thus the class $\sA$ of contraadjusted modules is antilocal
by Theorem~\ref{antilocality-colocality-theorem}(b).
 We have obtained an alternative proof of the antilocality of
contraadjustedness, based on
Theorem~\ref{antilocality-colocality-theorem}(b) instead of
Theorem~\ref{locality-antilocality-theorem}(b)
(cf.\ Example~\ref{contraadjustedness-antilocal-example}).
\end{ex}

\begin{ex} \label{cotorsion-antilocal-via-colocality}
 Let $\sK_R^\cta=R\Modl^\cta$ be the exact category of
contraadjusted $R$\+mod\-ules and $\sE_R\subset\sK_R^\cta$ be the full
subcategory of cotorsion $R$\+modules, $\sE_R=R\Modl^\cot$.
 Then the injective cotorsion pair ($R\Modl$, $R\Modl^\inj$) in
$R\Modl$ restricts  to the full subcategory $\sE_R$, i.~e.,
the pair of classes of cotorsion $R$\+modules $\sA_R=R\Modl^\cot$ and
injective $R$\+modules $\sB_R=R\Modl^\inj$ is a hereditary complete
cotorsion pair $(\sA_R,\sB_R)$ in $\sE_R=R\Modl^\cot$.
 This holds because the class $\sE_R$ is closed under extensions
and cokernels of monomorphisms in $R\Modl$, and all injective
$R$\+modules belong to~$\sE_R$
(see Lemmas~\ref{cotorsion-pair-restricts}(b)
and~\ref{restricted-cotorsion-hereditary}).

 Now the class $\sE$ is very colocal by
Example~\ref{cotorsion-colocal-example}, and the class $\sB$
is very colocal by Example~\ref{injectivity-colocal-example}.
 The class $\sE_R$ is also (obviously) closed under extensions and
direct summands in $R\Modl^\cta$.
 Thus the class $\sA$ of cotorsion modules is antilocal
by Theorem~\ref{antilocality-colocality-theorem}(b).
 We have obtained an alternative proof of the antilocality
of cotorsion, based on Theorem~\ref{antilocality-colocality-theorem}(b)
instead of Theorem~\ref{locality-antilocality-theorem}(b)
(cf.\ Example~\ref{cotorsion-antilocal-example}).
\end{ex}

\begin{ex} \label{contraadjusted-flatness-antilocal-via-colocality}
 Let $\sE_R=\sK_R^\cta=R\Modl^\cta$ be the exact category of
contraadjusted $R$\+modules.
  Then the flat cotorsion pair ($R\Modl_\fl$, $R\Modl^\cot$) in
$R\Modl$ restricts  to the full subcategory $\sE_R$, that is,
the pair of classes of flat contraadjusted $R$\+modules
$\sA_R=R\Modl_\fl^\cta$ and cotorsion $R$\+modules $\sB_R=R\Modl^\cot$
is a hereditary complete cotorsion pair $(\sA_R,\sB_R)$ in
$\sE_R=R\Modl^\cta$.
 This holds because the class $\sE_R$ is closed under extensions
and cokernels of monomorphisms in $R\Modl$, and all cotorsion
$R$\+modules belong to~$\sE_R$
(see Lemmas~\ref{cotorsion-pair-restricts}(b)
and~\ref{restricted-cotorsion-hereditary}).

 Now the class $\sE$ is very colocal by
Example~\ref{contraadjustedness-colocal-example}, and the class $\sB$
is very colocal by Example~\ref{cotorsion-colocal-example}.
 Thus the class $\sA$ of flat contraadjusted modules is antilocal
by Theorem~\ref{antilocality-colocality-theorem}(b).
 We have obtained an alternative proof of the antilocality of
contraadjusted flatness, based on
Theorem~\ref{antilocality-colocality-theorem}(b)
instead of Theorem~\ref{locality-antilocality-theorem}(b)
(cf.\ Example~\ref{contraadjusted-flatness-antilocal-example}).
\end{ex}

 Examples~\ref{contraadjustedness-antilocal-via-colocality}
and~\ref{cotorsion-antilocal-via-colocality} admit a common
generalization, which is also applicable to complexes of modules.
 Recall that the category of complexes $\bC(R\Modl)$ has enough
injective objects; the injective objects of $\bC(R\Modl)$ are
the contractible complexes of injective
modules~\cite[Theorem~IV.3.2]{EM}, \cite[Lemma~1.3.3]{Bec},
\cite[Lemma~5.2(a)]{PS4}.
 Notice that, both for $\sK_R=R\Modl$ and for $\sK_R=\bC(R\Modl)$,
all the injective objects of $\sK_R$ belong to $\sK_R^\cta$, and
there are enough of them in the exact category $\sK_R^\cta$; so
the classes of injective objects in $\sK_R$ and $\sK_R^\cta$ coincide.
 Denote this class of injective modules or complexes by
$\sK_R^\inj\subset\sK_R^\cta\subset\sK_R$.

\begin{cor} \label{many-colocal-are-antilocal}
 Let\/ $(\sE_R\subset\sK_R^\cta)_{R\in\R}$ be a system of classes of
contraadjusted modules or complexes of contraadjusted modules such
that, for every $R\in\R$, the class\/ $\sE_R$ is closed under
extensions, direct summands, and cokernels of admissible monomorphisms
in\/~$\sK_R^\cta$.
 Assume further that all the injective objects of\/ $\sK_R$ belong
to\/~$\sE_R$, and that the class\/ $\sE$ is very colocal.
 Then the class\/ $\sE$ is also antilocal.
\end{cor}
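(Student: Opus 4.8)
The plan is to deduce this from Theorem~\ref{antilocality-colocality-theorem}(b), applied to the system of ``trivial'' hereditary complete cotorsion pairs $(\sA_R,\sB_R)=(\sE_R,\sK_R^\inj)$ in the exact categories~$\sE_R$, where $\sK_R^\inj\subset\sK_R^\cta$ is the class of injective objects of~$\sK_R$. All the hypotheses imposed on the system $\sE$ in the Corollary include those required of the system $\sE$ in the Theorem; and the Theorem's conclusion~(b), once the right class $\sB=\sK^\inj$ is known to be very colocal, says exactly that $\sA=\sE$ is antilocal. So there are only two things to verify: that $(\sE_R,\sK_R^\inj)$ really is a hereditary complete cotorsion pair in $\sE_R$ for every $R\in\R$, and that the system $\sK^\inj=(\sK_R^\inj)_{R\in\R}$ is very colocal.

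For the cotorsion pair, I would start from the pair $(\sK_R,\sK_R^\inj)$ in the abelian category~$\sK_R$: it is a complete cotorsion pair because $\sK_R$ has enough injective objects, and it is hereditary because the cokernel of an admissible monomorphism between injective objects splits off and is again injective. Since the full subcategory $\sK_R^\cta$ is closed under quotients in $\sK_R$ and contains all injective objects (as recalled in Section~\ref{antilocal-colocal-secn}), Lemma~\ref{cotorsion-pair-restricts}(b) shows that this cotorsion pair restricts to the complete cotorsion pair $(\sK_R^\cta,\sK_R^\inj)$ in~$\sK_R^\cta$, which is still hereditary by Lemma~\ref{restricted-cotorsion-hereditary}. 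Now the Corollary assumes that $\sE_R\subset\sK_R^\cta$ is closed under extensions and under cokernels of admissible monomorphisms and contains $\sK_R^\inj$, so a second application of the same two lemmas shows that $(\sK_R^\cta,\sK_R^\inj)$ restricts to a hereditary complete cotorsion pair $(\sE_R,\sK_R^\inj)$ in~$\sE_R$. Concretely, this amounts to the observation that $\sE_R$ inherits ``enough injective objects'' from $\sK_R$, namely~$\sK_R^\inj$.

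For the very colocality of $\sK^\inj$, I would verify coascent and the direct image condition directly, and then obtain codescent from Lemma~\ref{coascent+direct-image-imply-codescent}. The colocalization functor $\Hom_R(R[s^{-1}],{-})$, applied termwise, is right adjoint to the exact functor of restriction of scalars along $R\rarrow R[s^{-1}]$, so it takes injective objects of $\sK_R$ to injective objects of $\sK_{R[s^{-1}]}$; this is coascent. In the other direction, restriction of scalars along $R\rarrow R[s^{-1}]$ is right adjoint to the exact functor $R[s^{-1}]\ot_R{-}$, because $R[s^{-1}]$ is a flat $R$\+module, so it preserves injective objects; this is the direct image condition. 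Finally $\sK_R^\inj$ is closed under finite direct sums, and it is closed under cokernels of admissible monomorphisms in $\sK_R^\cta$: for modules this is immediate, since such a cokernel is again injective, and for complexes a termwise split short exact sequence whose first two terms are contractible complexes of injective modules produces a distinguished triangle in $\bH(R\Modl)$ witnessing that the third term is again a contractible complex of injective modules. Hence Lemma~\ref{coascent+direct-image-imply-codescent} gives codescent, and $\sK^\inj$ is very colocal.

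With both points established, Theorem~\ref{antilocality-colocality-theorem}(b) applies to the cotorsion pairs $(\sE_R,\sK_R^\inj)$ and shows that $\sE$ is antilocal. I expect the only step requiring any care — everything else being formal adjunction bookkeeping and appeals to the machinery already developed — to be the verification that the trivial cotorsion pair $(\sK_R,\sK_R^\inj)$ descends all the way down to a hereditary complete cotorsion pair in $\sE_R$; this is exactly where the hypothesis that $\sE_R$ be closed under cokernels of admissible monomorphisms (which goes beyond what Theorem~\ref{antilocality-colocality-theorem} itself asks of $\sE$) gets used.
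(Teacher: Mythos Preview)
Your proposal is correct and follows essentially the same approach as the paper: apply Theorem~\ref{antilocality-colocality-theorem}(b) to the trivial cotorsion pair $(\sE_R,\sK_R^\inj)$, after checking that this really is a hereditary complete cotorsion pair in~$\sE_R$ and that $\sK^\inj$ is very colocal. The paper argues the first point a bit more directly (observing that $\sE_R$ has enough injectives, namely~$\sK_R^\inj$, since it is closed under cokernels of admissible monomorphisms), whereas you obtain the same conclusion by two successive applications of Lemmas~\ref{cotorsion-pair-restricts}(b) and~\ref{restricted-cotorsion-hereditary}; these are equivalent formalizations of the same observation.
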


\begin{proof}
 The pair of classes $\sA_R=\sE_R$ and $\sB_R=\sK_R^\inj$ is
a hereditary complete cotorsion pair in~$\sE_R$.
 Indeed, the pair of classes (all objects, injective objects) is
a hereditary complete cotorsion pair in any exact category with
enough injective objects.
 The exact category $\sE_R$ has enough injective objects in our
assumptions, and the class of injective objects in $\sE_R$ coincides
with $\sK_R^\inj$, since $\sE_R$ a full subcategory closed under
cokernels of admissible monomorphisms in~$\sK_R^\cta$ (or which is
the same, closed under cokernels of monomorphisms in~$\sK_R$).

 Furthermore, the system of classes of injective modules or
complexes $(\sK_R^\inj)_{R\in\R}$ is very colocal.
 For modules, this holds by Example~\ref{injectivity-colocal-example}.
 For complexes of modules, the similar arguments are applicable.
 Thus we can conclude that the class $\sA=\sE$ is antilocal
by Theorem~\ref{antilocality-colocality-theorem}(b).
\end{proof}

\begin{ex} \label{termwise-cta-antilocal-via-colocality}
 Let $\sE_R=\sK_R=\bC(R\Modl^\cta)$ be the exact category of complexes
of contraadjusted $R$\+modules.
 Then all the assumptions or Corollary~\ref{many-colocal-are-antilocal}
are satisfied; in particular, the class $\sE$ is very colocal
essentially by Example~\ref{contraadjustedness-colocal-example}
(cf.\ Remark~\ref{diagrams-termwise-in-class-remark}).
 Thus the class $\sE$ of all complexes of contraadjusted modules is
antilocal.
 We have obtained an alternative proof of the antilocality of
termwise contraadjustedness of complexes, based on
Theorem~\ref{antilocality-colocality-theorem}(b) instead of
Theorem~\ref{locality-antilocality-theorem}(b)
(cf.\ Example~\ref{termwise-contraadjusted-antilocal-example}).
\end{ex}

\begin{ex} \label{termwise-cotorsion-antilocal-via-colocality}
 Let $\sK_R=\bC(R\Modl^\cta)$ be the exact category of complexes of
contraadjusted $R$\+modules and $\sE_R\subset\sK_R^\cta$ be the full
subcategory of complexes of cotorsion $R$\+modules, $\sE_R=
\bC(R\Modl^\cot)$.
 Then all the assumptions or Corollary~\ref{many-colocal-are-antilocal}
are satisfied; in particular, the class $\sE$ is very colocal
essentially by Example~\ref{cotorsion-colocal-example}
(cf.\ Remark~\ref{diagrams-termwise-in-class-remark}).
 Thus the class $\sE$ of all complexes of cotorsion modules is
antilocal.
 We have obtained an alternative proof of the antilocality of termwise
cotorsion, based on Theorem~\ref{antilocality-colocality-theorem}(b)
instead of Theorem~\ref{locality-antilocality-theorem}(b)
(cf.\ Example~\ref{termwise-cotorsion-antilocal-example}).
\end{ex}

\begin{rem}
 We have seen quite a few examples of classes of modules or complexes
that are \emph{both colocal and antilocal}, including the classes of
injective modules, cotorsion modules, contraadjusted modules, and
complexes of these.
 The class of homotopy injective complexes of injective modules is
also both colocal and antilocal (see
Examples~\ref{homotopy-injectivity-of-injectives-colocal-example}
and~\ref{homotopy-injectivity-antilocal-example}).
 Corollary~\ref{many-colocal-are-antilocal} provides many such classes.

 On the other hand, a class that is \emph{both local and antilocal}
should be a rare occurrence.
 In fact, one can see that any antilocal class of modules provided by
Theorem~\ref{locality-antilocality-theorem}(b) or~(c), or by
Theorem~\ref{antilocality-colocality-theorem}(b) or~(c), consists of
contraadjusted modules only, and any antilocal class of complexes
provided by one of the same theorems consists of complexes with
contraadjusted terms only.
 But contraadjustedness is not preserved by localizations
(see Example~\ref{cotorsion-not-local-example}).

 The class of injective modules over Noetherian rings is such a rare
example of a local antilocal class (see
Examples~\ref{injective-local-and-non-local-example}
and~\ref{injectivity-strongly-antilocal-example}).
 Another example is the class of all complexes of injective modules
over Noetherian rings, which is local essentially by
Example~\ref{injective-local-and-non-local-example}
(cf.\ Remark~\ref{diagrams-termwise-in-class-remark}) and antilocal by
Example~\ref{termwise-injectivity-antilocal-example}.
\end{rem}

 The following examples of antilocal classes we have not seen yet
in the previous section.

\begin{ex} \label{acyclic-of-contraadjusted-antilocal-example}
 Let $\sE_R=\sK_R^\cta=\bC(R\Modl^\cta)$ be the exact category of
complexes of contraadjusted $R$\+modules.
 Then the cotorsion pair formed by the classes of acyclic complexes
of modules and homotopy injective complexes of injective modules
($\bC_\ac(R\Modl)$, $\bC^\hin(R\Modl^\inj)$) in $\bC(R\Modl)$
(see Example~\ref{homotopy-injectivity-antilocal-example})
restricts to the full subcategory~$\sE_R$.
 In other words, the pair of classes of acyclic complexes of
contraadjusted $R$\+modules $\sA_R=\bC_\ac(R\Modl)\cap\bC(R\Modl^\cta)=
\bC_\ac(R\Modl^\cta)$ and homotopy injective complexes of injective
$R$\+modules $\sB_R=\bC^\hin(R\Modl^\inj)$ is a hereditary complete
cotorsion pair in $\sE_R=\bC(R\Modl^\cta)$.
 This holds because the class $\sE_R$ is closed under extensions and
cokernels of monomorphisms in $\bC(R\Modl)$, and all homotopy injective
complexes of injective $R$\+modules belong to~$\sE_R$
(see Lemmas~\ref{cotorsion-pair-restricts}(b)
and~\ref{restricted-cotorsion-hereditary}).

 Notice that any acyclic complex of contraadjusted modules has
contraadjusted modules of cocycles, because the class of contraadjusted
modules $R\Modl^\cta$ is closed under epimorphic images in $R\Modl$
(see Lemma~\ref{generated-by-projdim1-lemma}).
 Thus a complex of contraadjusted modules is acyclic in $R\Modl$ if
and only if it is acyclic in $R\Modl^\cta$, and the expression
``acyclic complex of contraadjusted modules'' is unambiguous.

 We observe that the class $\sE$ is very colocal essentially by
Example~\ref{contraadjustedness-colocal-example},
and the class $\sB$ is very colocal by
Example~\ref{homotopy-injectivity-of-injectives-colocal-example}.
 Thus the class $\sA$ of acyclic complexes of contraadjusted modules
is antilocal by Theorem~\ref{antilocality-colocality-theorem}(b).
 Alternatively, one could show that the class of acyclic complexes
of contraadjusted modules is very colocal (using
Lemma~\ref{coascent+direct-image-imply-codescent}), and apply
Corollary~\ref{many-colocal-are-antilocal} in order to conclude that
it is also antilocal.
\end{ex}

\begin{ex}
 Let $\sK_R^\cta=\bC(R\Modl^\cta)$ be the exact category of complexes
of contraadjusted $R$\+modules and $\sE_R\subset\sK_R^\cta$ be
the full subcategory of complexes of cotorsion $R$\+modules,
$\sE_R=\bC(R\Modl^\cot)$.
 Then the cotorsion pair formed by the classes of acyclic complexes
of modules and homotopy injective complexes of injective modules
($\bC_\ac(R\Modl)$, $\bC^\hin(R\Modl^\inj)$) in $\bC(R\Modl)$
restricts to the full subcategory~$\sE_R$.
 In other words, the pair of classes of acyclic complexes of
cotorsion $R$\+modules $\sA_R=\bC_\ac(R\Modl)\cap\bC(R\Modl^\cot)=
\bC_\ac(R\Modl^\cot)$ and homotopy injective complexes of injective
$R$\+modules $\sB_R=\bC^\hin(R\Modl^\inj)$ is a hereditary complete
cotorsion pair in $\sE_R=\bC(R\Modl^\cot)$.
 Similarly to the previous
Example~\ref{acyclic-of-contraadjusted-antilocal-example}, this holds
by Lemmas~\ref{cotorsion-pair-restricts}(b)
and~\ref{restricted-cotorsion-hereditary}.

 Notice that any acyclic complex of cotorsion modules has cotorsion
modules of cocycles by~\cite[Theorem~5.1(2)]{BCE}.
 Thus a complex of cotorsion modules is acyclic in $R\Modl$ if
and only if it is acyclic in $R\Modl^\cot$, and the expression
``acyclic complex of cotorsion modules'' is unambiguous.

 We observe that the class $\sE$ is very colocal essentially by
Example~\ref{cotorsion-colocal-example},
and the class $\sB$ is very colocal by
Example~\ref{homotopy-injectivity-of-injectives-colocal-example}.
 Thus the class $\sA$ of acyclic complexes of cotorsion modules
is antilocal by Theorem~\ref{antilocality-colocality-theorem}(b).
 Alternatively, one could show that the class of acyclic complexes
of cotorsion modules is very colocal (see
Example~\ref{homotopy-flat-of-flat-cta-antilocal-example} below),
and apply Corollary~\ref{many-colocal-are-antilocal} in order to
conclude that it is also antilocal.

 As another alternative, antilocality of the class of acyclic complexes
of cotorsion modules is provable using
Proposition~\ref{homotopy-flat-acyclic-cot-cotorsion-pair} below
and Theorem~\ref{locality-antilocality-theorem}(b).
\end{ex}

\begin{ex} \label{coacyclic-of-contraadjusted-antilocal-example}
 Let $\sE_R=\sK_R^\cta=\bC(R\Modl^\cta)$ be the exact category of
complexes of contraadjusted $R$\+modules.
 Then the cotorsion pair formed by the classes of Becker-coacyclic
complexes of modules and all complexes of injective modules
($\bC_\coac(R\Modl)$, $\bC(R\Modl^\inj)$) in $\bC(R\Modl)$
(see Example~\ref{termwise-injectivity-antilocal-example})
restricts to the full subcategory~$\sE_R$.
 In other words, the pair of classes of Becker-coacyclic complexes of
contraadjusted $R$\+modules $\sA_R=\bC_\coac(R\Modl)\cap\bC(R\Modl^\cta)
=\bC_\coac(R\Modl^\cta)$ and all complexes of injective
$R$\+modules $\sB_R=\bC(R\Modl^\inj)$ is a hereditary complete
cotorsion pair in $\sE_R=\bC(R\Modl^\cta)$.
 As usual, this holds because the class $\sE_R$ is closed under
extensions and cokernels of monomorphisms in $\bC(R\Modl)$, and all
complexes of injective $R$\+modules belong to~$\sE_R$
(see Lemmas~\ref{cotorsion-pair-restricts}(b)
and~\ref{restricted-cotorsion-hereditary}).

 Notice that the injective objects of the exact category $R\Modl^\cta$
of contraadjusted $R$\+modules coincide with the injective objects
of the abelian category $R\Modl$.
 Hence the complexes of injective objects in $R\Modl^\cta$ are
the same things as the complexes of injective objects in $R\Modl$.
 In this sense, one can say that a complex of contraadjusted
$R$\+modules is Becker-coacyclic as a complex in $R\Modl$ if and only
if it is Becker-coacyclic as a complex in $R\Modl^\cta$, and
the expression ``Becker-coacyclic complex of contraadjusted modules''
is unambiguous.

 Now both the classes $\sE$ and $\sB$ are very colocal essentially by
Examples~\ref{contraadjustedness-colocal-example}
and~\ref{injectivity-colocal-example}
(cf.\ Remark~\ref{diagrams-termwise-in-class-remark}).
 Thus the class $\sA$ of Becker-coacyclic complexes of
contraadjusted modules is antilocal by
Theorem~\ref{antilocality-colocality-theorem}(b).
\end{ex}

\begin{ex}
 Let $\sK_R^\cta=\bC(R\Modl^\cta)$ be the exact category of complexes
of contraadjusted $R$\+modules and $\sE_R\subset\sK_R^\cta$ be the full
subcategory of complexes of cotorsion $R$\+modules,
$\sE_R=\bC(R\Modl^\cot)$.
 Then, similarly to
Example~\ref{coacyclic-of-contraadjusted-antilocal-example},
the cotorsion pair formed by the classes of Becker-coacyclic complexes
of modules and all complexes of injective modules
($\bC_\coac(R\Modl)$, $\bC(R\Modl^\inj)$) in $\bC(R\Modl)$
restricts to the full subcategory~$\sE_R$.
 In other words, the pair of classes of Becker-coacyclic complexes of
cotorsion $R$\+modules $\sA_R=\bC_\coac(R\Modl)\cap\bC(R\Modl^\cot)
=\bC_\coac(R\Modl^\cot)$ and all complexes of injective
$R$\+modules $\sB_R=\bC(R\Modl^\inj)$ is a hereditary complete
cotorsion pair in $\sE_R=\bC(R\Modl^\cot)$.

 Similarly to
Example~\ref{coacyclic-of-contraadjusted-antilocal-example},
one observes that injective objects of the exact category $R\Modl^\cot$
of cotorsion $R$\+modules coincide with the injective objects of
the abelian category $R\Modl$.
 Therefore, one can say that a complex of cotorsion $R$\+modules is
Becker-coacyclic as a complex in $R\Modl$ if and only if it is
Becker-coacyclic as a complex in $R\Modl^\cot$, and the expression
``Becker-coacyclic complex of cotorsion modules'' is unambiguous.

 Both the classes $\sE$ and $\sB$ are very colocal essentially by
Examples~\ref{cotorsion-colocal-example}
and~\ref{injectivity-colocal-example}.
 The class $\sE_R$ is also (obviously) closed under extensions and
direct summands in $R\Modl^\cta$.
 Thus the class $\sA$ of Becker-coacyclic complexes of cotorsion modules
is antilocal by Theorem~\ref{antilocality-colocality-theorem}(b).
\end{ex}

\begin{ex} \label{flat-acyclic-complexes-of-cta-antilocal-example}
 Let $\sE_R=\sK_R^\cta=\bC(R\Modl^\cta)$ be the exact category of
complexes of contraadjusted $R$\+modules.
 Then the cotorsion pair formed by the classes of acyclic complexes of
flat modules with flat modules of cocycles and all complexes of
cotorsion modules ($\bC_\ac^\bfl(R\Modl_\fl)$, $\bC(R\Modl^\cot)$)
in $\bC(R\Modl)$ (see
Proposition~\ref{very-flat-tilde-termwise-cta-cot-cotorsion-pair}(b))
restricts to the full subcategory~$\sE_R$.
 In other words, the pair of classes of acyclic complexes of flat
contraadjusted $R$\+modules with flat modules of cocycles
$\sA_R=\bC_\ac^\bfl(R\Modl_\fl)\cap\bC(R\Modl^\cta)
=\bC_\ac^\bfl(R\Modl_\fl^\cta)$ and all complexes of cotorsion
$R$\+modules $\sB_R=\bC(R\Modl^\cot)$ is a hereditary complete
cotorsion pair in $\sE_R=\bC(R\Modl^\cta)$.
 Similarly to the previous
Example~\ref{coacyclic-of-contraadjusted-antilocal-example}, this
holds by Lemmas~\ref{cotorsion-pair-restricts}(b)
and~\ref{restricted-cotorsion-hereditary}.

 Now both the classes $\sE$ and $\sB$ are very colocal essentially by
Examples~\ref{contraadjustedness-colocal-example}
and~\ref{cotorsion-colocal-example}
(cf.\ Remark~\ref{diagrams-termwise-in-class-remark}).
 Thus the class $\sA$ of acyclic complexes of flat contraadjusted
modules with flat modules of cocycles is antilocal by
Theorem~\ref{antilocality-colocality-theorem}(b).
\end{ex}

 A complex of $R$\+modules $P^\bu$ is said to be \emph{homotopy
projective} (or ``$K$\+pro\-jec\-tive'')~\cite{Spal} if, for
any acyclic complex of $R$\+modules $Y^\bu$, any morphism of complexes
of $R$\+modules $P^\bu\rarrow Y^\bu$ is homotopic to zero.
 Any complex of $R$\+modules is quasi-isomorphic to a homotopy
projective complex (and even to a homotopy projective complex of
projective $R$\+modules), which is defined uniquely up to
homotopy equivalence.

 A complex of left $R$\+modules $F^\bu$ is said to be \emph{homotopy
flat} (or ``$K$\+flat'')~\cite{Spal} if, for any acyclic complex of
right $R$\+modules $X^\bu$, the complex $X^\bu\ot_R F^\bu$ is acyclic.
 Any homotopy projective complex is homotopy flat.

\begin{prop} \label{homotopy-flat-acyclic-cot-cotorsion-pair}
 For any associative ring $R$, consider the following pair of classes
of complexes of left $R$\+modules.
 The left class\/ $\sA_R=\bC_\hfl(R\Modl_\fl)$ is the class of homotopy
flat complexes of flat $R$\+modules.
 The right class\/ $\sB_R=\bC_\ac(R\Modl^\cot)=\bC_\ac(R\Modl)\cap
\bC(R\Modl^\cot)$ is the class of acyclic complexes of cotorsion
$R$\+modules.
 Then the pair of classes $(\sA_R,\sB_R)$ is a hereditary complete
cotorsion pair in the abelian category of complexes $\bC(R\Modl)$.
\end{prop}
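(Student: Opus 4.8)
The plan is to obtain this proposition by combining Gillespie's \emph{flat model structure} on the category of complexes~\cite{Gil0} with the theorem of Bazzoni, Cort\'es-Izurdiaga and Estrada on acyclic complexes of cotorsion modules, in the same concise manner as the proof of Proposition~\ref{very-flat-tilde-termwise-cta-cot-cotorsion-pair}(b). Gillespie's construction provides, alongside the cotorsion pair used for that proposition, a \emph{complementary} hereditary complete cotorsion pair $(\mathsf{dg}\widetilde{\mathcal F},\widetilde{\mathcal C})$ in $\bC(R\Modl)$, where $\widetilde{\mathcal C}$ is the class of acyclic complexes of $R$\+modules whose modules of cocycles are cotorsion, and $\mathsf{dg}\widetilde{\mathcal F}$ is the class of complexes of flat $R$\+modules $F^\bu$ for which $\Hom_R(F^\bu,C^\bu)$ is acyclic for every $C^\bu\in\widetilde{\mathcal C}$. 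The key point to invoke from~\cite{Gil0} is that $\mathsf{dg}\widetilde{\mathcal F}$ coincides with the class $\bC_\hfl(R\Modl_\fl)$ of homotopy flat complexes of flat $R$\+modules (``DG\+flat $=$ homotopy flat with flat components''); thus $\sA_R=\mathsf{dg}\widetilde{\mathcal F}$. Alternatively, the completeness assertion by itself can be recovered following the template of Proposition~\ref{very-flat-tilde-termwise-cta-cot-cotorsion-pair}: the Grothendieck category $\bC(R\Modl)$ has enough projective objects, namely the contractible complexes of projective modules (which are flat and contractible, hence lie in $\bC_\hfl(R\Modl_\fl)$), the class $\bC_\hfl(R\Modl_\fl)$ is deconstructible in $\bC(R\Modl)$ by~\cite{Gil0}, and Theorem~\ref{eklof-trlifaj-theorem} then applies.

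It then remains to identify the right-hand class $\widetilde{\mathcal C}$ with $\bC_\ac(R\Modl^\cot)=\bC_\ac(R\Modl)\cap\bC(R\Modl^\cot)$. One inclusion is elementary: if $C^\bu$ is an acyclic complex with cotorsion modules of cocycles, then for every $n$ the term $C^n$ fits into a short exact sequence $0\to Z^n\to C^n\to Z^{n+1}\to 0$ with $Z^n$ and $Z^{n+1}$ cotorsion, whence $C^n$ is cotorsion because the class $R\Modl^\cot$ is closed under extensions; so $\widetilde{\mathcal C}\subset\bC_\ac(R\Modl^\cot)$. The opposite inclusion is exactly~\cite[Theorem~5.1(2)]{BCE}, which asserts that every acyclic complex of cotorsion modules has cotorsion modules of cocycles. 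Hence $\sB_R=\widetilde{\mathcal C}=\bC_\ac(R\Modl^\cot)$, and $(\sA_R,\sB_R)$ is the hereditary complete cotorsion pair $(\mathsf{dg}\widetilde{\mathcal F},\widetilde{\mathcal C})$, as desired.

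If completeness has been recovered by the Eklof--Trlifaj route, hereditariness can be verified directly rather than quoted: the class $\bC_\ac(R\Modl^\cot)$ is closed under the cokernels of admissible monomorphisms in $\bC(R\Modl)$. Indeed, given an admissible short exact sequence $0\to C'^\bu\to C^\bu\to C''^\bu\to 0$ in $\bC(R\Modl)$ with $C'^\bu,C^\bu\in\bC_\ac(R\Modl^\cot)$, the long exact cohomology sequence shows $C''^\bu$ is acyclic, while each $C''^n$ is cotorsion since $R\Modl^\cot$ is closed under cokernels of monomorphisms by Lemma~\ref{hereditary-lemma}(a) applied to the hereditary cotorsion pair $(R\Modl_\fl,R\Modl^\cot)$; by the equivalence of conditions~(i)--(iv) stated before Lemma~\ref{hereditary-lemma}, the cotorsion pair is then hereditary. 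The one genuinely nontrivial ingredient is the identification of the left class, $\mathsf{dg}\widetilde{\mathcal F}=\bC_\hfl(R\Modl_\fl)$, from~\cite{Gil0}, together with~\cite[Theorem~5.1(2)]{BCE} on the cocycles of acyclic complexes of cotorsion modules (which, via Lemma~\ref{ext-homotopy-hom-lemma}, is what turns the orthogonality $\Ext^1_{\bC(R\Modl)}(F^\bu,C^\bu)=0$ for $F^\bu\in\bC_\hfl(R\Modl_\fl)$ and $C^\bu\in\bC_\ac(R\Modl^\cot)$ into the vanishing of $\Hom_{\bH(R\Modl)}(F^\bu,C^\bu[m])$); everything else is a routine assembly of complete cotorsion pairs of the kind already used repeatedly in the paper.
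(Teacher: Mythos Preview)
Your proposal is correct and follows essentially the same approach as the paper: both derive the result from Gillespie's flat model structure~\cite{Gil0} combined with~\cite[Theorem~5.1(2)]{BCE} to identify the right class. The paper unpacks the argument a bit more explicitly---working with the generating set $\sS$ of one-term (stalk) complexes of small flat modules, and showing directly that the homotopy flat complexes of flat modules are generated as a triangulated subcategory by homotopy projective complexes of projectives together with $\bC_\ac^\bfl(R\Modl_\fl)$---but the ingredients (and the references, including \cite{Em}) are the same.
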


\begin{proof}
 This is~\cite[Chapter~4]{GR}, \cite[Proposition~4.11 and
Corollary~4.18]{Gil0}, and~\cite[Theorem~5.1(2)]{BCE}
(see also~\cite[Proposition~4.1]{Em}).
 Let us spell out some details, for the reader's benefit.

 First of all, an acyclic complex of flat modules is homotopy flat if
and only if it has flat modules of cocycles.
 Indeed, if $F^\bu$ is an acyclic complex of flat left modules with
flat modules of cocycles and $X^\bu$ is an arbitrary complex of right
modules, then the complex $X^\bu\ot_R F^\bu$ is acyclic, as one can
see by representing $X^\bu$ as a direct limit of bounded complexes of
modules (using the silly truncation on one side and the canonical
truncation on the other side) and reducing the question to the case
when $X^\bu$ is a one-term complex.
 Conversely, given an acyclic complex of left flat modules $F^\bu$
and a right module $X$, choose a projective resolution $P^\bu$ for
$X$ and notice that $P^\bu\ot_R F^\bu$ is an acyclic complex (since
$P^\bu$ is a bounded above complex of flat modules and $F^\bu$ is
an acyclic complex).
 Now if the complex $(P^\bu\to X)\ot_R F^\bu$ is acyclic, then
the complex $X\ot_RF^\bu$ is acyclic, and it follows that the modules
of cocycles of the acyclic complex of flat modules $F^\bu$ are flat.

 Next, let us show that the full triangulated subcategory of homotopy
flat complexes of flat modules in the homotopy category of complexes
$\bH(R\Modl)$ is generated by the homotopy projective complexes of
projective modules and acyclic complexes of flat modules with flat
modules of cocycles.
 Indeed, let $F^\bu$ be a homotopy flat complex of flat modules.
 Choose a homotopy projective complex of projective modules $P^\bu$
together with a quasi-isomorphism $P^\bu\rarrow F^\bu$, and denote
by $G^\bu$ the cone of this morphism.
 Then $G^\bu$ is an acyclic homotopy flat complex of flat modules,
hence its modules of cocycles are flat.

 Let~$\lambda$ be an infinite cardinal greater or equal to
the cardinality of the ring $R$, and let $\sS_0$ be the set of
(representatives of isomorphism classes of all) flat
$R$\+modules of the cardinality~$\le\lambda$.
 Then all flat $R$\+modules are filtered by the modules
from~$\sS_0$\, (see~\cite[Lemma~1 and Proposition~2]{BBE}
or~\cite[Lemma~6.23]{GT}).
 Denote by $\sS$ the class of all one-term complexes obtained by placing
the modules from $\sS_0$ in various cohomological degrees $n\in\boZ$.
 We claim that $(\sA_R,\sB_R)$ is the cotorsion pair in $\bC(R\Modl)$
generated by~$\sS$.

 Indeed, Lemma~\ref{disk-complexes-lemma} together with
Lemma~\ref{eklof-lemma} imply that all complexes from $\sS^{\perp_1}$
are complexes of cotorsion modules.
 Futhermore, all complexes from $\sS^{\perp_1}$ are acyclic, since
the free $R$\+module $R$ belongs to~$\sS$.
 It is helpful to keep Lemma~\ref{ext-homotopy-hom-lemma} in mind.
 On the other hand, in any complex of cotorsion modules, the modules
of cocycles are cotorsion~\cite[Theorem~5.1(2)]{BCE}, hence
any morphism from a complex belonging to $\sS$ to a complex belonging
to $\sB_R$ is homotopic to zero.
 Thus $\sS^{\perp_1}=\sB_R$.
 
 It remains to show that $\sA_R={}^{\perp_1}\sB_R$.
 For any homotopy projective complex $P^\bu$ and any complex $C^\bu
\in\sB_R$, any morphism of complexes $P^\bu\rarrow C^\bu$ is homotopic
to zero, since $C^\bu$ is an acyclic complex.
 For any acyclic complex of flat modules $F^\bu$ with flat modules
of cocycles, and any complex $C^\bu\in\sB_R$, any morphism of
complexes $F^\bu\rarrow C^\bu$ is homotopic to zero, since $C^\bu$
is a complex of cotorsion modules~\cite[Theorem~5.3]{BCE}.
 Hence $\sA_R\subset{}^{\perp_1}\sB_R$.
 On the other hand, ${}^{\perp_1}\sB_R$ is the class of all complexes
filtered by complexes from $\sS$ (essentially by
Theorem~\ref{eklof-trlifaj-theorem}), and all such complexes belong
to $\sA_R$ because the class of homotopy flat complexes of flat
modules is closed under extensions and direct limits.

 Now Theorem~\ref{eklof-trlifaj-theorem} tells that $(\sA_R,\sB_R)$
is a complete cotorsion pair in $\bC(R\Modl)$.
 As the class $\sB_R$ is clearly closed under the cokernels of
monomorphisms in $\bC(R\Modl)$, it follows that this cotorsion pair is
hereditary.
\end{proof}

\begin{ex} \label{homotopy-flat-of-flat-cta-antilocal-example}
 Let $\sE_R=\sK_R^\cta=\bC(R\Modl^\cta)$ be the exact category of
complexes of contraadjusted $R$\+modules.
 Then the cotorsion pair formed by the classes of homotopy flat
complexes of flat modules and acyclic complexes of cotorsion modules
($\bC_\hfl(R\Modl_\fl)$, $\bC_\ac(R\Modl^\cot)$) in $\bC(R\Modl)$ (see
Proposition~\ref{homotopy-flat-acyclic-cot-cotorsion-pair})
restricts to the full subcategory~$\sE_R$.
 In other words, the pair of classes of homotopy flat complexes of
flat contraadjusted $R$\+modules $\sA_R=\bC_\hfl(R\Modl_\fl)\cap
\bC(R\Modl^\cta)=\bC_\hfl(R\Modl_\fl^\cta)$ and acyclic complexes of
cotorsion $R$\+modules $\sB_R=\bC_\ac(R\Modl^\cot)$ is a hereditary
complete cotorsion pair in $\sE_R=\bC(R\Modl^\cta)$.
 Similarly to the previous examples, this holds by
Lemmas~\ref{cotorsion-pair-restricts}(b)
and~\ref{restricted-cotorsion-hereditary}.

 The class $\sE$ is very colocal by
Example~\ref{contraadjustedness-colocal-example}.
 Furthermore, the class $\sB$ of acyclic complexes of cotorsion
modules is very colocal as well.
 It is important here that all acyclic complexes of cotorsion modules
have cotorsion modules of cocycles~\cite[Theorem~5.1(2)]{BCE}.
 Hence one can see that, more generally, the complex
$\Hom_R(S,C^\bu)$ is an acyclic complex of cotorsion $S$\+modules for
any acyclic complex of cotorsion $R$\+modules $C^\bu$ and any
commutative ring homomorphism $R\rarrow S$ making $S$ a flat
$R$\+module.
 The underlying complex of $R$\+modules of any acyclic complex of
cotorsion $S$\+modules is an acyclic complex of cotorsion
$R$\+modules for any ring homomorphism $R\rarrow S$ (cf.\
Example~\ref{cotorsion-colocal-example}).
 Thus the coascent and direct image conditions hold.
 The class $\sB_R$ is clearly closed under cokernels of monomorphisms
in $\bC(R\Modl)$.
 So Lemma~\ref{coascent+direct-image-imply-codescent} tells that
the class $\sB$ satisfies codescent.

 Therefore, we can conclude that the class $\sA$ of homotopy flat
complexes of flat contraadjusted modules is antilocal
by Theorem~\ref{antilocality-colocality-theorem}(b).

 Similarly, one can restrict the same cotorsion pair in $\bC(R\Modl)$
to the exact subcategory of complexes of cotorsion $R$\+modules
$\sE_R=\bC(R\Modl^\cot)\subset\bC(R\Modl^\cta)=\sK_R^\cta$.
 Then Theorem~\ref{antilocality-colocality-theorem}(b) tells that
the class of homotopy flat complexes of flat cotorsion modules is
antilocal.
\end{ex}

 At last, let us suggest an alternative approach to
Example~\ref{termwise-very-flat-contraadjusted-antilocal-example},
and use this also as an occasion to discuss the colocality of
Becker-contraacyclicity.

\begin{ex} \label{termwise-very-flat-cta-antilocal-via-colocality}
 Let $\sE_R=\sK_R^\cta=\bC(R\Modl^\cta)$ be the exact category of
complexes of contraadjusted $R$\+modules.
 Then the cotorsion pair formed by the classes of all complexes of
very flat modules and Becker-contraacyclic complexes of contraadjusted
modules ($\bC(R\Modl_\vfl)$, $\bC^\ctrac(R\Modl^\cta)$) in $\bC(R\Modl)$
(see Proposition~\ref{very-flat-contraacyclic-cotorsion-pair}(a))
restricts to the full subcategory~$\sE_R$.
 In other words, the pair of classes of all complexes of very flat
contraadjusted $R$\+modules $\sA_R=\bC(R\Modl_\vfl)\cap
\bC(R\Modl^\cta)=\bC(R\Modl_\vfl^\cta)$ and Becker-contraacyclic
complexes of contraadjusted $R$\+modules $\sB_R=\bC^\ctrac(R\Modl^\cta)$
is a hereditary complete cotorsion pair in $\sE_R=\bC(R\Modl^\cta)$.
 As usual, this holds by Lemmas~\ref{cotorsion-pair-restricts}(b)
and~\ref{restricted-cotorsion-hereditary}.

 The class $\sE$ is very colocal by
Example~\ref{contraadjustedness-colocal-example}.
 Furthermore, the class $\sB$ of Becker-contraacyclic complexes of
contraadjusted modules is also very colocal.
 Indeed, the class $\sB$ satisfies the coascent and direct image
conditions because the class of all complexes of very flat modules
$\bC(R\Modl_\vfl)$ satisfies the ascent and direct image conditions
(see Example~\ref{very-flatness-local-example} and
Proposition~\ref{left-right-ascent-coascent-direct-image-prop}).
 Since the class $\sB$ is also closed under cokernels of
monomorphisms, the codescent holds for it
by Lemma~\ref{coascent+direct-image-imply-codescent}.
 This is the dual argument to the one in
Example~\ref{termwise-injectivity-antilocal-example}.

 Thus the class $\sA$ of all complexes of very flat contraadjusted
$R$\+modules is antilocal by
Theorem~\ref{antilocality-colocality-theorem}(b).
 We have obtained an alternative proof of the antilocality of
termwise contraadjusted very flatness, based on
Theorem~\ref{antilocality-colocality-theorem}(b) instead of
Theorem~\ref{locality-antilocality-theorem}(b) (cf.\
Example~\ref{termwise-very-flat-contraadjusted-antilocal-example}).

 The antilocality of termwise contraadjusted flatness
(as in Example~\ref{termwise-flat-contraadjusted-antilocal-example})
and of termwise flat cotorsion
(Example~\ref{termwise-flat-cotorsion-antilocal-example}) can be proved
similarly, using Theorem~\ref{antilocality-colocality-theorem}(b) with
Proposition~\ref{very-flat-contraacyclic-cotorsion-pair}(b).
 One takes $\sE_R=\sK_R^\cta=\bC(R\Modl^\cta)$ in the former case and
$\sE_R=\bC(R\Modl^\cot)$ in the latter one.
\end{ex}

\bigskip

\end{document}